\documentclass{article}
\usepackage{amsmath,amssymb,amsthm,graphicx,epsfig,float,url}
\usepackage[colorlinks=true,citecolor={Plum},linkcolor={Periwinkle}]{hyperref}
\usepackage{pdfsync}
\usepackage[usenames, dvipsnames]{xcolor}
\usepackage{tikz}
\usepackage{subfig}
\usepackage{bbm}
\usepackage{stmaryrd}
\usepackage{amssymb}
\usepackage{mathrsfs}  
\usepackage{caption}
\usepackage{float}
\usepackage{esint}

\usepackage{dsfont}
\usepackage[makeroom]{cancel}
\usetikzlibrary{patterns}
\topmargin -1cm
\textheight 21cm
\textwidth 15cm 
\oddsidemargin 1cm

\newcommand{\R}{\textnormal{I\kern-0.21emR}}
\newcommand{\N}{\textnormal{I\kern-0.21emN}}

\newcommand{\A}{\mathcal{A}}

\renewcommand{\geq}{\geqslant}
\renewcommand{\leq}{\leqslant}

\def\B{{\mathbb B}}
\def\A{{\mathbb A}}
\def\e{{\varepsilon}}

\allowdisplaybreaks

\def\YYint#1#2#3{{\setbox0=\hbox{$#1{#2#3}{\iint}$}
    \vcenter{\hbox{$#2#3$}}\kern-.51\wd0}}
 

\usepackage[dvipsnames]{xcolor}

\newtheorem*{theorem*}{Theorem}

\newtheorem{theorem}{Theorem}

\newtheorem{material}{material}
\newtheorem{proposition}[material]{Proposition}

\newtheorem{definition}[material]{Definition}
\newtheorem{lemma}[material]{Lemma}
\newtheorem{claim}[material]{Claim}

\newtheorem{remark}[material]{Remark}

\def\O{{\Omega}}
\def\n{{\nabla}}
\def\p{{\varphi}}
\def\hut{{\hat u_\tau'}}
\def\OT{{(0;T)\times \O}}
\def\op{{\overline \p}}

\usepackage{xargs}
 \usepackage[colorinlistoftodos,textsize=small]{todonotes}
 \newcommandx{\christian}[2][1=]{\todo[linecolor=red,backgroundcolor=red!25,bordercolor=red,#1]{#2}}
 \newcommandx{\laura}[2][1=]{\todo[linecolor=blue,backgroundcolor=blue!25,bordercolor=blue,#1]{#2}}
 \newcommandx{\info}[2][1=]{\todo[linecolor=green,backgroundcolor=green!25,bordercolor=green,#1]{#2}}
 \newcommandx{\improvement}[2][1=]{\todo[linecolor=yellow,backgroundcolor=yellow!25,bordercolor=yellow,#1]{#2}}
 
  \newcommandx{\biblio}[2][1=]{\todo[linecolor=blue,backgroundcolor=magenta!25,bordercolor=blue,#1]{#2}}

 \numberwithin{equation}{section}

\begin{document}
\title{Quantitative estimates for parabolic optimal control problems under $L^\infty$ and $L^1$ constraints in the ball:\\Quantifying parabolic isoperimetric inequalities}


\author{Idriss Mazari\footnote{Technische Universit\"{a}t Wien, Institute of Analysis and Scientific Computing, 8-10 Wiedner Haupstrasse, 1040 Wien (\texttt{idriss.mazari@tuwien.ac.at})}}
\date{\today}

\maketitle

\begin{abstract} In this article, we present two different approaches for obtaining quantitative inequalities in the context of parabolic optimal control problems. Our model consists of a linearly controlled heat equation with Dirichlet boundary condition $(u_f)_t-\Delta u_f=f$, $f$ being the control. We seek to maximise the functional $\mathcal J_T(f):=\frac12\iint_\OT u_f^2$ or, for some $\e>0$\,, $\mathcal J_T^\e(f):=\frac12\iint_\OT u_f^2+\e\int_\O u_f^2(T,\cdot)$ and to obtain quantitative estimates for these maximisation problems. We offer two approaches in the case where the domain $\O$ is a ball. In that case, if $f$ satisfies $L^1$ and $L^\infty$ constraints and does not depend on time, we propose a shape derivative approach that shows that, for any competitor $f=f(x)$ satisfying the same constraints, we have $\mathcal J_T(f^*)-\mathcal J_T(f)\gtrsim \Vert f-f^*\Vert_{L^1(\O)}^2$, $f^*$ being the maximiser. Through our proof of this time-independent case, we also show how to obtain coercivity norms for shape hessians in such parabolic optimisation problems. We also consider the case where $f=f(t,x)$ satisfies a global $L^\infty$ constraint and, for every $t\in (0;T)$, an $L^1$ constraint. In this case, assuming $\e>0$, we prove an estimate of the form $\mathcal J_T^\e(f^*)-\mathcal J_T^\e(f)\gtrsim\int_0^T a_\e(t) \Vert f(t,\cdot)-f^*(t,\cdot)\Vert_{L^1(\O)}^2$ where $a_\e(t)>0$ for any $t\in (0;T)$. The proof of this result relies on a uniform bathtub principle.
\end{abstract}

\noindent\textbf{Keywords:} Shape optimisation, Optimal control, Parabolic PDEs, Quantitative inequalities.

\medskip

\noindent\textbf{AMS classification:} 49J15, 49Q10.

\paragraph{Acknowledgment.}I Mazari was supported by the Austrian Science Fund (FWF) through the grant I4052-N32 . I. Mazari was partially supported by the French ANR Project ANR-18-CE40-0013 - SHAPO on Shape Optimization and by the Project "Analysis and simulation of optimal shapes - application to lifesciences" of the Paris City Hall.


\section{Introduction}
This Introduction is structured as follows: 
 in Subsection \ref{Se:Scope}, we present the scope of our article; in Subsection \ref{Se:Goal}, we give an informal statement of our results while in Subsection \ref{Se:Biblio} we give several bibliographical references on qualitative properties for optimal control problems, shape derivatives for parabolic problems and quantitative inequalities.
 In Subsection \ref{Se:Schwarz}, we give basic information regarding the Schwarz rearrangement, which will be a key tool in our analysis, and we give bibliographical references for parabolic isoperimetric inequalities.
 In Subsection \ref{Se:Results}, we state our main results, Theorems \ref{Th:Td} and \ref{Th:Ti} (Theorem \ref{Th:Uniqueness} deals with the uniqueness of solutions to our optimal control problem and is also stated in this Section).
 In Subsection \ref{Se:Plan} we present the plan of our paper and, finally, in Subsection \ref{Su:Notation}, we gather the notations we will use throughout the paper.

\subsection{Scope of the article}\label{Se:Scope}

\subsubsection{Goal of this article: informal statement of the problems and of the results}\label{Se:Goal}

In this article, our goal is to present two different approaches for obtaining \textit{quantitative inequalities for optimal control problems}, which will also be dubbed \emph{quantitative isoperimetric parabolic inequalities}. Before explaining how this fits in the growing field of qualitative questions in optimal control theory, let us vaguely state the type of results we wish to establish, and sketch the two approaches that will be put forth. By quantitative inequalities, we mean the following: we consider a controlled parabolic partial differential equation assuming the general form
\begin{equation}\label{Eq:Intro}
u_t-\mathcal Lu=f \text{ in }(0;T)\times \O,
\end{equation}
$\mathcal L$ being an elliptic operator; this equation is 
supplemented with some initial condition and some boundary conditions. In this setting, $f$ is the control and depends \textit{a priori} both on time and space. It is assumed to satisfy some constraints, which will be taken into account by assuming that $f\in \mathscr X$, where $\mathscr X$ is some subset of a function space. The cost to be optimised is some functional $\mathcal J_T:\mathscr X \ni f\mapsto \mathcal J_T(f)$. The control problems reads
\begin{equation}\label{Eq:PvIntro}
\fbox{$\displaystyle \max_{f\in \mathscr X}\mathcal J_T (f).$}\end{equation} 

The quantitative inequality we aim at can take two different forms:
\begin{itemize}
\item \textbf{For time independent controls.} In the context where all controls $f\in \mathscr X$ write $f=f(x)$, and if the solution of \eqref{Eq:PvIntro}  is some $\overline f$ (assumed to be unique for simplicity), the goal is to establish the following kind of estimate
\begin{equation}\label{Eq:IntroIndep}\fbox{$\displaystyle \forall f \in \mathscr X\,, 
\mathcal J_T(f)-\mathcal J_T(\overline f)\leq - C(T)\Vert f-\overline f\Vert_{L^1(\O)}^2$}\end{equation} for some constant $C(T)>0$.
The right-hand side quantity is natural in the context of quantitative inequalities for shape optimisation problems \cite{FuscoMaggiPratelli} and optimal control problems \cite{MazariQuantitative}, and is akin to the Fraenkel asymmetry. We refer to Subsection \ref{Se:Biblio}.
\item \textbf{For time-dependent controls.} In the context where the controls are time dependent i.e. $f=f(t,x)$ and when the solution of \eqref{Eq:PvIntro} is some $f^*$, the goal is to establish something of the form
\begin{equation}\label{Eq:IntroDep}\fbox{$\displaystyle 
\forall f\in \mathscr X\,, 
\mathcal J_T(f)-\mathcal J_T(f^*)\leq- \int_0^T \omega(s)\Vert f(s,\cdot)-f^*(s,\cdot)\Vert_{L^1(\O)}^2$}
\end{equation}
for a function $\omega:[0;T]\rightarrow \R_+$ such that for any $s\in (0;T)$, $\omega(s)>0$. As will be explained more in detail in Subsection \ref{Se:Biblio} and commented upon in the Conclusion, see Section \ref{Cl:Time}, this is a stronger norm than the usual one.

\end{itemize}
To the best of our knowledge, neither type of quantitative estimates have been derived despite their natural interest.

Obviously, one can not expect to prove \eqref{Eq:IntroIndep} or \eqref{Eq:IntroDep} for all optimal control problems. What we propose here is to \textit{establish both these inequalities for a linearly controlled heat equation in the ball under $L^1$ and $L^\infty$ constraints.}  The main equation under consideration is set in $\O=\mathbb B(0;R)$ and writes
\begin{equation}
\label{Eq:MainIntro}
\begin{cases}
\frac{\partial u_f}{\partial t}-\Delta u_f=f\text{ in }\OT\,, 
\\ u_f(t=0)=u^0\geq 0\text{ in }\O\,, 
\\u_f(t,\cdot)=0\text{ on }\partial \O.\end{cases}\end{equation} We will also assume that the initial condition $u^0\in \mathscr C^{2}(\O)\cap W^{1,2}_0(\O)$, $u^0\geq 0$, which is fixed, is radially symmetric and non-increasing. In the time-independent case (when $f=f(x)$), the functional we seek to maximise is defined by 
\begin{equation}
\mathcal J_T(f):=\frac12\iint_\OT u_f^2(t,x)dxdt.
\end{equation}
In the time dependent case (when $f=f(t,x)$), the functional we seek to maximise is
\begin{equation}
\mathcal J^\e_T(f):=\frac12\iint_\OT u_f^2(t,x)dxdt+\frac\e2 \int_\O u_f^2(T,x)dx
\end{equation}
for some $\e>0$. The main reason behind supplementing the functional with a final time term is to ensure the non-degeneracy of the switch function associated to the optimisation problem.

As a final comment, let us remark that the constant $C(T)$ appearing in \eqref{Eq:IntroIndep} and the weight $\omega$ are constructed in a non-explicit way.

\begin{remark}
Although we prove our results for maximisation of functionals, we believe the same strategies work for the minimisation of the functional. For both problems, both inequalities may have interesting consequences for inverse problems.\end{remark}

\begin{remark}
Obviously, if the optimal control $f^*$ for the time-dependent case does not depend on time, which will be the case here, \eqref{Eq:IntroDep} implies \eqref{Eq:IntroIndep} with $C(T)=\int_0^T \omega(s)ds.$ However, the reason why we present two proofs is the possibility of generalising the methods used to prove \eqref{Eq:IntroIndep}. In the conclusion, we explain why we believe this inequality can be extended to other types of control problems, such as bilinear control problems, or how, in general domains, technical assumptions on second order shape derivatives may enable one to derive it.  In short, the proof of \eqref{Eq:IntroIndep} relies on two properties of the control problem: the first one is shape derivatives, for which the trickiest part is to prove coercivity of second order derivatives in general domains; the second one is the convexity of the problem, which is something extremely general.

The proof of \eqref{Eq:IntroDep} is specific to the case of the ball and it is unclear whether or not it may be adapted to other domains. Indeed, the parabolic isoperimetric inequalities used in its proof \cite{Alvino1990,Bandle,RakotosonMossino} may not hold in general domains, in the sense that explicit characterisation of maximisers may not be attainable.\end{remark}
\subsection{Statement of the main results}\label{Se:Results}
Let $\O=\mathbb B(0;R)$ be a centred ball in dimension $n$. We assume that we are given an initial condition satisfying 

\begin{equation}\label{Eq:A0}
u^0\geq 0\,, u^0\in \mathscr C^2(\O)\cap W^{1,2}_0(\O)\,, u^0\text{ is radially symmetric and non-increasing}.\end{equation} For a function $f\in L^2((0;T)\times \O)$ we consider the solution $u_f$ of
\begin{equation}
\label{Eq:Main}
\begin{cases}
\frac{\partial u_f}{\partial t}-\Delta u_f=f\text{ in }(0;T)\times  \O\,, 
\\ u_f(t=0)=u^0\,, 
\\u_f(t,\cdot)=0 \text{ in }(0;T)\times\partial  \O.\end{cases}\end{equation}
The functional we wish to optimise in the time-independent case is 
\begin{equation}\tag{$J$}\label{Eq:Fonc}
\mathcal J_T(f):=\frac12\iint_\OT u^2_f(T,\cdot).\end{equation} For a given $L^1$ constraint $V_0\in (0;\operatorname{Vol}(\O))$, the sets of admissible controls are 
\begin{equation}\label{Eq:AdmTI}\tag{$\bold{\overline{Adm}}$}
\overline{ \mathcal M}(\O):=\left\{f\in L^\infty(\O)\,, 0\leq f\leq 1\,, \int_\O f=V_0\right\}\end{equation} and

\begin{equation}\label{Eq:AdmTD}\tag{$\bold{Adm}_T$}
\mathcal M_T(\O):=\left\{f \in L^\infty\left((0;T)\times \O\right) \text{ for a.e. $t\in (0;T)$, } f(t,\cdot) \in \overline{\mathcal M}(\O).\right\}\end{equation}
The first problem we address is 
\begin{equation}\label{Eq:PvTi}\tag{$\bold I_1$}
\fbox{$\displaystyle \max_{f \in \overline{\mathcal M}(\O)}\mathcal J_T(f).$}\end{equation}
The second problem is set in the time-dependent case and the functional we seek to optimise is, for some $\e> 0$, 
\begin{equation}\mathcal J^\e_T(f):=\frac12\iint_\OT u^2_f(T,\cdot)+\frac\e2\int_\O u_f^2(T,\cdot).\end{equation}
The second problem we address is
\begin{equation}\label{Eq:PvTd}\tag{$\bold I_2^\e$}
\fbox{$\displaystyle \max_{f \in {\mathcal M_T}(\O)}\mathcal J_T^\e(f).$}\end{equation}
Finally, we fix throughout the paper the notation 
\begin{equation}
f^*:=\mathds 1_{\B^*}
\end{equation}
 where $\mathbb B^*$ is the unique centred ball of volume $V_0$, and define $u^*$ as the solution of \eqref{Eq:Main} associated with $f\equiv f^*$.
 \begin{remark}
The existence of solutions of \eqref{Eq:PvTi} and \eqref{Eq:PvTd} are easy consequence of the direct methods of the calculus of variations.\end{remark}
As an easy corollary of \cite[Theorem 3]{Alvino1990}, $f^*$ is a maximiser of both \eqref{Eq:PvTi} and \eqref{Eq:PvTd}. To prove our results, the first step is the following Theorem:

\begin{theorem} \label{Th:Uniqueness}$f^*$ is the unique solution of \eqref{Eq:PvTi} and of \eqref{Eq:PvTd}.\end{theorem}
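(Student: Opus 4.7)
The plan is to combine strict convexity of the functionals in $f$ with the sharp form of the parabolic Talenti comparison underlying \cite[Theorem 3]{Alvino1990}.

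The first step would be to show $\mathcal J_T$ and $\mathcal J_T^\e$ are \emph{strictly convex} in $f$. Writing $u_f = Lf + w$, where $L$ is the linear source-to-solution map for \eqref{Eq:Main} with zero initial data and $w$ is the solution of the homogeneous Cauchy problem with datum $u^0$, both functionals become quadratic in $f$ with quadratic parts $\tfrac12 \iint_{\OT} (Lf)^2$ and $\tfrac12 \iint_{\OT}(Lf)^2 + \tfrac\e2\int_\O (Lf)^2(T,\cdot)$ respectively. Injectivity of $L$ is immediate (if $Lg \equiv 0$ then $g = \partial_t(Lg) - \Delta(Lg) = 0$), so these quadratic parts are positive definite and the functionals are strictly convex.

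Next I would invoke the standard convex-analytic principle that any maximiser of a strictly convex functional over a convex set must be an extreme point of the set: otherwise a non-trivial decomposition $f = \lambda g + (1-\lambda) h$ with $g \neq h$ admissible would yield $\mathcal J_T(f) < \lambda \mathcal J_T(g) + (1-\lambda) \mathcal J_T(h)$, forcing one of $g,h$ to strictly beat $f$ and contradicting maximality. The extreme points of $\overline{\mathcal M}(\O)$ are exactly the characteristic functions $\mathds 1_E$ with $|E|=V_0$, and those of $\mathcal M_T(\O)$ are the functions whose time-slices are such characteristic functions. Hence any maximiser of \eqref{Eq:PvTi} has the form $\mathds 1_E$ with $|E|=V_0$, and any maximiser of \eqref{Eq:PvTd} has time-slices $\mathds 1_{F(t)}$ with $|F(t)|=V_0$ for a.e. $t$.

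Finally, for such extreme maximisers the Schwarz rearrangement of every time-slice in $x$ is exactly $\mathds 1_{\B^*} = f^*$, so \cite[Theorem 3]{Alvino1990} supplies the pointwise comparison $u_f(t,\cdot)^\#(x) \leq u^*(t,x)$ on $(0;T)\times\O$. Maximality of $f$ saturates the induced inequality $\mathcal J_T(f) \leq \mathcal J_T(f^*)$ (resp.\ $\mathcal J_T^\e(f) \leq \mathcal J_T^\e(f^*)$), and the equality case of Alvino's parabolic comparison should force $u_f \equiv u^*$, whence $f = f^*$ almost everywhere by subtracting the two PDEs. The main obstacle is precisely this rigidity step: while the forward comparison is exactly \cite[Theorem 3]{Alvino1990}, characterising its equality case calls for a Brothers--Ziemer-type analysis adapted to the parabolic setting, to be sourced from refinements such as \cite{RakotosonMossino} or obtained by sharpening the isoperimetric step entering Alvino's proof.
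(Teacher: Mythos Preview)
Your outline is the same as the paper's: reduce to bang-bang via strict convexity, invoke the parabolic comparison to get $u_f^\#\prec u^*$ (the paper uses the $\prec$ relation rather than a pointwise bound), saturate by maximality, and then argue rigidity. You are also right that the rigidity step is the crux. However, this step is not available as a black-box citation from \cite{Alvino1990} or \cite{RakotosonMossino}; the paper works it out in detail, and this is where essentially all the content lies.

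Concretely, the paper proceeds in three stages once $f$ is assumed bang-bang. First, from $u^\#(t,\cdot)\prec u^*(t,\cdot)$ and maximality one gets $\int_\O (u^\#)^2(t,\cdot)=\int_\O (u^*)^2(t,\cdot)$ for a.e.\ $t$; the paper then uses the convex-geometric fact that the set $\mathscr K(u^*)=\{g:g\prec u^*\}$ has extreme points exactly $\{g:g^\#=u^*\}$ to upgrade this to $u^\#(t,\cdot)=u^*(t,\cdot)$. Second, knowing $u^\#=u^*$ a.e.\ in $t$, the paper revisits the Rakotoson--Mossino derivation of the comparison (co-area, isoperimetric inequality, Hardy--Littlewood) and shows that every inequality in that chain must be an equality, forcing all level sets $\{u(t,\cdot)>\tau\}$ to be balls. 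Third, and this is the step your sketch omits entirely, one still needs the balls to be \emph{centred}: the paper introduces two auxiliary backward heat solutions with radially decreasing sources ($-1$ and the first Dirichlet eigenfunction $-\phi_1$) and uses Hardy--Littlewood with these test functions to pin the centres at the origin, yielding $u=u^\#=u^*$ and hence $f=f^*$.

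So your plan is sound, but a complete proof requires filling in precisely the part you flagged as ``the main obstacle'', and that filling is substantial rather than a citation.
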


It is not the main goal of this paper, but the result is in itself interesting and relies on topological properties of some classes of functions defined \emph{via} rearrangements. We refer to Section \ref{Se:Uniq}  for the proof.

Let us now pass to the two main results of this article. We choose to state first the time-dependent case, as the result holds without any restriction on the dimension. In this case, we need to take $\e>0$. The reason behind this is technical, and amounts, to put it shortly, to forcing the switch function of the problem to be non-degenerate. We comment on this in the Conclusion.
 \def\S{{\mathbb S}}
 \begin{theorem}\label{Th:Td}

If $\e>0$, there exists a constant $C(\e,T)>0$ such that 

\begin{equation}\label{Eq:DpValTd}
\forall f \in \mathcal M_T(\O)\,, \mathcal J_T^\e(f)-\mathcal J_T^\e(f^*)\leq- C(\e,T)\int_0^T\left(-\frac{\partial p_\e^*}{\partial \nu}\right)_{\partial \B^*}\Vert f(t,\cdot)-f^*\Vert_{L^1(\O)}^2.\end{equation}
In the estimate above $p_\e^*$ solves 
\begin{equation}
\begin{cases}
\frac{\partial p_\e^*}{\partial t}+\Delta p_\e^*=-u^*\text{ in }\OT\,, 
\\ p_\e^*(T,\cdot)=\e u^*(T,\cdot)\,, 
\\ p_\e^*(t,\cdot)=0\text{ on }(0;T)\times \partial \O
\end{cases}
\end{equation}
and is the switch function of \eqref{Eq:PvTd}.

 \end{theorem}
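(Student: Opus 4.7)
The plan is to exploit the quadratic structure of $\mathcal J_T^\e$ together with a time-uniform quantitative bathtub principle for the switch function $p_\e^*$. Setting $g:=f-f^*$ and $v_g:=u_f-u^*$, which solves the heat equation with homogeneous initial/boundary data and source $g$, the affine dependence $u_f=u^*+v_g$ yields the exact expansion
\begin{equation*}
\mathcal J_T^\e(f)-\mathcal J_T^\e(f^*)\;=\;\iint_\OT p_\e^*\,g\;+\;\frac12\iint_\OT v_g^2\;+\;\frac{\e}{2}\int_\O v_g^2(T,\cdot),
\end{equation*}
the linear term being identified with $\iint p_\e^* g$ by integrating by parts against the adjoint PDE for $p_\e^*$. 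The non-negative quadratic remainder $B/2:=\tfrac12\iint v_g^2+\tfrac\e2\int v_g^2(T,\cdot)$ works \emph{against} the desired estimate and is the technical heart of the matter.

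The next step is to describe $p_\e^*$ precisely. Since $u^*(t,\cdot)$ is radially symmetric and non-increasing for every $t\in[0,T]$ (by parabolic Schwarz rearrangement applied to the state equation, \cite{Alvino1990}), the adjoint equation read forward in the time variable $s=T-t$ becomes a heat equation with radial non-increasing initial datum $\e u^*(T,\cdot)$ and radial non-increasing source $u^*$; the same rearrangement argument gives that $p_\e^*(t,\cdot)$ is radial and non-increasing for every $t\in[0,T]$, and its $V_0$-super-level set is exactly $\B^*$. The assumption $\e>0$ makes the terminal datum strictly positive near $\partial\B^*$, so that a standard Hopf/strong maximum principle argument yields $\omega(t):=(-\partial p_\e^*/\partial\nu)_{\partial\B^*}>0$ for every $t\in[0,T]$; it is precisely this non-degeneracy that fails when $\e=0$.

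The core analytic input is a \emph{uniform} (in $t$) quantitative bathtub principle: for any $C^1$ radial non-increasing function $\varphi$ on $\O$ whose $V_0$-super-level set is $\B^*$ with normal derivative $-m$ on $\partial\B^*$, the elementary Taylor estimate $\varphi-\varphi|_{\partial\B^*}\geq m\,\operatorname{dist}(\cdot,\partial\B^*)$ near $\partial\B^*$, combined with a geometric lower bound on integrals of $\operatorname{dist}$ over thin symmetric-difference sets, gives
\begin{equation*}
\int_\O\varphi(f^*-h)\;\geq\;\kappa\,m\,\|h-f^*\|_{L^1(\O)}^2\qquad\forall\,h\in\overline{\mathcal M}(\O),
\end{equation*}
with $\kappa=\kappa(\O,V_0)$ purely geometric. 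Applying this pointwise with $\varphi=p_\e^*(t,\cdot)$ and integrating in $t$ yields
\begin{equation*}
-\iint_\OT p_\e^*\,g\;\geq\;\kappa\,G,\qquad G:=\int_0^T\omega(t)\,\|f(t,\cdot)-f^*\|_{L^1(\O)}^2\,dt.
\end{equation*}

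The main obstacle is then to absorb the quadratic remainder $B/2\geq 0$. Testing optimality along $f^*+\lambda g$ for $\lambda\in[0,1]$ yields only the algebraic inequality $B\leq 2(-\iint p_\e^* g)$, which by itself gives nothing better than $\mathcal J_T^\e(f)\leq\mathcal J_T^\e(f^*)$. The crucial upgrade I would aim for is a strict version $B\leq(2-\delta)(-\iint p_\e^* g)$ with $\delta=\delta(\e,T)>0$ uniform in $f$, obtained through a refined parabolic energy/smoothing estimate on $v_g$ exploiting (i) the $L^\infty$ bound $|g|\leq 1$, (ii) the mean-zero property $\int_\O g(t,\cdot)=0$ together with the ``dipole'' sign constraint $g\leq 0$ on $\B^*$ and $g\geq 0$ on $\O\setminus\B^*$, which forces the cancelling modes of $g$ to dissipate fast under the heat flow, and (iii) the uniform positivity of $\omega$ on $[0,T]$ granted by $\e>0$. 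Once such a gap $\delta$ is secured, combining with the uniform bathtub bound gives
\begin{equation*}
\mathcal J_T^\e(f)-\mathcal J_T^\e(f^*)\;=\;\iint p_\e^* g+\frac{B}{2}\;\leq\;-\frac{\delta}{2}\,(-\iint p_\e^* g)\;\leq\;-\frac{\kappa\delta}{2}\,G,
\end{equation*}
which is the claimed inequality with $C(\e,T):=\kappa\delta/2$.
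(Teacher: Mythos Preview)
Your decomposition
\[
\mathcal J_T^\e(f)-\mathcal J_T^\e(f^*)\;=\;\iint_\OT p_\e^*\,g\;+\;\frac{B}{2},\qquad B:=\iint_\OT v_g^2+\e\int_\O v_g^2(T,\cdot)\geq 0,
\]
is correct, and so is the time-uniform bathtub bound $-\iint p_\e^*\,g\geq \kappa G$. The gap is the ``strict gap'' inequality $B\leq (2-\delta)(-\iint p_\e^*\,g)$ with $\delta>0$ uniform in $f$. You do not prove it: you only list the sign structure of $g$ and invoke an unspecified ``refined parabolic energy/smoothing estimate''. Optimality gives only $B\leq 2(-\iint p_\e^*\,g)$, and upgrading this to a uniform strict gap is essentially the content of the theorem. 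There is no direct energy estimate tying $B$ to the linear term: $B$ is governed by an $L^2$-type norm of $v_g$ (hence an $H^{-1}$-type norm of $g$), while $-\iint p_\e^*\,g$ measures, after subtracting the constant on $\partial\B^*$, a weighted $L^1$-quantity concentrated near $\partial\B^*$. These live on different scales and the heuristic ``dipole modes dissipate fast'' does not by itself produce the required uniform gap $\delta(\e,T)>0$.

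The paper avoids this difficulty altogether by \emph{not} expanding at $f^*$. Instead, one applies the mean value theorem to $\tau\mapsto \mathcal J_T^\e\big(f^*+\tau(f-f^*)\big)$ to write
\[
\mathcal J_T^\e(f)-\mathcal J_T^\e(f^*)=\iint_\OT p_{\e,\xi}\,(f-f^*)
\]
for some $\xi\in(0,1)$, where $p_{\e,\xi}$ is the adjoint associated with the intermediate control $f^*+\xi(f-f^*)$. There is no remainder to absorb. One then argues by contradiction and compactness: if the estimate failed along a sequence $\{f_k\}$, uniqueness of the maximiser forces $f_k\to f^*$ strongly in $L^1$, hence $u_{f_k}\to u^*$ in $\mathscr C^{0,\alpha}$ and $p_{\e,\xi_k}\to p_\e^*$ in $\mathscr C^{1,\alpha}$. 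For $k$ large, the whole family $\{p_{\e,f^*+\tau(f_k-f^*)}(t,\cdot)\}_{t\in[0,T],\,\tau\in[0,1]}$ inherits from $p_\e^*$ the radial monotonicity and the uniform non-degeneracy $-\partial_r p\big|_{r^*}\geq \tfrac12\omega(t)>0$ (this is where $\e>0$ is used, via the Hopf-type argument you correctly identify). The uniform bathtub principle then applies directly to $p_{\e,\xi_k}$, giving the contradiction. The point is that the quadratic remainder disappears once you evaluate the adjoint at the right intermediate state rather than at $f^*$.
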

 \begin{remark}
 Actually, when $\e>0$, we can even prove that there exists a constant $A(\e,T)>0$ such that
 \begin{equation}
 \forall t \in (0;T)\,, \left(-\frac{\partial p_\e^*}{\partial \nu}\right)_{\partial \S^*}\leq -A(\e,T).
 \end{equation}
 We however choose to keep the partial derivative of the switch function as it seems to us to be a more precise result.
 \end{remark}
 We then pass to the time-independent case, where the main innovation will be the use of shape derivatives. We include this result not only for the sake of completeness but also because this method seems, at this stage, generalisable to other domains, while it may not be the case for Theorem \ref{Th:Td}.
 
 \begin{theorem}\label{Th:Ti} Assume $n=2$.
 
 For any $T>0$ there exists a constant $C(T)>0$ such that 
 \begin{equation}\label{Eq:DpValTi}
 \forall f \in \overline{\mathcal M}(\O)\,, \mathcal J_T(f)-\mathcal J_T(f^*)\leq -C(T)\Vert f-f^*\Vert_{L^1(\O)}^2.
 \end{equation}
 \end{theorem}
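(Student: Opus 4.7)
The plan is to exploit the quadratic structure of $\mathcal J_T$ through the exact expansion
\[
\mathcal J_T(f)-\mathcal J_T(f^*)=\int_\O h\,P^*\,dx+\tfrac12\iint_\OT u_h^2\,dx\,dt,\qquad h:=f-f^*,
\]
where $u_h$ solves \eqref{Eq:Main} with source $h$ and zero initial datum, and $P^*(x):=\int_0^T p^*(t,x)\,dt$ is the time-integrated adjoint, $p^*$ solving $-\partial_t p^*-\Delta p^*=u^*$ with $p^*(T,\cdot)=0$ and $p^*|_{\partial\O}=0$. Schwarz symmetry of $u^0$ and $f^*$ combined with the parabolic maximum principle make $P^*$ radial and non-increasing; the Pontryagin condition gives $P^*=c$ on $\partial\B^*$ with $P^*>c$ inside and $P^*<c$ outside, and Hopf's lemma yields $(-\partial_\nu P^*)|_{\partial\B^*}>0$. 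Admissibility of $f$ forces $h\leq 0$ on $\B^*$, $h\geq 0$ on $\O\setminus\B^*$ and $\int_\O h=0$, so the first-order term rewrites as $-\int_\O|h|\,|P^*-c|\,dx\leq 0$ while the second is $\geq 0$.

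I would then split the estimate into two regimes. When $\|h\|_{L^1(\O)}$ is bounded away from zero, a compactness-contradiction argument combining weak-$L^2$ compactness of $\overline{\mathcal M}(\O)$, continuity of $\mathcal J_T$, and uniqueness of $f^*$ (Theorem \ref{Th:Uniqueness}) yields a uniform gap $\mathcal J_T(f^*)-\mathcal J_T(f)\geq\epsilon_0>0$, which trivially implies the inequality. The delicate regime is small $\|h\|_{L^1(\O)}$. Here convexity of $\mathcal J_T$ together with the bang-bang nature of $f^*$ allows me to reduce to $f=\mathds 1_E$ with $|E\triangle\B^*|$ small, and I parameterise such $E$ as a normal graph $\partial E_\phi=\{r=R^*+\phi(\theta)\}$ over $\partial\B^*$. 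The map $F(\phi):=\mathcal J_T(\mathds 1_{E_\phi})$ is twice differentiable at $\phi=0$ with $F'(0)=0$ by optimality; standard shape calculus applied to the exact expansion above gives (after accounting for the volume constraint via a Lagrange multiplier equal to $c$, which cancels the mean-curvature contribution)
\[
F''(0)[\phi,\phi]=-\bigl(-\partial_\nu P^*\bigr)|_{\partial\B^*}\int_{\partial\B^*}\phi^2\,d\sigma+\mathcal Q(\phi),
\]
where $\mathcal Q(\phi):=\iint_\OT u_{\dot h_\phi}^2\,dx\,dt\geq 0$ is the non-local bilinear contribution coming from the boundary-concentrated first variation $\dot h_\phi=\phi\,d\sigma_{\partial\B^*}$.

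The heart of the proof, and the source of the dimensional restriction $n=2$, is the \emph{coercivity of the shape Hessian}: proving $-F''(0)[\phi,\phi]\geq c(T)\|\phi\|_{L^2(\partial\B^*)}^2$ for some $c(T)>0$ on mean-zero $\phi$. Once obtained, Cauchy--Schwarz yields $\|\phi\|_{L^2(\partial\B^*)}^2\geq|\partial\B^*|^{-1}\|\phi\|_{L^1(\partial\B^*)}^2$, which is comparable to $|E_\phi\triangle\B^*|^2$ and hence to $\|h\|_{L^1(\O)}^2$, and Taylor's formula then closes the small regime. The task reduces to dominating $\mathcal Q$ by a strictly smaller multiple of $(-\partial_\nu P^*)\|\phi\|_{L^2}^2$. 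In dimension two, since $u^*$ and $p^*$ are radial, the operator underlying $\mathcal Q$ commutes with rotations and diagonalises in the Fourier basis $\{e^{ik\theta}\}$ on $\partial\B^*=\mathbb S^1$: coercivity reduces to a mode-by-mode check that the $k$-th eigenvalue $\mathcal Q_k$ is strictly less than $(-\partial_\nu P^*)|\partial\B^*|$. Uniqueness (Theorem \ref{Th:Uniqueness}) guarantees a strict inequality for each fixed mode, while the angular-momentum potential $k^2/r^2$ appearing in the radial heat equation produces an exponential decay $\mathcal Q_k\lesssim e^{-ck^2T/R^{*2}}$ at high modes, so the infimum of the gaps is strictly positive. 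Extending this analysis to $n\geq 3$ would require a genuine spherical-harmonic framework together with sharper parabolic estimates on each angular block, which is precisely why the theorem is stated only for $n=2$.
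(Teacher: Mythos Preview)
Your overall architecture—compactness for large $\|h\|_{L^1}$, shape-Hessian coercivity for small $\|h\|_{L^1}$, diagonalisation in Fourier modes on $\partial\B^*$—matches the paper's, and your exact quadratic expansion is correct. Two steps, however, are genuine gaps.

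\textbf{Reduction to normal graphs.} You assert that convexity plus bang-bang structure lets you parameterise the competitor as $E_\phi=\{r<r^*+\phi(\theta)\}$. Convexity does force the maximiser of $\mathcal J_T$ over $\{f:\|f-f^*\|_{L^1}=\delta\}$ to be some $\mathds 1_{F_\delta}$, but nothing forces $F_\delta$ to be a normal graph over $\partial\B^*$: a set can be $L^1$-close to $\B^*$ and still have wild boundary. The paper bridges this with an ingredient you omit. It replaces $F_\delta$ by the level set $E_\delta=\{\Psi_\delta>c_\delta\}$ of the switch function $\Psi_\delta=\int_0^T p_{f_\delta}$; since $\Psi_\delta\to P^*$ in $\mathscr C^2$ and $P^*$ has non-degenerate radial derivative near $\partial\B^*$, this $E_\delta$ \emph{is} a $W^{2,p}$-small normal deformation. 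The passage from $F_\delta$ to $E_\delta$ is controlled by convexity once more, $\mathcal J_T(\mathds 1_{E_\delta})-\mathcal J_T(\mathds 1_{F_\delta})\geq\int_\O(\mathds 1_{E_\delta}-\mathds 1_{F_\delta})\Psi_\delta$, together with a uniform quantitative bathtub principle (Proposition~\ref{Pr:Bathtub}) bounding the right-hand side below by $\overline\omega\operatorname{Vol}(E_\delta\triangle F_\delta)^2$. The triangle inequality then reassembles the two pieces. Without this step your argument covers only a strict subclass of competitors.

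\textbf{Coercivity of the Hessian.} You claim that uniqueness of $f^*$ ``guarantees a strict inequality for each fixed mode.'' It does not: uniqueness of a maximiser yields at most $F''(0)[\phi,\phi]\leq 0$, never the strict sign (think of $-x^4$). In your own exact expansion $\mathcal J_T(f)-\mathcal J_T(f^*)=-\int_\O|h|\,|P^*-c|+\tfrac12\iint_\OT u_h^2$, uniqueness says the first term beats the second for every $h\neq 0$, but their second-order Taylor coefficients could still coincide. The paper does not invoke uniqueness here at all: it proves $\omega_k\leq\omega_1<0$ directly by parabolic comparison. Monotonicity $\omega_k\leq\omega_1$ comes from $y_k\leq y_1$ and $z_k\leq z_1$, obtained by writing the equations for the differences and exploiting the ordered potentials $k^2/r^2$; the strict sign $\omega_1<0$ comes from showing that $\overline\Phi:=z_1+\partial_r\overline p$ is a supersolution of a backward parabolic inequality with forcing $-\partial_r u^*>0$, whence $\overline\Phi\leq 0$ with strict inequality on a set of positive measure at $r=r^*$. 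Your high-frequency heuristic $\mathcal Q_k\lesssim e^{-ck^2T}$, even if correct, handles only large $k$; the low modes are precisely where your appeal to uniqueness fails.
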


\subsubsection{Bibliographical references}\label{Se:Biblio}

Let us now present the frameworks into which we think our present work fits.

\paragraph{Qualitative questions in optimal control problems}
The question of qualitative properties of optimal control problems has recently drawn a lot of attention. Indeed, in many situations, explicit computation of the optimal control is nearly impossible, and a line of research has emerged that deals with the question of knowing what optimal controls nearly look like, or whether or not these controls are (un)stable in a sense that has to be specified. Among all these qualitative queries, one may single out the following:
\begin{itemize}
\item \textbf{Insensitising controls.} The question of insensitising controls is a very natural one, and is a possible solution to the following question: given that it is often the case that one can not practically realise the exact control strategy and that some imperfections may arise, how can a \textit{robust} control strategy be constructed? In that context, the goal is to find an \textit{insensitising} optimal control. This question has been studied, for instance, in \cite{Alabau2,LissyPrivat} and, more recently, in \cite{LissyPrivatErvedoza}.
\item \textbf{The turnpike property.} The turnpike property states that, when dealing with time evolving optimal control problems, it is sometimes possible to actually find a \textit{nearly static} optimal strategy or, in other words, that the optimal control remains \textit{close}, in some sense that has to be quantified, to the solution of a stationary optimal control problem. First motivated by applications in economics \cite{Dorfman}, this field has been rapidly growing over the last decade and has found applications in many contexts (e.g. control of non-linear differential equations, control of the wave equation, control of semilinear heat equations, machine learning) \cite{2006,Cass1966,borjan,LanceTrelatZuazua,PighinSakamoto,TrelatZhang,TrelatZhangZuazua,Zuazua2017}. It has recently been derived for bilinear optimal control problems using quantitative inequalities for stationary optimisation problems \cite{MRB2020}.
\end{itemize} 

\paragraph{Shape derivatives for time-evolving problems}
Our work presents what is to the best of our knowledge the first detailed analysis of a second order shape derivative for a time-evolving optimal control problems (in the sense that a coercivity norm for the second order shape derivative is obtained), albeit it deals with shape derivatives with respect to a subdomain. Although the literature devoted to time-evolving optimal control problems is scarce, we would like to point to \cite{Moubachir2006} where a speed-method approach is presented, and to the recent preprint \cite{Harbrecht} where shape derivatives (with respect to the underlying domain $\O$) are computed and used to obtain numerical simulations of a shape optimisation problem.

\paragraph{Quantitative inequalities} The study of quantitative inequalities in shape optimisation problems is an enormous field. To mention a few works, we point to the seminal \cite{FuscoMaggiPratelli} for the quantitative isoperimetric inequality, and to \cite{BDPV} for quantitative spectral inequalities. Regarding quantitative inequalities for (stationary) control problems we refer to \cite{BrascoButtazzo} for a quantitative inequality for the natural Dirichlet energy, to \cite{CarlenLieb} for a quantitative spectral inequality (with respect to the potential) in $\R^n$ (both these works are done under $L^p$ constraints), to \cite{MazariQuantitative} for a quantitative spectral inequality in the ball under $L^1$ and $L^\infty$ constraints and to \cite{MRB2020} for a generalisation of this inequality to other domains, and for an application to the turnpike property. 

Let us comment on the type of estimates usually obtained: given a functional $\mathcal F:\O\mapsto \mathcal F(\O)$, a typical problem reads 
\begin{equation}\inf_{\O\,, \operatorname{Vol}(\O)=\overline V}\mathcal F(\O).\end{equation} Let us assume that, up to a translation, the unique minimiser of this functional is a ball $\overline \B$ of volume $\overline V$ (this is the case when $\mathcal F(\O)=\operatorname{Per}(\O)$), then the inequality obtained in \cite{FuscoMaggiPratelli} reads: there exists a constant $C>0$ such that, defining the Fraenkel asymmetry of $\O$ as 
\begin{equation}
\mathcal A(\O)=\inf_{x\in \R^n}\operatorname{Vol}\left( (x+\overline \B)\Delta \O\right)\end{equation} there holds 
\begin{equation}
\mathcal F(\O)-\mathcal F(\overline \B)\geq C\mathcal A(\O)^2.\end{equation}
In the case of estimate \eqref{Eq:IntroIndep}, the coercivity obtained is akin to this measure of asymmetry if the maximiser $\overline f$ writes $\overline f=\mathds 1_{\overline E}$: by defining $\mathcal J_T(E):= \mathcal J_T(\mathds 1_E)$ with a slight abuse of notation,  if we choose a competitor $f$ of the form $f=\mathds 1_E$ then estimate \eqref{Eq:DpValTi} rewrites
\begin{equation}
\mathcal J_T(\overline E)-\mathcal J_T(E)\geq C(T)\operatorname{Vol}\left(\overline E\Delta E\right)^2.\end{equation}

On the other hand, \eqref{Eq:DpValTd} may seem more surprising. If, indeed, we assume that $f^*(t,x)=\mathds 1_{E^*(t)}(x)$ and if the competitor $f$ is chosen to assume the form $f(t,x)=\mathds 1_{E(t)}(x)$ for some subset $E$ of $\O$ then, seeing $\overline E:=\cup_{t\in (0;T)}\{t\}\times E(t)$ and $\overline E^*:=\cup_{t\in (0;T)}\{t\}\times E^*(t)$ as subsets of the cylindrical domain $\OT$, the "natural quantity" that one should obtain should be the squared asymmetry of $\overline E$ with respect to $\overline E^*$, that is, $\operatorname{Vol}(\overline E\Delta \overline E^*)^2$. The Jensen inequality enables to recover this discrepancy. This stronger norm may be a consequence of having chosen a volume constraint for every $t$. It is unclear at this stage whether or not replacing the constraint
\begin{equation}\text{ for a.e. $t\in (0;T)$,} \int_\O f(t,\cdot)=V_0\end{equation} with a global constraint
\begin{equation}\iint_{\OT}f=V_0\end{equation} would yield the coercivity norm
\begin{equation}\left(\iint_\OT\left|f^*-f\right|\right)^2.\end{equation} We refer to the Conclusion, Section \ref{Cl:Time}.

\subsection{Schwarz's rearrangement and isoperimetric inequalities for parabolic equations}\label{Se:Schwarz}
In order to be able to comment on our results and methods of proof, we need to give the basic definition underlying most of our methods, that of Schwarz's rearrangement. The three books we refer to for a comprehensive introduction to rearrangements are \cite{Kawohl,Kesavan,Rakotoson}. Here, since we are already working in a ball $\O=\mathbb B(0;R)$, we only give the definitions for functions defined on the ball.

\begin{definition}\label{De:Schwarz}
For a function $\p\in L^2(\O)\,, \p\geq 0$, its Schwarz rearrangement is the unique radially symmetric non-increasing function $\p^\#:\O\to \R$ such that
\begin{equation}\forall t \in \R_+\,, \operatorname{Vol}\left(\{\p> t \}\right)=\operatorname{Vol}\left(\{\p^\#> t\}\right).\end{equation} We define its one-dimensional counter part $\p^\dagger:[0;R]\to \R$ as
\begin{equation}\p^\dagger(|x|):=\p^\#(x).\end{equation}
\end{definition} The first property is that the Schwarz rearrangement preserves all the Lebesgue norms:
\begin{equation}\label{Eq:Equimeasurability}
\forall p\in (1;+\infty)\,, \forall u \in L^p(\O)\,, u\geq 0\,,  \int_\O u^p=\int_\O(u^\#)^p.
\end{equation}
Of great importance to us are two inequalities. The first one, the so-called Poly\'a-Szeg\"{o} inequality asserts that 
\begin{equation}
\forall \p\in W^{1,2}_0(\O)\,, \p^\#\in W^{1,2}_0(\O)\text{ and }\int_\O |\n \p^\#|^2\leq \int_\O |\n \p|^2.\end{equation}
The equality case in this equality was fully derived in \cite{Brothers1988} (see also \cite{Ferone2003}), and quantitative versions were given in \cite{Barchiesi2014,CianchiEsposito}. The second one is the Hardy-Littlewood inequality:
\begin{equation}\forall f\,, g\in L^2(\O)\,, f\,, g\geq 0\,, \int_\O fg\leq \int_\O f^\#g^\#.\end{equation} This inequality can be rewritten in the following form: for a.e. $\tau$, 
\begin{equation}\int_{\{g>\tau\}}f\leq \int_{\{g^\#>\tau\}}f^\#.\end{equation}
A quantitative version of this inequality can be found in \cite{Cianchi2008} (and \cite{MRB2020} in a simpler case where smoothness of the involved function $f$ is assumed). In Propositions \ref{Pr:Bathtub} and \ref{Pr:BathtubTD}, we give uniform versions of this quantitative inequality for families of functions.

Comparison principle for parabolic equations started with the work of Bandle \cite{Bandle}, Vazquez \cite{Vazquez}, using the seminal ideas of Talenti \cite{Talenti}, and were later extended in a series of works by Alvino, Lions and Trombetti \cite{Alvino1986,Alvino1990} and Rakotoson and Mossino \cite{RakotosonMossino}. By "comparison principle for parabolic equations" we mean results that enable one to compare the solution $u$ of a parabolic equation of the form
\begin{equation}\frac{\partial u}{\partial t}-\Delta u=f(t,\cdot)\end{equation} with the solution $v$ of the symmetrised equation 
\begin{equation}\frac{\partial v}{\partial v}-\Delta v=f^\#(t,\cdot).\end{equation} Both equations are supplemented with Dirichlet boundary conditions, and we wilfully ignore first order terms. The correct comparison relation $\prec$ used for such comparisons is defined as:
\begin{equation}\label{Eq:Prec}\text{$f\prec g$ if and only if for any }
r \in [0;R]\,, \int_{\mathbb B(0;r)} f^\#\leq \int_{\mathbb B(0;r)} g,
\end{equation}
and the typical result asserts that $u^\#(t,\cdot)\prec v(t,\cdot)$. In this paper, we will rely, for the uniqueness result, Theorem \ref{Th:Uniqueness}, on the method of proof of \cite{RakotosonMossino}, which enables more easily to encompass the equality case. We expand on their techniques in the proof of Theorem \ref{Th:Uniqueness}, see Section \ref{Se:Uniq}. 

 \subsection{Plan of the paper}\label{Se:Plan}This paper is structured as follows:
 \begin{enumerate}
 \item In Section \ref{Se:Preliminary} we gather several elementary information about the optimisation problems (adjoint, switch function, regularity of the solutions, convexity of the functionals).
 \item In Section \ref{Se:Uniq}, we prove the uniqueness result stated in Theorem \ref{Th:Uniqueness}.
 \item Section \ref{Se:Td} contains the proof of Theorem \ref{Th:Td} and is independent of Section \ref{Se:Ti}.
  \item Section \ref{Se:Ti} corresponds to the proof of Theorem \ref{Th:Ti}. In it, we state our coercivity results for second order shape derivatives. This Section is independent of Section \ref{Se:Td}.
 \item The Conclusion, Section \ref{Se:Concl}, contains discussion about possible extensions, as well obstructions for generalising the results presented here.
 \end{enumerate}
 
\subsection{Notational conventions}\label{Su:Notation}
\begin{itemize}
\item For any $g\in L^2(\O)$, $g^\#$ denotes its Schwarz rearrangement and $g^\dagger$ its one-dimensional counterpart.
\item $\B^*=\mathbb B(0;r^*)$ is the unique centred ball of volume $V_0$. In other words, it is the only centred ball satisfying $\mathds1_{\B^*}\in \overline{\mathcal M}(\O).$
\item $u^*$ is the solution of \eqref{Eq:Main} associated with the static control $f\equiv f^*=\mathds 1_{B^*}$.
\item For a function $f$ that is discontinuous across a smooth hypersurface $\Sigma$ with oriented normal $\nu$, but continuous in $\O\backslash \Sigma$, the jump of $f$ across $\Sigma$ is 
\begin{equation}\left.\llbracket f\rrbracket\right|_\Sigma:=\lim_{t\to 0^+}\left(f(x+t\nu(x))-f(x-t\nu(x))\right).\end{equation}
\end{itemize}

\section{Preliminary results}\label{Se:Preliminary}
We gather here several results that will be used throughout the paper. We begin with some basic regularity estimates on the solutions of the equation. 
\begin{proposition}\label{Pr:Regularity}
For any $\alpha\in (0;1)$, there exists $M_\alpha>0$ such that, for any $f\in \mathcal M_T(\O)$, we have the estimate 
\begin{equation}
\Vert u_f(t,\cdot)\Vert_{\mathscr C^{0,\alpha}(\OT)}\leq M_\alpha.
\end{equation}

Furthermore, for any $\alpha\in (0;1)$ and almost every $t\in (0;T)$, $u_f(t,\cdot)\in \mathscr C^{1,\alpha}(\O)$.
\end{proposition}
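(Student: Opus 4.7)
The plan is to exploit the uniform $L^\infty$ bound built into the admissible set $\mathcal M_T(\O)$, combined with the smoothness of the fixed initial datum $u^0$, in order to reduce the claim to classical parabolic $L^p$ maximal regularity and Sobolev embedding. First, I would note that every $f\in \mathcal M_T(\O)$ satisfies $0\leq f\leq 1$ almost everywhere, so that $\Vert f\Vert_{L^\infty(\OT)}\leq 1$ uniformly in $f$. The initial datum $u^0\in \mathscr C^2(\O)\cap W^{1,2}_0(\O)$ is fixed, belongs to $W^{2,p}(\O)\cap W^{1,p}_0(\O)$ for every $p\in (1,+\infty)$, and satisfies the compatibility condition $u^0|_{\partial \O}=0$ needed at the parabolic corner.

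Next, I would invoke the parabolic $L^p$ theory of Ladyzhenskaya--Solonnikov--Uraltseva applied to \eqref{Eq:Main}: for every $p\in (1,+\infty)$ there is a constant $C_p>0$, independent of $f$, such that
\begin{equation*}
\Vert u_f\Vert_{W^{2,1}_p(\OT)}\leq C_p\bigl(\Vert f\Vert_{L^p(\OT)}+\Vert u^0\Vert_{W^{2,p}(\O)}\bigr),
\end{equation*}
where $W^{2,1}_p$ denotes the anisotropic parabolic Sobolev space with two spatial and one temporal derivative in $L^p$. Since $\Vert f\Vert_{L^p(\OT)}\leq |\OT|^{1/p}$ and $u^0$ is fixed, the right-hand side is bounded by a constant depending only on $p$ and the data. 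This gives the crucial uniform $W^{2,1}_p$ estimate over the whole admissible class.

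The first assertion then follows from the parabolic Sobolev embedding: for $p$ sufficiently large depending on $n$ and $\alpha$, one has $W^{2,1}_p(\OT)\hookrightarrow \mathscr C^{1+\alpha,(1+\alpha)/2}(\overline{\OT})$, which a fortiori embeds continuously into $\mathscr C^{0,\alpha}(\overline{\OT})$, yielding the constant $M_\alpha$. For the almost-every-time statement, Fubini applied to the same $W^{2,1}_p$ bound shows that $u_f\in L^p\bigl((0;T);W^{2,p}(\O)\bigr)$, so that $u_f(t,\cdot)\in W^{2,p}(\O)$ for almost every $t\in (0;T)$; choosing $p$ with $p>n/(1-\alpha)$ and invoking Morrey's embedding $W^{2,p}(\O)\hookrightarrow \mathscr C^{1,\alpha}(\overline\O)$ finishes the proof.

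I do not anticipate any genuine obstacle: this is a routine application of parabolic regularity, and the only point demanding a moment's care is verifying the compatibility conditions for the $L^p$ theory at the parabolic corner $\{0\}\times \partial \O$, which is automatic here thanks to the regularity and vanishing trace of $u^0$ and the smoothness of the boundary of the ball.
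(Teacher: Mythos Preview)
Your proposal is correct and follows essentially the same route as the paper: uniform $L^\infty$ bound on $f$, parabolic $L^p$ maximal regularity (the paper cites Lieberman, you cite Ladyzhenskaya--Solonnikov--Uraltseva), and then Sobolev embedding, with the a.e.-in-time $\mathscr C^{1,\alpha}$ statement obtained via $u_f(t,\cdot)\in W^{2,p}(\O)$ and Morrey. The only cosmetic difference is that you pass through the anisotropic space $W^{2,1}_p(\OT)$ and the stronger parabolic embedding into $\mathscr C^{1+\alpha,(1+\alpha)/2}$, whereas the paper contents itself with the isotropic $W^{1,p}(\OT)\hookrightarrow \mathscr C^{0,\alpha}$; both lead to the same conclusion.
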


\begin{proof}[Proof of Proposition \ref{Pr:Regularity}]

For the first point, we use \cite[Corollary 7.31, p.182]{Lieberman} which ensures that for any $p>1$,
\begin{equation}\int_0^T \Vert u(t,\cdot)\Vert_{W^{2,p}(\O)}+\left\Vert \frac{\partial u}{\partial t}\right\Vert_{L^p(\O)}\leq C(\Vert f\Vert_{L^\infty}+\Vert u^0\Vert_{\mathscr C^2}),\end{equation} where $C$ depends on the dimension, on $p$ and on $\O$. It thus follows that, in particular, for any $p\in (1;+\infty)$ there exists $C_p$ such that 
\begin{equation}\Vert u\Vert_{W^{1,p}(\OT)}\leq C_p.\end{equation} It then suffices to apply the Sobolev embedding $W^{1,p}(\OT)\hookrightarrow
\mathscr C^{0,\alpha}(\O)$ for $p$ large enough.

The second point follows from the fact that, from the same estimate, for any $p>1$ and almost every $t\in (0;T)$, $u_f(t,\cdot)\in W^{2,p}(\O)$. The conclusion follows by the Sobolev embedding $W^{2,p}(\O)\hookrightarrow \mathscr C^{1,\alpha}(\O)$.
\end{proof}

We then provide structural information about the functionals which we seek to optimise. In Proposition \ref{Pr:Convexity}, we establish convexity properties which will prove crucial while, in Proposition \ref{Pr:Adjoint}, we compute the adjoint and the switch function of the equation.
 
\begin{proposition}\label{Pr:Convexity}
The map $\mathcal J_T:\overline{\mathcal M}(\O)\ni f\mapsto \mathcal J_T(f)$ is strictly convex. In the same way, for any $\e> 0$, the map $\mathcal J_T^\e:\mathcal M_T(\O)\ni f\mapsto \mathcal J_T^\e(f)$ is strictly convex.

\end{proposition}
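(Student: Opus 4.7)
The plan rests on the observation that, by linearity of the heat equation, the map $f\mapsto u_f$ is \emph{affine}. Indeed, writing $u_0$ for the solution of \eqref{Eq:Main} with source $0$ and initial datum $u^0$, and $v_f$ for the solution with source $f$ and zero initial datum, uniqueness gives $u_f=u_0+v_f$, with $f\mapsto v_f$ linear. Consequently, for any admissible $f_1,f_2$ and any $t\in(0,1)$,
\begin{equation}
u_{tf_1+(1-t)f_2}(s,x)=t\,u_{f_1}(s,x)+(1-t)\,u_{f_2}(s,x)\quad\text{for every }(s,x)\in(0,T)\times\O.
\end{equation}

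From here, convexity of both functionals is immediate: since $x\mapsto x^2$ is convex on $\R$, the pointwise inequality
\begin{equation}
\bigl(t\,u_{f_1}(s,x)+(1-t)\,u_{f_2}(s,x)\bigr)^2\leq t\,u_{f_1}(s,x)^2+(1-t)\,u_{f_2}(s,x)^2
\end{equation}
holds, and integrating over $(0,T)\times\O$ (respectively, evaluating at $s=T$ and integrating over $\O$) proves convexity of $\mathcal J_T$ and of the final-time term. Summing them gives convexity of $\mathcal J_T^\e$.

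For strict convexity, I would use strict convexity of $x\mapsto x^2$. Suppose $\mathcal J_T(tf_1+(1-t)f_2)=t\mathcal J_T(f_1)+(1-t)\mathcal J_T(f_2)$ for some $t\in(0,1)$ and $f_1,f_2\in\overline{\mathcal M}(\O)$ (resp.\ $\mathcal M_T(\O)$). Then the above pointwise inequality must be an equality for a.e.\ $(s,x)\in\OT$, which forces $u_{f_1}(s,x)=u_{f_2}(s,x)$ a.e. The difference $w:=u_{f_1}-u_{f_2}$ vanishes a.e., yet satisfies
\begin{equation}
w_t-\Delta w=f_1-f_2\text{ in }\OT,\qquad w(0,\cdot)=0,\qquad w=0\text{ on }(0;T)\times\partial\O,
\end{equation}
so $f_1=f_2$ as distributions, hence a.e. The very same argument proves strict convexity of $\mathcal J_T^\e$: equality in the convex combination forces equality in the spacetime integral, which already yields $f_1=f_2$ (the final-time contribution only reinforces the conclusion).

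There is no genuine obstacle here; the only subtle point to watch is that one does \emph{not} have $u_f$ linear in $f$ (because of the nonzero initial condition $u^0$), so one must argue via affineness rather than linearity. Note also that one needs nothing about the $L^\infty$ or $L^1$ constraints beyond the fact that $\overline{\mathcal M}(\O)$ and $\mathcal M_T(\O)$ are convex subsets of the appropriate Lebesgue spaces, which is clear.
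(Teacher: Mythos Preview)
Your proof is correct. The paper takes a slightly different route: it computes the second-order G\^ateaux derivative of $\mathcal J_T$ at $f$ in an admissible direction $h$, observes that $\ddot u_f\equiv 0$ (because the equation is linear), and concludes that
\[
\ddot{\mathcal J_T}(f)[h,h]=\iint_{(0;T)\times\O}(\dot u_f)^2\geq 0,
\]
with strict inequality unless $h\equiv 0$. Your argument is more elementary: you bypass differentiation entirely by noting that $f\mapsto u_f$ is affine and then use pointwise strict convexity of $x\mapsto x^2$. Both approaches encode the same mechanism (linearity of the state map plus strict convexity of the square), and your careful remark that the map is affine rather than linear, because of the nonzero initial datum, is exactly the point one must not overlook. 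The paper's differential approach has the incidental advantage of recording the first-variation formula $\dot{\mathcal J}_T(f)[h]=\iint \dot u_f\,u_f$, which is reused immediately afterwards to derive the adjoint representation; your approach is cleaner if one only wants convexity.
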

\begin{proof}[Proof of Proposition \ref{Pr:Convexity}]
We only prove the convexity of $\mathcal J_T$, the convexity of $\mathcal J_T^\e$ following along the same lines.

It follows from standard argument that the map $\overline{\mathcal M}(\O)\ni f\mapsto u_f$ is twice G\^ateaux-differentiable. The convexity of the functional is equivalent to requiring that the second order G\^ateaux derivative be non-negative. For any admissible perturbation $h$ at $f$ (that is, such that for every $t>0$ small enough $f+th\in \overline{\mathcal M}(\O)$) the G\^ateaux-derivative of $u_f$ in the direction $h$, denoted by $\dot u_f$, solves 
\begin{equation}\label{Eq:Chanel}
\begin{cases}
\frac{\partial \dot u_f}{\partial t}-\Delta \dot u_f=h\text{ in }\OT\,, \\ \dot u_f=0\text{ on }(0;T)\times \partial \O\,, \\\dot u_f(0,\cdot)\equiv 0.\end{cases}\end{equation}
From this equation on $\dot u_f$, we deduce that the G\^ateaux-derivative of $\mathcal J_T$ at $f$ in the direction $h$ is given by 
\begin{equation}\label{Eq:KL}
\dot{\mathcal J}_T(f)[h]=\iint_\OT \dot u_f u_f.\end{equation}

In the same way, the second order G\^ateaux-derivative of $u_f$ in the direction $h$, denoted by $\ddot u_f$, satisfies
\begin{equation}
\begin{cases}
\frac{\partial \ddot u_f}{\partial t}-\Delta \ddot u_f=0\text{ in }\OT\,, \\ \ddot u_f=0\text{ on }(0;T)\times \partial \O\,, \\\ddot u_f(0,\cdot)\equiv 0\end{cases}\end{equation} and thus
\begin{equation}\ddot u_f=0.\end{equation} 
Furthermore, the second order G\^ateaux-derivative of $\mathcal J_T$ in the direction $h$, denoted by $\ddot{\mathcal J_T}(f)[h,h]$, is given by 
\begin{equation}\ddot{\mathcal J_T}(f)[h,h]=\iint_\OT \left(\dot u_f\right)^2+\iint_\OT\ddot u_f u_f=\iint_\OT \left( \dot u_f\right)^2\geq 0\end{equation} and the last inequality is strict unless $h\equiv 0.$ Since the second-order G\^ateaux-derivative of the functional is non-negative, the functional is convex.
\end{proof}
This convexity property is one of the fundamental point to carry out the proof of Theorem \ref{Th:Ti}.

\begin{proposition}\label{Pr:Adjoint}Let $f\in \overline{ \mathcal M}(\O)$. Let $p_f$ be the unique solution of 
\begin{equation}\label{Eq:Fendi}\begin{cases}
\frac{\partial p_f}{\partial t}+\Delta p_f=-u_f\text{ in }\OT,
\\ p_f(T,\cdot)=0\,, 
\\ p_f(t,\cdot)=0\text{ on }(0;T)\times \partial \O.\end{cases}\end{equation}
Then for any $f\in \overline{\mathcal M}(\O)$ and any admissible perturbation $h$ at $f$, the G\^ateaux-derivative of $\mathcal J_T$ at $f$ in the direction $h$ is given by  
\begin{equation}\label{Eq:Coco}\dot{\mathcal J}_T(f)[h]=\iint_\OT h(x)p_f(t,x)dtdx.\end{equation} In the same way, let us consider a parameter $\e>0$. Let $f\in \mathcal M_T(\O)$ and define $p_{\e,f}$ as the unique solution of 
\begin{equation}\begin{cases}
\frac{\partial p_{\e,f}}{\partial t}+\Delta p_{\e,f}=-u_f\text{ in }\OT,
\\ p_{\e,f}(T,\cdot)=\e u_f(T,\cdot)\,, 
\\ p_{\e,f}(t,\cdot)=0\text{ on }(0;T)\times \partial \O.\end{cases}\end{equation} Then for any $f\in {\mathcal M_T}(\O)$ and any admissible perturbation $h$ at $f$, the G\^ateaux-derivative of $\mathcal J_T^\e$ at $f$ in the direction $h$ is given by  
\begin{equation}\dot{\mathcal J}_T^\e(f)[h]=\iint_\OT h(t,x)p_f(t,x)dtdx.\end{equation}
$p_f$ is dubbed the switch function for the functional $\mathcal J_T$, while $p_{\e,f}$ is dubbed the switch function for the functional $\mathcal J_T^\e$.
\end{proposition}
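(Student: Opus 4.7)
The plan is to perform the standard adjoint duality computation by integration by parts in space-time, treating the two cases (with and without the $\e$-terminal term) in parallel since they differ only by the terminal condition on the adjoint state.

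First I would recall from the proof of Proposition \ref{Pr:Convexity} that the G\^ateaux-derivative of $\mathcal{J}_T$ at $f$ in the direction $h$ is given by
\begin{equation*}
\dot{\mathcal{J}}_T(f)[h] = \iint_\OT \dot u_f\, u_f,
\end{equation*}
where $\dot u_f$ solves the linear heat equation \eqref{Eq:Chanel} with source $h$, homogeneous Dirichlet boundary data, and zero initial datum. For the functional $\mathcal{J}_T^\e$, the analogous computation yields the additional terminal contribution
\begin{equation*}
\dot{\mathcal{J}}_T^\e(f)[h] = \iint_\OT \dot u_f\, u_f + \e \int_\O \dot u_f(T,\cdot)\, u_f(T,\cdot).
\end{equation*}

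Next I would multiply the equation for $\dot u_f$ by the adjoint $p_f$ (resp.\ $p_{\e,f}$) and integrate over $\OT$. Integration by parts in time gives
\begin{equation*}
\iint_\OT \frac{\partial \dot u_f}{\partial t} p = \int_\O \dot u_f(T,\cdot)\, p(T,\cdot) - \int_\O \dot u_f(0,\cdot)\, p(0,\cdot) - \iint_\OT \dot u_f\, \frac{\partial p}{\partial t},
\end{equation*}
and integration by parts twice in space, combined with the homogeneous Dirichlet data satisfied by both $\dot u_f$ and $p$, yields
\begin{equation*}
-\iint_\OT \Delta \dot u_f\, p = -\iint_\OT \dot u_f\, \Delta p.
\end{equation*}
Summing these identities and using $\dot u_f(0,\cdot)=0$ together with the adjoint equation $\frac{\partial p}{\partial t}+\Delta p=-u_f$, I obtain
\begin{equation*}
\iint_\OT h\, p = \iint_\OT \dot u_f\, u_f + \int_\O \dot u_f(T,\cdot)\, p(T,\cdot).
\end{equation*}

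Finally, I would specialise the terminal boundary term. For $p=p_f$ with $p_f(T,\cdot)=0$, the boundary term vanishes and I recover \eqref{Eq:Coco}. For $p=p_{\e,f}$ with $p_{\e,f}(T,\cdot)=\e u_f(T,\cdot)$, the boundary term becomes exactly $\e\int_\O \dot u_f(T,\cdot) u_f(T,\cdot)$, which matches the extra contribution in $\dot{\mathcal J}_T^\e(f)[h]$, yielding the second formula. There is no real obstacle here; the only point worth checking is that the regularity of $u_f$, $\dot u_f$, and $p_f$ (granted by Proposition \ref{Pr:Regularity} and standard parabolic theory applied to the adjoint problem) is sufficient to justify the integrations by parts, which is routine.
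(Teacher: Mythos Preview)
Your proof is correct and follows essentially the same approach as the paper: start from the expression $\dot{\mathcal J}_T(f)[h]=\iint_\OT \dot u_f u_f$ established in Proposition~\ref{Pr:Convexity}, multiply the equation on $\dot u_f$ by the adjoint state, and integrate by parts in space-time. The paper's proof is more terse (it only writes out the $\e=0$ case and leaves the integration by parts implicit), whereas you spell out the time and space integrations separately and handle both terminal conditions explicitly, but the argument is the same.
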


\begin{proof}[Proof of Proposition \ref{Pr:Adjoint}]
We only prove this proposition in the case $f\in \overline{\mathcal M}(\O)$, the time-dependent case following along the same exact lines. Let us first note that, as a backward, linear heat equation, existence and uniqueness of a solution to \eqref{Eq:Fendi} is guaranteed.

To get \eqref{Eq:Coco}, we start from the expression \eqref{Eq:KL} of the first order G\^ateaux-derivative of the functional $\mathcal J_T$:
\begin{equation}\dot{\mathcal J}_T(f)[h]=\iint_\OT \dot u_f u_f,\end{equation} where $\dot u_f$ solves \eqref{Eq:Chanel}. If we multiply this equation by the solution $p_f$ of \eqref{Eq:Fendi} and integrate by parts, we get 
\begin{equation}
\iint_\OT \dot u_f u_f=\iint_\OT h p_f.\end{equation} Since $\dot{\mathcal J}_T(f)[h]=\iint_\OT \dot u_f u_f,$ the conclusion follows.

\end{proof}
 We conclude this section with some information about the function $u^*$ solution of \eqref{Eq:Main} with $f\equiv f^*$.

\begin{proposition}\label{Pr:Radial}
The solution $u^*$ of \eqref{Eq:Main} with $f\equiv f^*$ is radially symmetric. Furthermore, for any $r\in (0;R)$ and any $t\in (0;T)$
\begin{equation}
-\frac{\partial u^*}{\partial r}(t,r)>0.
\end{equation}
\end{proposition}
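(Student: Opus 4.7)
The statement splits into two claims: that $u^*$ is radially symmetric, and that $-\partial_r u^*(t,r) > 0$ on $(0,T)\times(0,R)$. For the first, the plan is to invoke uniqueness: the Laplacian, the initial datum $u^0$, and the source $f^* = \mathds 1_{\B^*}$ are all invariant under every rotation of $\O$ about the origin, so for any orthogonal matrix $R$ the map $(t,x) \mapsto u^*(t, Rx)$ solves \eqref{Eq:Main} with exactly the same data. Uniqueness of solutions to the linear parabolic problem then forces $u^*(t, Rx) = u^*(t, x)$, whence $u^*(t, x) = U(t, |x|)$ for some $U$ satisfying the radial equation
\[U_t - U_{rr} - \frac{n-1}{r} U_r = \mathds 1_{[0,r^*]}(r) \quad \text{in } (0,T) \times (0,R),\]
with $U(0,\cdot) = u^0$, $U(\cdot, R) = 0$, $\partial_r U(\cdot, 0) = 0$, and $U$ of class $\mathscr C^{1,\alpha}$ in $r$ by the parabolic regularity of Proposition \ref{Pr:Regularity} applied radially.

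For strict monotonicity, the plan is to run a maximum principle argument on $W := -\partial_r U$. Formal differentiation of the radial equation in $r$ gives, on each of the strips $(0,T) \times (0,r^*)$ and $(0,T) \times (r^*, R)$, the homogeneous equation
\[W_t - W_{rr} - \frac{n-1}{r} W_r + \frac{n-1}{r^2} W = 0.\]
The function $W$ is continuous across $r = r^*$, and reading $U_{rr} = U_t - \frac{n-1}{r} U_r - f^*$ with $U_t, U_r$ continuous yields the jump $\llbracket \partial_r W\rrbracket_{r^*} = -\llbracket U_{rr}\rrbracket_{r^*} = \llbracket f^*\rrbracket_{r^*} = -1 < 0$. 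The initial value is $W(0,\cdot) = -\partial_r u^0 \geq 0$ by the monotonicity of $u^0$; $W(t, 0) = 0$ by radial smoothness at the origin; and the strong maximum principle applied to $u^*$ (which is positive in the interior since $f^* \geq 0$ is nontrivial and $u^0 \geq 0$), together with the Hopf lemma, yield $W(t, R) > 0$.

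Nonnegativity of $W$ on $[0,T]\times[0,R]$ will then follow from a weak minimum principle. At any putative negative interior minimum $(t_0, r_0)$: if $r_0 \neq r^*$, the standard inequalities $\partial_t W \leq 0$, $\partial_r W = 0$, $\partial_{rr} W \geq 0$ combined with the PDE force $\frac{n-1}{r_0^2} W(t_0, r_0) \geq 0$, contradicting $W(t_0, r_0) < 0$; if $r_0 = r^*$, the one-sided radial derivatives at a minimum must satisfy $\partial_r W(t_0, r^{*-}) \leq 0 \leq \partial_r W(t_0, r^{*+})$, contradicting the negative jump $\llbracket \partial_r W\rrbracket_{r^*} = -1$. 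The unbounded coefficient at $r = 0$ will be neutralised by working on $(\delta, R)$ and letting $\delta \to 0$, using $W(\cdot, 0) = 0$.

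Strict positivity will then be obtained from the strong parabolic maximum principle. Should $W$ vanish at some interior point of $(0,T)\times (r^*, R)$, propagation of zeros forces $W \equiv 0$ on the whole strip up to that time, contradicting $W(\cdot, R) > 0$. Should $W$ vanish at an interior point of $(0,T)\times (0, r^*)$, propagation again yields $W \equiv 0$ there, whence $\partial_r W(t, r^{*-}) = 0$ and the jump condition imposes $\partial_r W(t, r^{*+}) = -1$; on the other hand $W \geq 0$ on $(r^*, R)$ with $W(t, r^*) = 0$ forces $\partial_r W(t, r^{*+}) \geq 0$, the contradiction. The main technical obstacle is thus the interplay between the distributional $\delta$-source produced by the jump of $f^*$ across $\partial \B^*$ and the unbounded zeroth-order coefficient $\frac{n-1}{r^2}$ at the origin; the former is isolated by the two-strip decomposition with explicit use of the jump relation for $\partial_r W$, and the latter neutralised by approximation near $r = 0$.
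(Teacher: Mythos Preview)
Your proposal is correct and follows the same overall strategy as the paper: write the radial PDE for $u^*$, differentiate in $r$, and analyse the resulting equation for the radial derivative together with the jump condition $\llbracket \partial_r(\partial_r u^*)\rrbracket_{r^*}$ induced by the discontinuity of $f^*$. The implementation differs in two places. For the sign $\partial_r u^*\leq 0$, the paper uses an energy argument: it sets $z=\partial_r u^*$, multiplies the equation by $z_+$ against the weight $r^{n-1}$, and observes that the jump contributes the nonnegative boundary term $(r^*)^{n-1}z_+(t,r^*)$, forcing $z_+\equiv 0$; you instead run a pointwise weak minimum principle on $W=-z$, handling the interface $r=r^*$ via the one-sided derivative test. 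Both work; the energy argument avoids any discussion of regularity at a putative minimum, while your pointwise argument is closer to the textbook comparison principle. For strict negativity, the paper uses the standard perturbation $z_\varepsilon=z-\varepsilon t$ to manufacture a strict differential inequality, whereas you invoke the strong maximum principle and propagation of zeros on each strip, combined with the Hopf inequality at $r=R$ and the jump at $r^*$ to rule out identically vanishing. Your route is arguably cleaner on this last point, as it makes transparent exactly why the interface and the boundary datum at $r=R$ each separately obstruct an interior zero.
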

\begin{proof}[Proof of Proposition \ref{Pr:Radial}]
The radial symmetry of the solution is immediate. In radial coordinates, and with a slight abuse of notation, $u^*$ satisfies
\begin{equation}\label{Eq:MainEtoile}
\begin{cases}
\frac{\partial u^*}{\partial t}-\frac1{r^{n-1}}\frac{\partial }{\partial r}\left({r^{n-1}}\frac{\partial u^*}{\partial r}\right)=f^*\text{ in } (0;T)\times (0;R)\,, 
\\ u^*(t,R)=\frac{\partial u^*}{\partial r}(t,0)=0\text{ for any }t\,, 
\\ u^*(0,\cdot)=u^0.
\end{cases}
\end{equation}
From Proposition \ref{Pr:Regularity} above we can differentiate $u^*$ with respect to $r$. Let us write 
\begin{equation}z:=\frac{\partial u}{\partial r}.\end{equation} It follows from \eqref{Eq:MainEtoile} that $z$ solves 
\begin{equation}\frac{\partial z}{\partial t}-\frac1{r^{n-1}}\frac{\partial  }{\partial r}\left({r^{n-1}}\frac{\partial z}{\partial r}\right)=-\frac{(n-1)z}{r^2}\text{ in } (0;T)\times (0;R)\end{equation} and that the following jump condition is satisfied at $r=r^*$:
\begin{equation}\left\llbracket \frac{\partial z}{\partial r}\right\rrbracket(t,r^*)=-\left\llbracket f^*\right\rrbracket(r^*)=1>0.\end{equation} Since $u^*\geq 0$, the parabolic Hopf Lemma implies that for any $t>0$, 
\begin{equation}
z(t,R)< 0.\end{equation}Differentiating \eqref{Eq:MainEtoile} with respect to $r$ and remembering that $u^0$ is non-increasing, we obtain that $z$ solves 
\begin{equation}\label{Eq:z}
\begin{cases}
\frac{\partial z}{\partial t}-\frac1{r^{n-1}}\frac{\partial  }{\partial r}\left(r^{n-1}\frac{\partial z}{\partial r}\right)=-\frac{(n-1)z}{r^2}\text{ in } (0;T)\times (0;R)\,, 
\\ z(t,R)<0\text{ for any }t\,, 
\\ z(t,0)=0,
\\ \left\llbracket \frac{\partial z}{\partial r}\right\rrbracket(t,r^*)=1\,, t\in (0;T),
\\ z(0,\cdot)\leq0.
\end{cases}
\end{equation}
Let us then show that for any $(t,r)\in (0;T)\times (0;R)$
\begin{equation}z(t,r)<0.\end{equation}
 First of all, multiplying \eqref{Eq:z} by the positive part $z_+$ of $z$ we get (keeping in mind that $z_+(t,R)=0$)
\begin{equation}
\frac12\frac\partial{\partial t}\int_0^R r^{n-1} z_+(t,r)^2dr+\int_0^R r^{n-1}\left(\frac{\partial z_+}{\partial r}\right)^2+(r^*)^{n-1}z_+(t,r^*)=-\int_0^R \frac{(n-1)z_+(t,r)^2}{r^{3-n}}dr
\end{equation}  so that  $z_+(t,\cdot)=0$. As a consquence, $z\leq 0$. To argue that $z<0$ in $(0;T)\times \O$, we follow the same procedure as for the strong parabolic maximum principle. If we first assume $z\leq 0$ satisfies an inequality rather than an equality, that is, that $z$ satisfies
$$\frac{\partial z}{\partial t}-\frac1{r^{n-1}}\frac{\partial  }{\partial r}\left(r^{n-1}\frac{\partial z}{\partial r}\right)<-\frac{(n-1)z}{r^2}$$ then if by contradiction we assume that there exists $(t_0,r_0)$, with $r_0\in (0;R]$ and $t_0\in (0;T)$ such that $z(t_0,r_0)=0$, it follows that $r_0<R$. By the jump condition, $r_0\neq r^*$. Since $t_0<T$, plugging the optimality conditions, the contradiction follows.  To exclude the case $t_0=T$ it suffices to consider the equation on $[0;T+\epsilon]\,, \epsilon>0$ and to carry out the same reasoning in $(0;T+\epsilon)$. To then pass from this case (strict inequality) to ours (the equality case), with 
$$\frac{\partial z}{\partial t}-\frac1{r^{n-1}}\frac{\partial  }{\partial r}\left(r^{n-1}\frac{\partial z}{\partial r}\right)=-\frac{(n-1)z}{r^2}$$ it suffices to consider $z_\e(t,r):=z(t,r)-\e t$ and the conclusion follows from passing to the limit $\e\to 0^+$.
\end{proof}

\section{Proof of Theorem \ref{Th:Uniqueness}: Uniqueness of maximisers}\label{Se:Uniq}
\begin{proof}[Proof of Theorem \ref{Th:Uniqueness}]
It follows from \cite{RakotosonMossino} that $f^*$ is a solution of \eqref{Eq:PvTi} and \eqref{Eq:PvTd}. The uniqueness property for \eqref{Eq:PvTd} implies uniqueness for \eqref{Eq:PvTi} so we focus on the time-dependent case. Let us define ${u^*}$ as the solution of \eqref{Eq:Main} associated with $f\equiv f^*$. We consider another solution $f$ of \eqref{Eq:PvTd} and the solution $u$ of \eqref{Eq:Main} associated. By convexity of the functional we can assume that $f$ is a characteristic function so that \begin{equation}f^\#=f^*.\end{equation}

We proceed along a series of claims. The first one is :
\begin{claim}\label{Cl:RM1}
If $f$ solves \eqref{Eq:PvTd} and if $u$ is the associated solution of \eqref{Eq:Main} then for almost every $t\in (0;T)$, there holds 
\begin{equation}
u^\#(t,\cdot)={u^*}(t,\cdot).\end{equation}
\end{claim}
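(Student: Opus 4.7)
The plan is to exploit the maximality of $f$ and equimeasurability of $L^2$-norms under Schwarz rearrangement to reduce to an equality case in the parabolic Schwarz comparison principle, and then to extract from it the claimed pointwise identity via a rigidity argument.

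First, I would observe that $\mathcal J_T^\e$ depends on $u_f$ only through $L^2$-norms, which are invariant under Schwarz rearrangement in the $x$ variable by~\eqref{Eq:Equimeasurability}. Combined with $(u^*)^\# = u^*$ (which holds by Proposition~\ref{Pr:Radial}), the identity $\mathcal J_T^\e(f) = \mathcal J_T^\e(f^*)$ reads
\[
\tfrac12\iint_\OT (u^\#)^2 + \tfrac{\e}{2}\int_\O \bigl(u^\#(T,\cdot)\bigr)^2 = \tfrac12\iint_\OT (u^*)^2 + \tfrac{\e}{2}\int_\O \bigl(u^*(T,\cdot)\bigr)^2.
\]
Since $f$ has been reduced to a characteristic function so that $f^\#(t,\cdot) = f^*$ for a.e.~$t$, and since $u^0$ is radial non-increasing, the parabolic Schwarz comparison principle of \cite{Alvino1990, Bandle, RakotosonMossino} yields
\[
u^\#(t,\cdot) \prec u^*(t,\cdot) \quad \text{for a.e.~} t \in (0;T).
\]

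I would then upgrade this $\prec$-majorisation to a pointwise $L^2$-inequality via integration by parts in the radial variable. Writing $U(t,r) := \int_{\mathbb B(0;r)} u^\#(t,\cdot)$ and $U^*(t,r) := \int_{\mathbb B(0;r)} u^*(t,\cdot)$, so that $U \leq U^*$, the Dirichlet condition $u^\#(t,R) = 0$ together with $U(t,0) = U^*(t,0) = 0$ yields, after one integration by parts, the second inequality below, and another integration by parts gives the final equality:
\[
\int_\O (u^\#)^2(t,\cdot) = -\int_0^R \partial_r u^\#(t,r)\, U(t,r)\, dr \leq -\int_0^R \partial_r u^\#(t,r)\, U^*(t,r)\, dr = \int_\O u^\#(t,\cdot)\, u^*(t,\cdot),
\]
where I used $\partial_r u^\#(t,\cdot) \leq 0$. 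Cauchy--Schwarz then yields $\int_\O (u^\#)^2(t,\cdot) \leq \int_\O (u^*)^2(t,\cdot)$ for a.e.~$t$, and analogously at $t=T$.

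Combining these pointwise $L^2$-inequalities with the equality displayed above forces $\int_\O (u^\#)^2(t,\cdot) = \int_\O (u^*)^2(t,\cdot)$ for a.e.~$t$, and the same at $t=T$; consequently, equality must hold throughout the previous chain. Equality in Cauchy--Schwarz then forces $u^\#(t,\cdot) = c(t)\, u^*(t,\cdot)$ a.e.~in $\O$ with $c(t) \geq 0$, and the $L^2$-norm identity imposes $c(t) = 1$, proving the claim. The main obstacle is precisely this rigidity step: one must extract pointwise identity from the equality in the $L^2$-majorisation almost everywhere in $t$, which requires a careful treatment of the equality case in the parabolic Schwarz symmetrisation. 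This is exactly where the technique of Rakotoson and Mossino \cite{RakotosonMossino} becomes essential, as announced by the author, since it provides a framework in which equality cases in parabolic rearrangement can be analysed pointwise in time.
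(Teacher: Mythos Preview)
Your proof is correct, and it is complete at the sentence ``proving the claim''; the final paragraph is unnecessary and somewhat misleading. Once equality holds in Cauchy--Schwarz, the conclusion $u^\#(t,\cdot)=u^*(t,\cdot)$ follows immediately, and no appeal to Rakotoson--Mossino is needed for \emph{this} claim. Their framework enters only in the paper's subsequent claims, where one must show that the level sets of $u$ itself (not of $u^\#$) are concentric balls.

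Your route differs from the paper's both in how the $L^2$ inequality is derived and in the rigidity step. The paper cites \cite[Proposition~2]{alvino1991} to pass from $u^\#\prec u^*$ to $(u^\#)^2\prec (u^*)^2$, integrates over $\O$, and then uses convex-geometric structure: by \cite{Alvino1989} the extreme points of $\mathscr K(u^*)=\{g:g\prec u^*\}$ are exactly the functions with $g^\#=u^*$, and since $g\mapsto\int_\O g^2$ is strictly convex on this compact convex set its maximisers are extreme, whence $(u^\#)^\#=u^*$. Your integration-by-parts argument, which exploits $\partial_r u^\dagger\leq 0$ together with the cumulative-integral majorisation $U\leq U^*$, followed by the equality case of Cauchy--Schwarz, is more elementary and entirely self-contained, avoiding both external references. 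The paper's approach, in exchange, generalises at once to any strictly convex integrand in place of $u\mapsto u^2$.
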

\begin{proof}[Proof of Claim \ref{Cl:RM1}]
It follows from \cite{RakotosonMossino} and the results recalled in the introduction that for almost every $t\in (0;T)$ we have 
\begin{equation}\label{Eq:Int}
u^\#(t,\cdot)\prec {u^*}(t,\cdot).\end{equation} 
The relation $\prec$ was defined in Equation \eqref{Eq:Prec}. Thus, from \cite[Proposition 2]{alvino1991} we have that for almost every $t\in (0;T)$ we have 
\begin{equation}(u^\#)^2(t,\cdot)\prec {(u^*)}^2(t,\cdot).\end{equation} Integrating this inequality in time and in space yields
\begin{equation}\iint_{(0;T)\times \O} \left(u^\#\right)^2\leq \iint_{(0;T)\times \O} \left({u^*}\right)^2.\end{equation} However, by equimeasurability of the Schwarz rearrangement \eqref{Eq:Equimeasurability} we have
\begin{equation}
\iint_{(0;T)\times \O}u^2=\iint_{(0;T)\times \O} (u^\#)^2.\end{equation} 
Since $f$ is a maximiser of \eqref{Eq:PvTd} it follows that equality holds for almost every $t$ in 
\begin{equation}\int_\O \left(u^\#\right)^2(t,\cdot)\leq \int_\O \left({u^*}\right)^2(t,\cdot).\end{equation} Thus we have for almost every $t\in (0;T)$, 
\begin{equation}\label{Eq:Pa}\int_\O \left(u^\#\right)^2(t,\cdot)= \int_\O \left({u^*}\right)^2(t,\cdot).\end{equation} 
Let us now introduce the set $\mathscr K({u^*})$ defined as
\begin{equation}\mathscr K({u^*})=\left\{g\in L^2(\O)\,, g\prec {u^*}\right\}.\end{equation}
From \cite{Alvino1989} this is a compact (for the weak $L^\infty-*$ topology) and convex set whose set of extreme points is 
\begin{equation}\mathscr C({u^*})=\left\{g\in L^2(\O)\,, g^\#={u^*}\right\}.\end{equation}
Since $x\mapsto x^2$ is strictly convex, the map $\mathscr K(v)\ni g\mapsto \int_\O g^2$ is strictly convex. Besides, once again because of the convexity of $x\mapsto x^2$, we have, for any $g\in \mathscr K(u^*)$, $$(g^\#)^2=(g^2)^\#.$$
 As a consequence, the only solutions of the maximisation problem
\begin{equation}\label{Eq:LS}
\sup_{g\in \mathscr K({u^*})}\int_\O g^2
\end{equation} are exactly the elements of $\mathscr C({u^*})$. 

On the other hand, \eqref{Eq:Pa} states that $u(t,\cdot)$ is a solution of \eqref{Eq:LS}, so it follows that for almost every $t\in (0;T)$ there holds 
\begin{equation}
u^\#(t,\cdot)=u^*(t,\cdot).\end{equation}

\end{proof}
In particular, and this is the main point of this proof, the two following properties hold: first,
\begin{equation}\label{Eq:Simp} \text{ If $f$ solves \eqref{Eq:PvTd} then for a.e. $t\in (0;T)$, } u^\dagger(t,\cdot)=(u^*)^\dagger(t,\cdot).\end{equation} Second, we have, as a consequence the following fact:
\begin{equation}\text{If $f$ solves \eqref{Eq:PvTd} then for a.e. $t\in (0;T)$, }\int_\O u(t,\cdot)=\int_\O u^\#(t,\cdot)=\int_\O u^*(t,\cdot).\end{equation}

We then  prove that if $f$ solves \eqref{Eq:PvTd}, then all the level sets of $u$ are balls.
\begin{claim}\label{Cl:Balls}
If $f$ solves \eqref{Eq:PvTd}, then all the level sets of $u$ are balls.
\end{claim}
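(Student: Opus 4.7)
The strategy is to promote the distributional equality $u^\#(t,\cdot)=u^*(t,\cdot)$ from Claim \ref{Cl:RM1} into a geometric rigidity statement for $u(t,\cdot)$ itself. The two key ingredients I would use are a careful re-reading of the proof of the parabolic comparison principle in \cite{RakotosonMossino} and the equality case of the P\'olya--Szeg\H{o} inequality \cite{Brothers1988, Ferone2003}.

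First, I would revisit the derivation of $u^\#(t,\cdot)\prec u^*(t,\cdot)$ in \cite{RakotosonMossino}. It proceeds through the ODE satisfied by the distribution function $\mu_u(t,c):=\operatorname{Vol}(\{u(t,\cdot)>c\})$ and uses, at each time and each height $c$, the classical Euclidean isoperimetric inequality on the super-level set $\{u(t,\cdot)>c\}$ together with the co-area formula. Since Claim \ref{Cl:RM1} forces the pointwise equality $u^\#(t,\cdot)=u^*(t,\cdot)$, tracking equality cases in the Rakotoson--Mossino chain yields, for a.e. $(t,c)\in(0;T)\times \R_+$, saturation of the isoperimetric inequality for the set $\{u(t,\cdot)>c\}$. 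Equivalently, this produces the P\'olya--Szeg\H{o} equality
\begin{equation*}
\int_\O |\nabla u(t,\cdot)|^2 \,dx = \int_\O |\nabla u^\#(t,\cdot)|^2 \,dx \qquad \text{for a.e. } t\in(0;T).
\end{equation*}

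Second, I would apply the Brothers--Ziemer rigidity theorem \cite{Brothers1988}. It requires the non-degeneracy condition
\begin{equation*}
\operatorname{Vol}\bigl(\{x\in\O\,:\,|\nabla u^\#(t,x)|=0\}\cap \{u^\#(t,\cdot)>0\}\bigr)=0,
\end{equation*}
which holds here by combining Claim \ref{Cl:RM1} and Proposition \ref{Pr:Radial}: the identification $u^\#(t,\cdot)=u^*(t,\cdot)$ together with $-\frac{\partial u^*}{\partial r}(t,r)>0$ on $(0;R)$ rules out any flat plateau away from the centre. Brothers--Ziemer then ensures that $u(t,\cdot)$ is a translate of $u^*(t,\cdot)$ for a.e. $t\in(0;T)$, and consequently every super-level set of $u(t,\cdot)$ is a ball.

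The main obstacle I anticipate is the careful bookkeeping in the first step: tracking equality cases throughout the Rakotoson--Mossino derivation is delicate, and a cleaner alternative may be to plug Claim \ref{Cl:RM1} directly back into their chain of inequalities and extract saturation of the isoperimetric inequality for a.e. level set, thereby producing "level sets are balls" without any detour through P\'olya--Szeg\H{o}. A subsequent step, beyond the scope of this claim, would then exploit the Dirichlet boundary condition $u(t,\cdot)=0$ on $\partial\O$ to pin the common centre of these balls to the origin, yielding $u=u^*$ and hence $f=f^*$.
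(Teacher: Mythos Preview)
Your proposal is correct, and the ``cleaner alternative'' you flag in the last paragraph is precisely the route the paper takes: it rewrites the Rakotoson--Mossino chain for $u$ in terms of $k(t,\tau)=\int_0^\tau u^\dagger(t,s)\,ds$, observes that $K:=k-k_{u^*}\equiv 0$ by Claim~\ref{Cl:RM1}, and concludes that every inequality in the chain---in particular the isoperimetric inequality on each super-level set---must be an equality, so all level sets are balls. Your primary route via Brothers--Ziemer is a genuine alternative and in fact buys you more: it yields that $u(t,\cdot)$ is a translate of $u^*(t,\cdot)$, so the balls are automatically concentric (for each fixed $t$), which partially absorbs the paper's next Claim~\ref{Cl:BouleConcentrique}. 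Two small points of care: first, the word ``Equivalently'' in your step~2 is a slight overreach---isoperimetric saturation alone does not give the P\'olya--Szeg\H{o} equality, you also need that $|\nabla u|$ is constant on level sets, which comes from the Cauchy--Schwarz step in the same chain (and is indeed forced by $K\equiv 0$); second, the non-degeneracy hypothesis for Brothers--Ziemer is, as you note, cleanly supplied by Proposition~\ref{Pr:Radial}. The paper's direct approach is shorter for the claim as stated; yours is heavier but delivers concentricity for free, leaving only the centering at the origin to be handled separately.
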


\begin{proof}[Proof of Claim \ref{Cl:Balls}]

We follow the approach of \cite{RakotosonMossino}. We first recall \cite[Theorem 1.2]{RakotosonMossino}: if $\p\in W^{1,2}((0;T),L^2(\O))$ then $\p^\#\in W^{1,2}((0;T),L^2(\O))$ and moreover there holds, if $\p$ only has measure sets of measure zero,
\begin{equation}\label{Eq:RakoMossi}
\frac{\partial \p^\#}{\partial t}(t,s)=\frac{\partial w}{\partial s}(t,s)
\end{equation}
where $w$ is defined by
\begin{equation}
w(t,s)=\int_{\left\{\p(t,\cdot)\leq \p^\#(t,s)\right\}}\frac{\partial \p}{\partial t}.
\end{equation}

We then consider \eqref{Eq:Main}. For any $\tau \in \R_+$, we multiply the equation by $(u-\tau)_+$ and integrate by parts in space. We obtain in a classical way
\begin{equation}\label{Eq:Lis}
0\leq -\frac{\partial}{\partial \tau}\int_{\{u>\tau\}} |\n u|^2(t,\cdot)=\int_{\{u>\tau\}} \left(f-\frac{\partial u}{\partial t}(t,\cdot)\right).\end{equation} We write the repartition function of $u$ as $\mu$:
\begin{equation}\mu(t,\tau)=\operatorname{Vol}\left(\{u(t,\cdot)>\tau\}\right).\end{equation}By the isoperimetric inequality and the co-area formula, taking $S_n:=n\operatorname{Vol}(\mathbb B(0;1))^{\frac1n}$, we obtain, as in \cite{RakotosonMossino},
\begin{align}\label{Eq:Co}
S_n 
\mu(t,\tau)^{1-\frac1n}&\leq \left(-\frac{\partial }{\partial \tau}\int_{\{u(t,\cdot)>\tau\}}|\n u|\right)
\\&\leq \left(-\frac{\partial \mu}{\partial \tau}\right)^{\frac12}\left(-\frac{\partial}{\partial \tau}\int_{\{u(t,\cdot)>\tau\}}|\n u|^2\right)^{\frac12}.
\end{align}
This leads to
\begin{align}
S_n \mu(t,\tau)^{1-\frac1n}&\leq\left(-\frac{\partial \mu}{\partial \tau}\right)^{\frac12}\left(-\frac{\partial}{\partial \tau}\int_{\{u(t,\cdot)>\tau\}}|\n u|^2\right)^{\frac12}
\\&\leq \left(-\frac{\partial \mu}{\partial \tau}\right)^{\frac12}\left(\int_{\{u(t,\cdot)>\tau\}} \left(f-\frac{\partial u}{\partial t}(t,\cdot)\right)\right)^{\frac12}.
\end{align}
Hence,

\begin{equation}\label{Eq:Viv}
S_n^2\mu(t,\tau)^{2-\frac2n}\leq \left(-\frac{\partial \mu}{\partial \tau}\right)\int_{\{u(t,\cdot)>\tau\}} \left(f-\frac{\partial u}{\partial t}\right).\end{equation} Here we recall that $\int_{\{u(t,\cdot)>\tau\}} \left(f-\frac{\partial u}{\partial t}\right)\geq 0$ by \eqref{Eq:Lis}.

As is customary we use the Hardy-Littlewood inequality to obtain 
\begin{equation}\label{Eq:Hl}
\int_{\{u(t,\cdot)>\tau\}}f\leq \int_0^{\mu(t,\tau)}f^\dagger=:F(t,\mu(t,\tau)).\end{equation} 
Let us now define 
\begin{equation}k(t,\tau):=\int_0^\tau u^\dagger(t,s)ds\end{equation} and we obtain 

\begin{equation}\int_{\{u(t,\cdot)>\tau\}}\frac{\partial u}{\partial t}=\frac{\partial k}{\partial t}(t,\mu(t,\tau)).\end{equation}

As such, for some constant $c_n>0$,

\begin{equation}
1\leq S_n^{-2}\left({-\frac{\partial \mu}{\partial \tau}}\right)\mu(t,\tau)^{\frac{2}n-2}\left(F(t,\mu(t,\tau))-\frac{\partial k}{\partial t}(t,\mu(t,\tau))\right).
\end{equation}
Integrating this equation between $\tau_0$ and $\tau_1$ for any $0\leq \tau_0\leq \tau_1$ yields
\begin{equation}
\tau_1-\tau_0\leq S_n^{-2}\int_{\mu(t,\tau_0)}^{\mu(t,\tau_1)}s^{-2+\frac2n} \left(F(t,s)-\frac{\partial k}{\partial t}(t,s)\right)ds.\end{equation} We hence get in a classical way \cite{Mossino} the following differential inequality
\begin{equation}
-\frac{\partial u^\dagger}{\partial \tau}(t,\tau)=
-\frac{\partial^2 k}{\partial \tau^2}(t,\tau)\leq  S_n^{-2}\tau^{-2+\frac2n}\left(F(t,\tau)-\frac{\partial k}{\partial t}(t,\tau)\right).
\end{equation}
Let us now define \begin{equation}
k_{u^*}(t,\tau):=\int_0^\tau \left(u^*\right)^\dagger(t,\cdot).\end{equation}
 We recall that $u^*$ is the solution of \eqref{Eq:Main} associated with $f\equiv f^*$. Since $f$ is radially symmetric and decreasing, all the equalities in the above reasoning carried for $u$ hold for $u^*$ with equalities instead of inequalities and $k_{u^*}$ solves
\begin{equation}
\frac{\partial^2 k_{u^*}}{\partial \tau^2}+S_n^{-2}\tau^{-2+\frac2n}\frac{\partial k_{u^*}}{\partial t}= S_n^{-2}\tau^{-2+\frac2n}F(t,\tau).\end{equation}

Finally, we set $K=k-k_{u^*}$. From Equation \eqref{Eq:Simp}, we have, for any $t\in (0;T)$ and any $s\in (0;\operatorname{Vol}(\O))$,
\begin{equation} K(t,s)=0.\end{equation} 
Since $K\equiv 0$, every equality in the above reasoning must in fact be an equality. In particular, \eqref{Eq:Co} is an equality, and hence all the level-sets of $u$ are balls, which concludes the proof.
\end{proof}
\begin{remark}
It would be interesting to investigate whether or not using the quantitative isoperimetric inequality could lead to quantitative estimates, but it is not at this point clear how to do that. We refer to the Conclusion, Section \ref{SD}.\end{remark}
As is customary in the study of equality cases in Talenti-like inequalities, we need to check that the level sets are not just balls but rather concentric balls.
\begin{claim}\label{Cl:BouleConcentrique}
If $f$ solves \eqref{Eq:PvTd} then the level sets of the associated solution $u$ are concentric balls.\end{claim}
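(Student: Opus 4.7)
The plan is to upgrade the equality-throughout observation made at the end of the proof of Claim \ref{Cl:Balls} into equality in the Poly\'a--Szeg\"{o} inequality for $u(t,\cdot)$ at almost every fixed time, apply the rigidity result of Brothers--Ziemer \cite{Brothers1988} (see also \cite{Ferone2003}) to identify $u(t,\cdot)$ with a translate of $u^*(t,\cdot)$, and then exploit the Dirichlet boundary condition to pin the translation to zero.

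Concretely, I would first revisit the chain of inequalities used in the proof of Claim \ref{Cl:Balls}. Since $K = k - k_{u^*}\equiv 0$ by \eqref{Eq:Simp}, every intermediate inequality must in fact be an equality for almost every $(t,\tau)$. Equality in the isoperimetric step is what gave Claim \ref{Cl:Balls}; equality in the Cauchy--Schwarz step \eqref{Eq:Co} additionally forces $|\n u(t,\cdot)|$ to be constant on each level set $\{u(t,\cdot)=\tau\}$. Combining these two facts with the equimeasurability provided by Claim \ref{Cl:RM1} and inserting them into the standard co-area derivation of the Poly\'a--Szeg\"{o} inequality yields, for a.e. $t\in(0;T)$, the equality
\begin{equation*}
\int_\O |\n u(t,\cdot)|^2 \,=\, \int_\O |\n u^*(t,\cdot)|^2.
\end{equation*}

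Next, I would invoke Brothers--Ziemer's rigidity theorem, which asserts that such equality, combined with the negligibility of the critical set of the rearrangement, forces $u(t,\cdot)$ to be a translate of $u^*(t,\cdot)$. Proposition \ref{Pr:Radial} guarantees that $-\partial_r u^*(t,r)>0$ on $(0;R)$ for every $t\in(0;T)$, so the critical set of $u^*(t,\cdot)$ is at most the origin and hence has zero Lebesgue measure. Therefore for a.e. $t\in(0;T)$ there exists $c(t)\in\R^n$ with $u(t,x)=u^*(t,x-c(t))$. Finally, the Dirichlet condition $u(t,\cdot)=0$ on $\partial \B(0,R)$, together with the strict positivity $u^*(t,\cdot)>0$ in $\O$ (again Proposition \ref{Pr:Radial}), forces $\partial\B(c(t),R)=\partial\B(0,R)$, that is $c(t)=0$. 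Hence $u(t,\cdot)=u^*(t,\cdot)$ is radially symmetric about the origin, and in particular its super-level sets are concentric balls.

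The main obstacle I anticipate is the first step: the equalities in the proof of Claim \ref{Cl:Balls} are obtained after integrating in both $\tau$ and $t$, so some measure-theoretic care is needed to extract a pointwise-in-$t$ Poly\'a--Szeg\"{o} equality suitable for applying Brothers--Ziemer slice by slice. Once this is in place, the rigidity theorem and the boundary-condition argument close the claim essentially mechanically.
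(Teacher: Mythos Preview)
Your approach is correct but takes a genuinely different route from the paper. You extract equality in Poly\'a--Szeg\"{o} from the saturation of the Talenti-type chain and then invoke the Brothers--Ziemer rigidity theorem, pinning the centre via the Dirichlet condition. The paper instead follows a duality argument in the spirit of \cite{Kesavan1988}: it introduces two auxiliary backward heat equations (one with source $-1$, one with source $-\phi_1$, the first Dirichlet eigenfunction), integrates by parts against $u$, and uses Claim~\ref{Cl:RM1} together with the equality case in Hardy--Littlewood (applied level-set by level-set to these radially decreasing test functions) to force first $\int_{\mathbb B(0;r)}f=\int_{\mathbb B(0;r)}f^*$ for every $r$, and then $\int_{\mathbb B(0;r)}u=\int_{\mathbb B(0;r)}u^\#$ for every $r$, whence $u=u^\#$. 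Your argument is conceptually cleaner and more geometric, but it imports a deep black box (Brothers--Ziemer) and requires the measure-theoretic bookkeeping you flag in order to get the pointwise-in-$t$ Poly\'a--Szeg\"{o} equality; the paper's argument is more self-contained and stays entirely within elementary rearrangement inequalities and integration by parts, at the cost of a less transparent choice of auxiliary test functions. Both are valid; the paper's method has the advantage of avoiding any appeal to the fine structure of the critical set of $u^*$, while yours would adapt more readily to settings where suitable radially decreasing test functions are not as easy to manufacture.
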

\begin{proof}[Proof of Claim \ref{Cl:BouleConcentrique}]
The core idea of the proof is similar to \cite{Kesavan1988}. Let us first consider the solution $w$ of 
\begin{equation}\label{Eq:w}
\begin{cases}
\frac{\partial w}{\partial t}+\Delta w=-1\text{ in }(0;T)\times \O\,, 
\\ w=0\text{ on }(0;T)\times \O\,, 
\\ w(T,\cdot)=0.
\end{cases}\end{equation}
It follows from the same arguments as in the proof of Proposition \ref{Pr:Radial} that $w$ is radially symmetric and decreasing (for any $t<T$),  and so we obtain by the Hardy-Littlewood inequality that for almost every $t\in (0;T)$,
\begin{equation}
\int_\O fw \leq \int_\O f^\#w=\int_\O f^*w.\end{equation}

However multiplying Equation \eqref{Eq:w} by $u$ and integrating by parts both in time and space yields
\begin{align*}
\iint_\OT fw&=\iint_\OT \left(\frac{\partial u}{\partial t}-\Delta u\right)w
\\&=-\int_\O w u^0-\iint_\OT u\left(\frac{\partial w}{\partial t}+\Delta w\right)
\\&=-\int_\O wu^0+\iint_\OT u
\\&=-\int_\O wu^0+\iint_\OT u^* \text{  because of Claim \ref{Cl:RM1}}
\\&=\iint_\OT f^* w \text{ by the same computations with $u^*$ instead of $u$}.
\end{align*}
However, and since $w$ is radially symmetric and increasing, the Hardy-Littlewood inequality implies that for almost every $t\in (0;T)$ and almost every $\tau$ we have 
\begin{equation}\label{Eq:Kla}
 \int_{\{w(t,\cdot)>\tau\}} f\leq  \int_{\{w(t,\cdot)>\tau\}} f^\#.\end{equation} Hence it follows that \eqref{Eq:Kla} must be an equality for almost every $t$.
Thus since for almost every $t$ the function $w$ is symmetric and radially decreasing we get 
\begin{equation}\label{Eq:Ke}
\forall r \in (0;R)\,, \int_{\mathbb B(0;r)} f=\int_{\mathbb B(0;r)} f^*.\end{equation}

For the final step, let $\phi_1$ be the first Dirichlet eigenvalue of the laplacian in $\O$. It is standard to see that $\phi_1$ is radially symmetric and decreasing. Introduce the solution $\phi$ of 
\begin{equation}\label{Eq:phi}
\begin{cases}
\frac{\partial \phi}{\partial t}+\Delta \phi=-\phi_1\text{ in }(0;T)\times \O\,, 
\\ \phi=0\text{ on }(0;T)\times \O\,, 
\\ \phi(T,\cdot)=0.
\end{cases}\end{equation} The function $\phi$ is radially symmetric and decreasing as well for any $t<T$. As a consequence, all level-sets of $\phi(t,\cdot)$ are level-sets of $w(t,\cdot)$ and conversely, from which we deduce that, for almost every $t\in (0;T)$ and almost every $\tau$
\begin{equation}\int_{\{\phi(t,\cdot)>\tau\}}f=\int_{\{\phi(t,\cdot)>\tau\}}f^\#=\int_{\{\phi(t,\cdot)>\tau\}}f^*.\end{equation} This gives in turn
\begin{equation}\int_\O f\phi(t,\cdot)=\int_\O f^* \phi(t,\cdot).\end{equation}

 Multiplying \eqref{Eq:phi}  by $u$ and integrating by parts gives in the same way
\begin{equation}
\iint_\OT u\phi_1=\iint_\OT f\phi=\iint_\OT f^*\phi= \iint_\OT u^*\phi_1=\iint_\OT u^\# \phi_1.
\end{equation}
The last equality comes from \eqref{Eq:Simp}.

Invoking the Hardy-Littlewood inequality we obtain in the same fashion that for almost every $t\in (0;T)$
\begin{equation}
\forall r\in (0;R)\,, \int_{\mathbb B(0;r)} u=\int_{\mathbb B(0;r)} u^\#.\end{equation} It follows that $u=u^\#$ so that the conclusion is reached.

\end{proof}

\end{proof}

\section{Proof of Theorem \ref{Th:Td}}\label{Se:Td}
\subsection{Plan of the proof and heuristics}
This theorem relies on the following fact: assuming that we have a competitor $f$, to be compared with $f^*$, and defining, for every $t\in [0;T]$, 
\begin{equation}\delta(t):=\Vert f(t,\cdot)-f^*\Vert_{L^1(\O)}^2\end{equation} we can set
\begin{equation}\label{Eq:MTDelta}\mathcal M_T(\O,\delta)=\left\{g\in \mathcal M_T(\O)\,, \text{ for a.e. }t\in [0;T]\,, \Vert g(t,\cdot)-f^*\Vert_{L^1(\O)}=\delta(t)\right\}\end{equation} and replace $f$ with the solution $f_\delta^*$ of 

\begin{equation}\label{Eq:DeltaTD}
\max_{f\in \mathcal M_T(\O,\delta)}\mathcal J_T^\e(f).
\end{equation}
That such a solution exists follows by the same argument as in Lemma \ref{Le:ExistenceDelta} below (see the proof in Appendix \ref{Ap:Technical}) but we can actually prove (and this is the part that is specific to $\O$ being a centred ball) that the solutions to \eqref{Eq:DeltaTD}  admits the following explicit description: let, for any $\overline \delta>0$, $\A_{\overline\delta}$ be defined, in radial coordinates, as
\begin{equation}
\A_{\overline\delta}=\{r<r^*-r_{\overline\delta}^-\}\sqcup \{r^*<r<r^*+r_{\overline\delta}^+\}\end{equation} where $r_{\overline\delta}^-,r_{\overline\delta}^+$ are the unique parameters such that 

\begin{equation}\operatorname{Vol}(\A_{\overline \delta})=V_0\,, \operatorname{Vol}\left(\A_{\overline \delta}\Delta \B^*\right)=\overline \delta.\end{equation} Then we will show (Proposition \ref{Pr:GoodGuys}) 
\begin{equation}\label{Eq:Fdelta}f_\delta:t\mapsto \mathds 1_{\mathbb A_{\delta(t)}}\end{equation} is a solution of \eqref{Eq:DeltaTD}. Throughout the rest of this introduction to the proof, we keep the notation $f_\delta$ for this function.

Let us formally assume that
\begin{equation}\int_0^T \delta(t)dt\ll 1\end{equation} and define, for any $\xi\in (0;1)$, $p_{\e,\xi}$ the adjoint state associated with $f(t)=f^*+\xi \left(f_\delta-f^*\right)$. By parabolic regularity, $p_{\e,\xi}$ should be a non-increasing function of $r$ since the adjoint state $p^*_\e$ associated to $f^*$ is decreasing. By the mean-value theorem, there exists $\xi \in [0;1]$ such that 
\begin{equation}
\mathcal J_T^\e(f_\delta)-\mathcal J_T^\e(f^*)=\iint_\OT p_{\e,\xi}\left(f_\delta-f^*\right). \end{equation} A natural step is then to try and apply the quantitative bathtub principle to this quantity: since $p_{\e,\xi}$ is a radially symmetric, non-increasing function of $r$, then for any $t\in (0;T)$, $f^*$ is the only solution of 
\begin{equation}
\sup_{f \in \overline{\mathcal M}(\O)}\int_\O fp_{\e,\xi}(t,\cdot).
\end{equation}
The hope is then to prove that there exists a constant $C>0$ such that for any $t\in (0;T)$ there holds
\begin{equation}\forall f \in \overline{\mathcal M}(\O)\,, \int_\O (f-f^*)p_{\e,\xi}\leq- C\left(-\frac{\partial p_{\e,\xi}}{\partial r}\right)(t,r^*)\Vert f(t,\cdot)-f^*\Vert_{L^1(\O)}^2.\end{equation}
However, the existence of such a uniform constant relies, in a crucial way, on $\e$: when $\e>0$, it is possible while, when $\e=0$, other difficulties may arise. The key difficulty is that when $\e>0$ we can guarantee that 
\begin{equation}\sup_{t\in [0;T]}\frac{\partial p_\e^*}{\partial r}(t,r^*)<0\end{equation} while for $\e=0$ we can only guarantee 

\begin{equation}\forall \tau>0\,, \exists \alpha(\tau)>0\,, \sup_{t\in [0;T-\tau]}\frac{\partial p_\e^*}{\partial r}(t,r^*)\leq-\alpha(\tau).\end{equation} 

To give a synthetic presentation, we isolate the main tool of this proof in the following paragraph.

\subsection{Uniform quantitative bathtub principle}

\begin{proposition}\label{Pr:BathtubTD}
Let  $\beta>0$ and consider a family of function $\{p_i\}_{i\in I}\in \mathscr C^{1,\beta}(\O)$ such that:
\begin{enumerate}
\item There exists $M>0$ such that 
\begin{equation}\label{Eq:HolderUniforme}
\sup_{i\in I}\Vert p_i\Vert_{\mathscr C^{1,\beta}}\leq M.\end{equation}
\item For any $i\in I$, $p_i$ is radially symmetric.  Furthermore, there exists $\alpha>0$ such that, for any $r\in [0;r^*]$, 
\begin{equation}\label{Eq:Mack}\forall i \in I\,, p_i(r)-p_i(r^*)\geq \alpha |r-r^*|.\end{equation} We also assume that for any $i\in I$, $p_i$ is decreasing in $(r^*;R)$. In particular, the unique level set of $p_i$ of volume $V_0$ is $\mathbb B(0;r^*)$: there exists $c_i$ such that 
\begin{equation}\B(0;r^*)=\{p_i>c_i\}\,, \partial \B(0;r^*)=\{p_i=c_i\}.\end{equation} This in particular ensures that the minimum of $p_i$ in $\mathbb B(0;r^*)$ is only achieved on $\partial \B(0;r^*)$.  As another consequence, for this constant $\alpha>0$,  we have
\begin{equation}\label{Eq:DerivUniforme}
\forall i\in I\,, -\frac{\partial p_i}{\partial r}(r^*)\geq \alpha>0.
\end{equation}
\end{enumerate}
Then there exists a constant $\omega>0$ such that
\begin{equation}\label{Eq:BathtubTD}
\forall f \in \mathcal M(\O)\,, \forall i\in I\,, \int_\O p_i(f^*-f)\geq \omega\left(-\frac{\partial p_i}{\partial r}(r^*)\right)\Vert f-f^*\Vert_{L^1(\O)}^2.
\end{equation}
\end{proposition}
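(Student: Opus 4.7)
My plan is to combine a decomposition of the integrand with a one-sided bathtub principle and a Taylor expansion of $p_i$ near $r^*$, the point being that the Taylor expansion naturally produces the prefactor $\bigl(-\tfrac{\partial p_i}{\partial r}\bigr)(r^*)$. First I would set $c_i := p_i(r^*)$ and use the zero-mean property $\int_\Omega (f^*-f) = 0$ to rewrite
\[
\int_\Omega p_i(f^* - f) \;=\; \int_{\mathbb{B}^*}(p_i - c_i)(1-f)\,dx \;+\; \int_{\Omega\setminus\mathbb{B}^*}(c_i - p_i)\,f\,dx,
\]
in which both integrands are pointwise nonnegative by the level-set characterisation $\mathbb{B}^* = \{p_i > c_i\}$. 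Setting $m := \tfrac12\|f-f^*\|_{L^1(\Omega)}$, the $L^1$ constraints force $\int_{\mathbb{B}^*}(1-f) = m = \int_{\Omega\setminus\mathbb{B}^*} f$. Keeping only the (nonnegative) outer term and applying the bathtub principle to $(c_i - p_i)|_{\Omega\setminus\mathbb{B}^*}$, which is radially increasing because $p_i$ is radially decreasing on $(r^*,R)$, the minimum of $\int_{\Omega\setminus\mathbb{B}^*}(c_i - p_i)f$ over $\{0\le f\le 1,\,\int f = m\}$ is achieved by $f = \mathds{1}_{\{r^* < r < r^* + r^+\}}$, with $r^+$ uniquely determined by $\sigma_{n-1}\int_{r^*}^{r^* + r^+} s^{n-1}\,ds = m$ and $\sigma_{n-1} := |\mathbb{S}^{n-1}|$. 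This yields
\[
\int_\Omega p_i(f^* - f) \;\geq\; \sigma_{n-1}\int_{r^*}^{r^* + r^+}\bigl(c_i - p_i(r)\bigr)\, r^{n-1}\,dr.
\]

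For the Taylor step, the uniform $\mathscr C^{1,\beta}$ bound \eqref{Eq:HolderUniforme} gives $|p_i'(r) - p_i'(r^*)| \leq M|r-r^*|^\beta$, so setting $\rho_* := (\alpha/(2M))^{1/\beta}$ (which is $i$-independent thanks to \eqref{Eq:DerivUniforme}) we have
\[
c_i - p_i(r) \;\geq\; \tfrac12\bigl(-p_i'(r^*)\bigr)(r - r^*), \qquad r \in [r^*,\, r^* + \rho_*].
\]
I would then split into two regimes. In the small-$m$ regime ($r^+ \leq \rho_*$), the Taylor bound applies on the whole interval of integration and a direct computation yields $\tfrac12(-p_i'(r^*))\sigma_{n-1}(r^*)^{n-1}(r^+)^2/2$ as a lower bound; combined with the elementary inequality $r^+ \geq m/(\sigma_{n-1} R^{n-1})$, this is $\geq \omega_1(-p_i'(r^*))m^2$ for a constant $\omega_1$ depending only on $n,r^*,R$. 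In the large-$m$ regime ($r^+ > \rho_*$), restricting the extremal integral to $[r^*,\, r^* + \rho_*]$ (a subset) and applying the Taylor bound gives a positive lower bound $C(-p_i'(r^*))\rho_*^2$ independent of $m$; using the crude bound $m \leq m_{\max} := \min(V_0, \operatorname{Vol}(\Omega) - V_0)$ reinterprets this as $\omega_2(-p_i'(r^*))m^2$. Choosing $\omega := \min(\omega_1,\omega_2)/4$ absorbs the identity $\|f - f^*\|_{L^1}^2 = 4m^2$ and produces \eqref{Eq:BathtubTD}.

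The main technical point I expect to handle with care is the uniformity in $i$: the threshold $\rho_*$ depends only on $\alpha$, $\beta$, $M$, all intrinsic constants of the hypotheses, so all downstream constants are $i$-independent. The hypothesis \eqref{Eq:Mack} is not strictly needed for the outer-term argument sketched above (which is why the derivative $-p_i'(r^*)$ appears rather than $\alpha$), but it would provide a symmetric inner-term estimate (with $\alpha$ in place of $-p_i'(r^*)$) if one wished to exploit both contributions of the decomposition at once.
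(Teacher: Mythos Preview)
Your argument is correct and, in fact, more direct than the paper's. The paper proceeds by contradiction: it considers a minimising sequence $(p_k,f_k)$ for the ratio $\mathscr G(p,f):=\int_\O p(f^*-f)\big/\bigl((-\partial_r p(r^*))\Vert f-f^*\Vert_{L^1}^2\bigr)$, uses the uniform $\mathscr C^{1,\beta}$ bound to extract a limit $p_\infty$, and then splits according to whether $\delta_k:=\Vert f_k-f^*\Vert_{L^1}$ stays bounded away from zero (handled by a soft compactness argument) or tends to zero. In the latter case the paper replaces each $f_k$ by the two-sided annular competitor $\mathds 1_{\A_{\delta_k}}$ (which preserves both the volume and the $L^1$ distance to $f^*$) and proves a direct Taylor-type estimate for these annuli, in which the $p(r^*)$ contribution from the inner and outer shells cancels exactly because $\int_\O h_\delta=0$.

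Your approach avoids the contradiction framework entirely: the decomposition $\int_\O p_i(f^*-f)=\int_{\B^*}(p_i-c_i)(1-f)+\int_{\O\setminus\B^*}(c_i-p_i)f$ into two nonnegative pieces lets you discard the inner term and run the bathtub principle on the outer shell alone, so no cancellation needs to be tracked. The small-$m$/large-$m$ dichotomy then plays the role that the paper's compactness dichotomy plays, but with explicit constants. What you gain is a cleaner, fully constructive proof with constants that are in principle computable; what the paper's route offers is a template (minimising sequence plus compactness) that transfers more readily to situations where the extremal competitor cannot be written down explicitly. Your closing remark that \eqref{Eq:Mack} is only used through its consequence \eqref{Eq:DerivUniforme} is accurate for your argument; the paper uses the full inner-interval bound when passing to the limit $p_\infty$ to ensure $f^*$ remains the unique bathtub maximiser.
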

\begin{proof}[Proof of Proposition \ref{Pr:BathtubTD}]
Let us write $\mathscr T:=\{p_i\}_{i\in I}$. We first note that the assumption ensure that for any $p\in \mathscr T$, $f^*$ is the only solution of the problem 
\begin{equation}
\sup_{f\in \overline{\mathcal M}(\O)}\int_\O fp.\end{equation}

We define 
\begin{equation}
\mathscr G: \mathscr T\times \left(\overline{\mathcal M}(\O)\backslash \{f^*\}\right) \ni (p,f)\mapsto \frac{ \int_\O p(f^*-f)}{-\frac{\partial p_i}{\partial r}(r^*)\Vert f-f^*\Vert_{L^1(\O)}^2}
\end{equation}
and obviously proving \eqref{Eq:BathtubTD} boils down to proving 
\begin{equation}\inf_{\mathscr T\times \left(\overline{\mathcal M}(\O)\backslash \{f^*\}\right)}\mathscr G>0.\end{equation}
Let us consider a minimising sequence $\{p_k\,, f_k\}\in\left( \mathscr T\times \left(\overline{\mathcal M}(\O)\backslash \{f^*\}\right)\right)^\N$. Let us fix $\beta'\in (0;\beta)$. By \eqref{Eq:HolderUniforme} there exists $p_\infty\in \mathscr C^{1,\beta'}(\O)$ radially symmetric such that \begin{equation}
p_k\underset{k\to \infty}\rightarrow p_\infty\text{ in }\mathscr C^{1,\beta'}(\O)\,, \end{equation} and as a consequence we have 
\begin{equation}\Vert  p_\infty\Vert_{\mathscr C^{1,\beta'}}=\lim_{k\to \infty} \Vert p_k\Vert_{\mathscr C^{1,\beta'}}\leq M\end{equation}
and \eqref{Eq:DerivUniforme} holds for $p_\infty$. In the same way, and passing to the limit in \eqref{Eq:Mack}, $f^*$ is the only solution of 
\begin{equation}\sup_{f \in \overline{\mathcal M}(\O)}\int_\O p_\infty f.\end{equation}

Up to a subsequence we also have that there exists $f_\infty\in \overline{\mathcal M}(\O)$ such that
\begin{equation}f_k\underset{k\to \infty}\rightarrow f_\infty\text{ weakly in $L^\infty-*$}.\end{equation} 

We distinguish between two cases related to the sequence $\{\delta_k\}_{k\in\N}$ defined by 
\begin{equation}
\forall k \in \N\,, \delta_k:=\Vert f_k-f^*\Vert_{L^1(\O)}.\end{equation}  The first case corresponds to the case where, up to a subsequence, 
\begin{equation}
\delta_k\underset{k\to \infty}\rightarrow \delta_\infty>0.
\end{equation}
In that case, we define 
\begin{equation}
\overline{\mathcal M}_{>\delta_\infty}(\O):=\left\{f\in \overline{\mathcal M}(\O)\,, \Vert f-f^*\Vert_{L^1(\O)}\geq \frac{\delta_\infty}2\right\}
\end{equation}
Following the same arguments as in \cite[Proposition 22]{MRB2020} we can see that the class $\overline{\mathcal M}_{>\delta_\infty}(\O)$ is closed under the weak $L^\infty-*$ convergence. Hence, it follows that 
\begin{equation}\Vert f_\infty-f^*\Vert_{L^1(\O)}\geq \frac{\delta_\infty}2.\end{equation} This implies that 
\begin{equation}\lim_{k\to \infty}\mathscr G(p_k,f_k)\geq \frac4{\delta_\infty^2}\mathscr G(p_\infty,f_\infty)>0\end{equation} since $f^*$ is the only maximiser of $f\mapsto \int_\O f p_\infty$ in $\overline{\mathcal M}(\O)$.

The second case is the difficult one. We henceforth work under the assumption that 
\begin{equation}
\delta_k \underset{k\to \infty}\rightarrow 0.\end{equation}

We introduce the sequence of variational problem
\begin{equation}
\forall k \in \N\,, \sup_{f\in \overline{\mathcal M}(\O)\,, \Vert f-f^*\Vert_{L^1(\O)}=\delta_k}\int_{\O} p_kf.
\end{equation}
From the same arguments as in \cite[Proposition 22]{MRB2020} there exists a solution to this variational problem. Furthermore since $p_k^\#=p_k$ the function $\mathds 1_{\A_{\delta_k}}$ is a solution of this problem, where $\mathbb A_{\delta_k}$ is defined, in radial coordinates
\begin{equation}
\overline\A_{\delta_k}=\{r<r^*-r_{\delta_k}^-\}\sqcup \{r^*<r<r^*+r_{\delta_k}^+\}\end{equation} and $r_{\delta_k}^-,r_{\delta_k}^+$ are the unique parameters such that 
\begin{equation}\operatorname{Vol}(\mathbb A_{\delta_k})=V_0\,, \operatorname{Vol}(\mathbb A_{\delta_k}\Delta \B^*)=\delta_k.\end{equation}

Hence we assume that 
\begin{equation}f_k=\mathds 1_{\mathbb A_{\delta_k}}.\end{equation}

For a general $\overline \delta>0$, we define $\mathbb A_{\overline \delta}$ in the same manner, that is, 
\begin{equation}
\A_{\overline\delta}=\{r<r^*-r_{\overline\delta}^-\}\sqcup \{r^*<r<r^*+r_{\overline\delta}^+\}\end{equation} where $r_{\overline\delta}^-,r_{\overline\delta}^+$ are the unique parameters such that 

\begin{equation}\operatorname{Vol}(\A_{\overline \delta})=V_0\,, \operatorname{Vol}\left(\A_{\overline \delta}\Delta \B^*\right)=\overline \delta.\end{equation}

 We also recall that we have, for the same exponent $\beta'>0$, 
\begin{equation}\label{Eq:HolderFaible}
\forall i\in I\,, \Vert  p_i\Vert_{\mathscr C^{1,\beta'}}\leq M\end{equation} as this will be a crucial point. Let us then prove the following claim:
\begin{claim}\label{Cl:Monster}
There exists $\delta_1>0$ and $\omega_0>0$ such that for any $0\leq \delta\leq \delta_1$ there holds
\begin{equation}
\forall p\in \mathscr T\cup \{p_\infty\}\,, 
\int_\O p(f^*-\mathds 1_{\A_\delta})\geq \omega_0\left(-\frac{\partial p}{\partial r}(r^*)\right)\Vert \mathds 1_{\A_\delta}-f^*\Vert_{L^1(\O)}^2.
\end{equation}
\end{claim}
Assuming this Claim holds it follows that for any $k$ large enough we have 
\begin{equation}\mathscr G(p_k,f_k)\geq \omega_0,\end{equation} hence leading to the required contradiction. It thus only remains to prove Claim \ref{Cl:Monster}:
\begin{proof}[Proof of Claim \ref{Cl:Monster}]
Let us define, for any $\delta>0$, 
\begin{equation}
h_\delta:=f^*-\mathds 1_{\A_\delta}=\mathds 1_{\{r^*-r_\delta^-<r<r^*\}}-\mathds 1_{\{r^*<r<r^*+r_\delta^+\}}.
\end{equation}
The quantity we want to bound from below is 
\begin{equation}\int_\O h_\delta p_i.\end{equation}
First of all, explicit computations show that there exists a constant $c_0=c_0(d,r^*)$ such that 

\begin{equation}\label{Eq:Rad}
r_\delta^+\,, r_\delta^-\underset{\delta \to 0}\sim c_0 \delta.
\end{equation}
We now write \eqref{Eq:Rad} in radial coordinates and obtain for any $p\in \mathscr T\cup\{p_\infty\}$
\begin{align*}
\frac1{(2\pi)^d}\int_\O h_\delta p=\int_{r^*-r_\delta^-}^{r^*}p(r)r^{n-1}{d}r-\int_{r^*}^{r^*+r_\delta^+}p(r)r^{n-1}{d}r.
\end{align*}

Let us first notice that from \eqref{Eq:HolderFaible} and \eqref{Eq:DerivUniforme}, there exists $\overline \e>0$ such that, for any $\delta\in (0;\overline \e)$, 
\begin{equation}\label{Eq:Burr}
\forall p\in \mathscr T\cup\{p_\infty\}\,, \inf_{(r^*-\delta;r^*+\delta)}\left(-\frac{\partial p}{\partial r}\right)\geq -\frac12\frac{\partial p}{\partial r}(r^*).
\end{equation}
We now have thanks to the mean value theorem, that for any $r\in (0;R)$, there exists $y_{r-r^*}\in (r;r^*)$ or $(r^*;r)$ such that
\begin{equation} p(r)=p(r^*)+p'(y_{r-r^*})(r-r^*).\end{equation}
As a consequence
\begin{align}
\frac1{(2\pi)^n}\int_\O h_\delta p&=p(r^*)\left(\int_{r^*-r_\delta^-}^{r^*}r^{n-1}{d}r-\int_{r^*}^{r^*+r_\delta^+}r^{n-1}{d}r\right)
\\&+\left(\int_{r^*-r_\delta^-}^{r^*}r^{n-1}|r-r^*|(-p'(y_{r-r^*})){d}r+\int_{r^*}^{r^*+r_\delta^+}r^{n-1}|r-r^*|(-p'(y_{r-r^*})){d}r\right)
\\&\label{I1}\geq p(r^*)\left(\int_{r^*-r_\delta^-}^{r^*}r^{n-1}{d}r-\int_{r^*}^{r^*+r_\delta^+}r^{n-1}{d}r\right)
\\&\label{I2}-\frac12\frac{\partial p}{\partial r}(r^*)\left(\int_{r^*-r_\delta^-}^{r^*}r^{n-1}|r-r^*|{d}r+\int_{r^*}^{r^*+r_\delta^+}r^{n-1}|r-r^*|{d}r\right)\text{ by \eqref{Eq:Burr}}
\end{align}The right hand side of \eqref{I1} is 0 because 
\begin{equation}\int_{r^*-r_\delta^-}^{r^*}r^{n-1}{d}r-\int_{r^*}^{r^*+r_\delta^+}r^{n-1}{d}r=\frac1{(2\pi)^n}\int_\O h_\delta=0.\end{equation} Furthermore by explicit computations we obtain 

\begin{align*}
\int_{r^*-r_\delta^-}^{r^*}r^{n-1}|r-r^*|dr\underset{\delta \to 0}\sim C\delta^2,
\end{align*}
and in the same way
\begin{equation}
\int_{r^*}^{r^*+r_\delta^+}r^{n-1}|r-r^*|dr\underset{\delta \to 0}\sim C\delta^2.
\end{equation}
The conclusion follows immediately.
\end{proof}

This concludes the proof of the Proposition.
\end{proof}

We then present, in the following paragraph, the proof of the aforementioned Proposition \ref{Pr:GoodGuys} that deals with the characterisation of solutions of a penalised problem.

\subsection{Characterisation of the solutions of an auxiliary problem}
Let us consider a function $\delta:[0;T]\rightarrow [0;\operatorname{Vol}(\O)]$ and the class $\mathcal M_T(\O,\delta)$ defined in \eqref{Eq:MTDelta}, as well as the function $f_\delta$ defined by \eqref{Eq:Fdelta}.
\begin{proposition}\label{Pr:GoodGuys}
For any $\e> 0$ and any positive function $\delta:[0;T]\to [0;\operatorname{Vol}(\O)]$, $f_\delta$ is a solution of the variational problem
\begin{equation}
\max_{g\in \mathcal M_T(\O,\delta)}\mathcal J_T^\e(g).
\end{equation}
\end{proposition}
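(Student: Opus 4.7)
The plan is to combine a convex-analysis reduction with a two-stage rearrangement argument, exploiting the ball-centered geometry.

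First, I would reformulate the constraint. Since $f^* = \mathds 1_{\B^*}$ and $0 \le g \le 1$ for every $g \in \overline{\mathcal{M}}(\O)$, an elementary computation gives
\[
\|g - f^*\|_{L^1(\O)} = \int_{\B^*}(1 - g) + \int_{\O \setminus \B^*} g = 2\bigl(V_0 - \int_{\B^*} g\bigr).
\]
Hence the constraint $\|g(t, \cdot) - f^*\|_{L^1} = \delta(t)$ is equivalent to the pair of linear integral constraints $\int_{\B^*} g(t, \cdot) = V_0 - \delta(t)/2$ and $\int_{\O \setminus \B^*} g(t, \cdot) = \delta(t)/2$. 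Under this reformulation, $\mathcal{M}_T(\O, \delta)$ is convex and weakly-$*$ compact in $L^\infty(\OT)$. By the strict convexity of $\mathcal{J}_T^\e$ (Proposition \ref{Pr:Convexity}) and Bauer's maximum principle, any solution of the variational problem is an extreme point of $\mathcal{M}_T(\O, \delta)$; a standard bang-bang / Liapunov convexity argument identifies these extreme points as characteristic functions $g(t, \cdot) = \mathds 1_{E_1(t) \sqcup E_2(t)}$ with $E_1(t) \subset \B^*$ of volume $V_0 - \delta(t)/2$ and $E_2(t) \subset \O \setminus \B^*$ of volume $\delta(t)/2$.

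Having reduced to characteristic competitors, I would compare them to $f_\delta$ via a two-stage rearrangement: (i) Schwarz-rearrange $E_1(t)$ inside $\B^*$ into the centered ball $\mathbb{B}(0; r^* - r_{\delta(t)}^-)$; (ii) rearrange $E_2(t)$ in $\O \setminus \B^*$ into the annulus $\{r^* < r < r^* + r_{\delta(t)}^+\}$ adjacent to $\partial \B^*$, pushing its mass toward $\partial \B^*$ (a Schwarz-type rearrangement centered at $\partial \B^*$ from outside rather than at the origin). The result of stages (i)--(ii) is exactly $f_\delta$. For stage (i) I would invoke a parabolic Talenti--Alvino--Lions--Trombetti--Rakotoson--Mossino-type comparison, which applies since $u^*$ is radially symmetric and strictly decreasing by Proposition \ref{Pr:Radial}.

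To organize the comparison, I would use the decomposition $u_g = u^* + w$, where $w$ solves the heat equation with source $g - f^*$ and zero initial condition, together with the adjoint identity of Proposition \ref{Pr:Adjoint} to write
\[
\mathcal{J}_T^\e(g) - \mathcal{J}_T^\e(f^*) = \iint_\OT p_{\e,f^*}(g - f^*) + \tfrac12 \iint_\OT w^2 + \tfrac{\e}{2} \int_\O w(T, \cdot)^2.
\]
The linear term is pointwise in $t$ maximized by $g = f_\delta$ through the standard bathtub principle, because $p_{\e, f^*}$ is radially symmetric and strictly decreasing in $r$ (this follows from Proposition \ref{Pr:Radial} applied to the backward parabolic equation defining $p_{\e, f^*}$), and this is precisely the bathtub argument underlying Proposition \ref{Pr:BathtubTD}. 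The main obstacle will be handling the quadratic terms: stage (ii) amounts to an inner-boundary Schwarz symmetrization on the annulus $\O \setminus \B^*$, a setting for which no off-the-shelf parabolic rearrangement inequality seems to apply. I therefore expect the bulk of the technical work to consist of establishing a bespoke two-sided parabolic rearrangement principle adapted to the partition $\B^* \sqcup (\O \setminus \B^*)$, or alternatively a direct Green's-function analysis of the heat semigroup on $\O$.
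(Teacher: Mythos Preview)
Your proposal takes a genuinely different route from the paper, and the obstacle you flag (the quadratic remainder after splitting off the linear adjoint term) is real and, as far as I can see, not easily resolved along those lines. The paper avoids this obstacle entirely by never decomposing around $f^*$.

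The paper's argument is a direct re-run of the Talenti--Rakotoson--Mossino parabolic comparison used in the proof of Theorem~\ref{Th:Uniqueness}, with a single modification. Recall that in that proof one arrives at
\[
S_n^2\,\mu(t,\tau)^{2-\frac{2}{n}} \le \Bigl(-\tfrac{\partial\mu}{\partial\tau}\Bigr)\int_{\{u(t,\cdot)>\tau\}}\Bigl(g-\tfrac{\partial u}{\partial t}\Bigr),
\]
and then bounds $\int_{\{u>\tau\}} g$ via Hardy--Littlewood by a function of $\mu(t,\tau)$ alone. The paper observes that under the extra constraint $\Vert g(t,\cdot)-f^*\Vert_{L^1}=\delta(t)$ one may replace this bound by a \emph{penalised} Hardy--Littlewood bound: the maximum of $\int_\O \ell\,\overline g$ over $\overline g\in\overline{\mathcal M}(\O,\delta(t))$ is dominated by $\int_\O \mathds 1_{\mathbb A_{\delta(t)}}\ell^\#$. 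Feeding this into the differential inequality for $k(t,\tau)=\int_0^\tau u^\dagger(t,s)\,ds$ gives exactly the same inequality that $k_{u_{f_\delta}}$ satisfies with equality (since $f_\delta$ is radially symmetric). The comparison $u_g^\#\prec u_{f_\delta}$ then follows, and with it $\mathcal J_T^\e(g)\le\mathcal J_T^\e(f_\delta)$.

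The point is that the paper compares $u_g$ to $u_{f_\delta}$ \emph{directly}, not to $u^*$. Your decomposition $u_g=u^*+w$ forces you to control $\iint w^2$ and $\int w(T)^2$ separately, and these quadratic terms do not behave well under the inner/outer rearrangement you propose. By contrast, once the penalised Hardy--Littlewood step is in place, the rest of the argument is literally the same as in Section~\ref{Se:Uniq}. Your convexity/bang--bang reduction is harmless but unnecessary: the Talenti machinery handles all $g\in\mathcal M_T(\O,\delta)$ at once. What you should take from your own sketch is the observation (implicit in your two-stage bathtub step) that, for a radially non-increasing $\ell^\#$, the maximiser of $\int \ell^\# h$ over $h\in\overline{\mathcal M}(\O,\delta)$ is exactly $\mathds 1_{\mathbb A_\delta}$; that is precisely the penalised Hardy--Littlewood inequality the paper needs, and it is the only new ingredient.
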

\begin{proof}[Proof of Proposition \ref{Pr:GoodGuys}]
This is a straightforward adaptation of the proof of the parabolic isoperimetric inequality whose main steps were recalled in Section \ref{Se:Uniq}.  Let us consider a function $g\in \mathcal M_T(\O,\delta)$ and $u$ the associated solution of \eqref{Eq:Main}.  With the same notations as in Section \ref{Se:Uniq}, proof of Theorem \ref{Th:Uniqueness} we obtain
\begin{equation}
S_n^2\mu(t,\tau)^{2-\frac2n}\leq \left(-\frac{\partial \mu}{\partial \tau}\right)\int_{\{u(t,\cdot)>\tau\}} \left(g-\frac{\partial u}{\partial t}\right).\end{equation} 
However, by the Hardy-Littlewood inequality, if we define $G(t,\mu(t,\tau)):=\int_{0}^{\mu(t,\tau)} \mathds 1_{\A_{\delta(t)}}^\dagger$ we obtain 
\begin{equation}\int_{\{u(t,\cdot)>\tau\}}g\leq G(t,\mu(t,\tau)).\end{equation}
This is a penalised version of the Hardy-Littlewood inequality: it is indeed straightforward to see that, for any function $\overline g \in \overline{\mathcal M}(\O,\delta(t))=\{g\in \overline{\mathcal M}(\O)\,, \Vert g-f^*\Vert_{L^1(\O)}=\delta(t)\}$ and any measurable positive function $\ell$, there holds
\begin{equation}\int_\O \ell \overline g\leq \int_\O \mathds 1_{\mathbb A_{\delta(t)}}\ell^\#.\end{equation}As a consequence, for some constant $c_n>0$,

\begin{equation}
1\leq S_n^{-2}\left({-\frac{\partial \mu}{\partial \tau}}\right)\mu(t,\tau)^{\frac{2}n-2}\left(G(t,\mu(t,\tau))-\frac{\partial k}{\partial t}(t,\mu(t,\tau))\right).
\end{equation}
The rest of the proof follows along exactly the same lines.

\end{proof}

\subsection{Proof of Theorem \ref{Th:Td}}
In this subsection, we prove Theorem \ref{Th:Td} with a fixed, positive parameter $\e>0$. 
\begin{proof}[Proof of Theorem \ref{Th:Td}]
We argue by contradiction and assume that there exists a sequence $\{f_k\}_{k\in \N}\in(\mathcal M_T(\O)\backslash\{f^*\})^\N$ such that 
\begin{equation}\label{Eq:Contrad}\lim_{k\to \infty}\frac{\mathcal J_T^\e(f^*)-\mathcal J_T^\e(f_k)}{\int_0^T\left(-\frac{\partial p_\e^*}{\partial r}(t,r^*)\right)\Vert f_k(t,\cdot)-f^*\Vert_{L^1(\O)}^2}=0,\end{equation} where we recall that $p_\e^*$ is the solution of 
\begin{equation}\label{Eq:Adjointepsilon}
\begin{cases}
\frac{\partial p_\e^*}{\partial t}+\Delta p_\e^*=-u^*\text{ in }\OT\,, 
\\ p_\e^*(T,\cdot)=\e u^*(T,\cdot)\,, 
\\ p_\e^*(t,\cdot)=0\text{ on } (0;T)\times \partial \O.\end{cases}\end{equation}

In the same way, if $f\in \mathcal M_T(\O)$, $p_{\e,f}$ stands for the solution of \eqref{Eq:Adjointepsilon} with $u^*$ replaced by $u_{f}$. By Proposition \ref{Pr:Adjoint}, the derivative of $\mathcal J_T^\e$ at $f$ in a direction $h$ is given by 
\begin{equation}\label{Eq:DerivAdjointEpsilon}
\dot{\mathcal J}_T^\e(f)[h]=\iint_\OT h p_{\e,f}.\end{equation}

Let us then begin with the following Claim:

\begin{claim}\label{Cl:NDA}
For any $T,\e>0$ and any $y_0\in (0;r^*)$, there exists $\alpha(y_0;\e,T)$ such that 
\begin{equation}
\inf_{(0;T)\times(y_0;R)}\left(-\frac{\partial p_\e^*}{\partial r}(t,r)\right)\geq \alpha(y_0;\e,T)>0.\end{equation}\end{claim}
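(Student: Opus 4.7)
The plan is to reduce the claim to a parabolic strong minimum principle on the radial derivative of the adjoint state. First I would time-reverse by setting $\tilde p(t,\cdot) := p_\e^*(T-t,\cdot)$, which solves the \emph{forward} heat equation
\begin{equation*}
\partial_t \tilde p - \Delta \tilde p = u^*(T-t,\cdot)\text{ in }\OT,\quad \tilde p(0,\cdot)=\e u^*(T,\cdot),\quad \tilde p=0\text{ on }(0;T)\times\partial\O.
\end{equation*}
Since the source and initial data are non-negative, radial and non-increasing, $\tilde p$ is radial and non-negative, and is strictly positive in the interior of $[0;T]\times\O$ by the strong maximum principle (using that $u^*(T,\cdot)>0$ in $\O$, itself a consequence of the maximum principle applied to $u^*$). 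The Hopf lemma then yields $-\partial_r\tilde p(t,R)>0$ uniformly on $[0;T]$.

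I would then introduce $\tilde q:=-\partial_r\tilde p$, which coincides with $q(T-t,r)$, where $q:=-\partial_r p_\e^*$. Differentiating in $r$ the radial form of the PDE for $\tilde p$, exactly as in the proof of Proposition \ref{Pr:Radial}, gives
\begin{equation*}
\partial_t\tilde q-\partial_{rr}\tilde q-\frac{n-1}{r}\partial_r\tilde q+\frac{n-1}{r^2}\tilde q=-\partial_r u^*(T-t,r)\text{ in }(0;T)\times(0;R).
\end{equation*}
The right-hand side is \emph{strictly positive} by Proposition \ref{Pr:Radial}, and the parabolic data satisfy $\tilde q(t,0)=0$ (radial symmetry), $\tilde q(t,R)>0$ (the Hopf step above), and $\tilde q(0,r)=-\e\partial_r u^*(T,r)>0$ for $r\in(0;R)$ (Proposition \ref{Pr:Radial} at the terminal time, extending the interval as in its proof). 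A weak minimum principle---whose zero-order coefficient is non-negative---then yields $\tilde q\geq 0$.

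The core step is a strong minimum principle: if $\tilde q$ vanished at some $(t_0,r_0)\in[0;T]\times(0;R]$, the parabolic data would force $r_0<R$ and $t_0>0$, so $(t_0,r_0)$ would be a local minimum in space, with $t_0$ either interior in time or $t_0=T$. In either case $\partial_t\tilde q\leq 0$ (interior vanishing, or one-sided from the left), $\partial_{rr}\tilde q\geq 0$, $\partial_r\tilde q=0$ and $\tilde q=0$, so the left-hand side of the PDE for $\tilde q$ is $\leq 0$, contradicting the strict positivity of its right-hand side. Hence $\tilde q>0$ on $[0;T]\times(0;R]$, equivalently $q>0$ on $[0;T]\times(0;R]$.

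Finally I would conclude by compactness: parabolic regularity (Proposition \ref{Pr:Regularity}) makes $q$ continuous on the compact set $[0;T]\times[y_0;R]$, on which it is strictly positive, so its infimum is attained and positive, yielding $\alpha(y_0;\e,T)$. The main subtlety I expect is justifying strict positivity at the terminal (reversed, initial) time $t_0=T$, where a one-sided time derivative must be used in the minimum-principle step, and observing that the singular coefficient $(n-1)/r^2$, while blowing up at $r=0$, is harmless for our purposes since we work away from the origin.
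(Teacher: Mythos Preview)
Your approach is essentially identical to the paper's: time-reverse the adjoint to $\tilde p=p_\e^*(T-\cdot)$, differentiate radially, apply the maximum principle to $\tilde q=-\partial_r\tilde p$ (the paper's $-\Phi_\e^*$), and conclude by continuity and compactness. The only residual gap is at $t_0=T$: your pointwise contradiction requires the right-hand side $-\partial_r u^*(0,r_0)=-\partial_r u^0(r_0)$ to be \emph{strictly} positive, but hypothesis \eqref{Eq:A0} only makes $u^0$ non-increasing, so this quantity may vanish and no contradiction follows from the one-sided time derivative alone. The clean repair is to replace the pointwise argument by the full parabolic strong minimum principle (with non-negative zero-order coefficient $(n-1)/r^2$, bounded on $[y_0,R]$): once $\tilde q\geq 0$ is established, any interior zero---including at the top $t_0=T$---forces $\tilde q\equiv 0$ on its parabolic past, contradicting $\tilde q(0,\cdot)=-\e\,\partial_r u^*(T,\cdot)>0$.
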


\begin{proof}[Proof of Claim \ref{Cl:NDA}]
We define $q_\e^*(t,\cdot):=p_\e^*(T-t,\cdot)$. Since $u^*$ is radially symmetric, $q_\e$ is radially symmetric as well and satisfies, in radial coordinates,
\begin{equation}\label{Eq:Qespilon}
\begin{cases}
\frac{\partial q_\e^*}{\partial t}-\frac1{r^{n-1}}\frac{\partial}{\partial r}\left(r^{n-1}\frac{\partial q_\e^*}{\partial r}\right)=u^*(T-t,\cdot)\text{ in } (0;T)\times (0;R)\,, 
\\ q_\e^*(0,\cdot)=\e u^*(T,\cdot)\,, 
\\ q_\e^*(t,R)=\frac{\partial q_\e^*}{\partial r}(t,0)=0.
\end{cases}
\end{equation}

It follows from standard Schauder estimates \cite[Theorem 4.9, p.59]{Lieberman} and Proposition \ref{Pr:Regularity} that $q_\e^*\in \mathscr C^{1,\alpha}(\OT)$. Besides, since $u^*\geq 0$, we also have $q_\e\geq 0$ and, by the strong parabolic maximum principle, $q_\e^*>0\in \OT$.

 As a consequence of the Hopf Lemma and of the fact that $\frac{\partial u^*}{\partial r}(T,R)<0$, defining $\Phi_\e^*:=\frac{\partial q_\e^*}{\partial r}$, we obtain
 \begin{equation}
 \forall t \in [0;T]\, , \Phi_\e^*(t,R)<0.\end{equation}
 From Proposition \ref{Pr:Radial}, $\Phi_\e^*(0,\cdot)\leq 0\,, <0$ in $(0;R]$. Differentiating \eqref{Eq:Qespilon}, $\Phi_\e^*$ thus solves
 \begin{equation}\label{Eq:PhiEpsilon}
 \begin{cases}
\frac{\partial \Phi_\e^*}{\partial t}-\frac1{r{^{n-1}}}\frac{\partial}{\partial r}\left(r^{n-1}\frac{\partial \Phi_\e^*}{\partial r}\right)=\frac{\partial u^*(T-t,\cdot)}{\partial r}-\frac{(n-1)\Phi_\e^*}{r^2}\text{ in } (0;T)\times (0;R)\,, 
\\ \Phi_\e^*(0,r)<0\text{ if }r>0\,, \Phi_\e^*(t,0)=0\,, 
\\ \Phi_\e^*(t,R)<0.
 \end{cases}
 \end{equation}
Since (Proposition \ref{Pr:Radial}) $\frac{\partial u^*}{\partial r}<0$ for almost every $t,r>0$, $\Phi_\e^*$ solves, in $\OT$, the differential inequality
\begin{equation}
\frac{\partial \Phi_\e^*}{\partial t}-\frac1{r^{n-1}}\frac{\partial}{\partial r}\left(r^{n-1}\frac{\partial \Phi_\e^*}{\partial r}\right)<-\frac{(n-1)\Phi_\e^*}{r^2}\text{ in } (0;T)\times (0;R).
\end{equation} We can then apply the maximum principle, as was done in Proposition \ref{Pr:Radial}, to ensure that for any $t\in (0;T]$ and any $r>y_0$, 
\begin{equation}
\Phi_\e^*(t,r)<0.
\end{equation}
As $\Phi_\e^*(0,r^*)=\e\frac{\partial u_\e^*}{\partial r}(T,r^*)<0$ it follows that 
\begin{equation}\forall t\in [0;T]\,, \Phi_\e^*(t,r^*)<0.\end{equation} Since $\Phi_\e$ is continuous in time, we can define 
\begin{equation}\alpha(y_0;\e,T):=\inf_{t\in [0;T], r\in (y_0;R)}\left(-\Phi_\e^*(t,r)\right)>0\end{equation} and the conclusion follows.
\end{proof}

Using this Claim we can come back to the sequence $\{f_k\}_{k\in \N}\in(\mathcal M_T(\O)\backslash\{f^*\})^\N$ satisfying \eqref{Eq:Contrad}.  Since $f^*$ is the unique maximiser of $\mathcal J_T^\e$, we must have 
\begin{equation}\label{Eq:Raise}
\int_0^T\left(-\frac{\partial p_\e^*}{\partial r}(t,r^*)\right)\Vert f_k-f^*\Vert_{L^1(\O)}^2\underset{k\to \infty}\rightarrow 0.
\end{equation}

If this were not the case, it would follow that the sequence $\{f_k\}_{k\in \N}$ converges weakly in $\mathcal M_T(\O)$ to some $f_\infty \neq f^*$. As a consequence, the sequence $\{u_{f_k}\}_{k\in \N}$ would converge in $\mathscr C^0(\OT)$ (using the uniform H\"{o}lder bounds from Proposition \ref{Pr:Regularity}) to $u_{f_\infty}$, and so 
\begin{equation}\mathcal J_T^\e(f_k)\underset{k\to \infty}\rightarrow \mathcal J_T^\e(f_\infty)>\mathcal J_T^\e(f^*).\end{equation} This would yield\begin{equation}\lim_{k\to \infty}\frac{\mathcal J_T^\e(f^*)-\mathcal J_T^\e(f_k)}{\int_0^T\left(-\frac{\partial p_\e^*}{\partial r}(t,r^*)\right)\Vert f_k(t,\cdot)-f^*\Vert_{L^1(\O)}^2}=\frac{\mathcal J_T^\e(f^*)-\mathcal J_T^\e(f_\infty)}{\int_0^T\left(-\frac{\partial p_\e^*}{\partial r}(t,r^*)\right)\Vert f_\infty(t,\cdot)-f^*\Vert_{L^1(\O)}^2}>0,\end{equation} a contradiction.

Hence we work under the assumption that \eqref{Eq:Raise} holds. From Claim \ref{Cl:NDA} this implies 
\begin{equation}\int_0^T\Vert f_k(t,\cdot)-f^*\Vert_{L^1(\O)}^2\underset{k\to \infty}\rightarrow 0.\end{equation} Hence, by Jensen's inequality, 
\begin{equation}\Vert f_k-f^*\Vert_{L^1(\OT)}=\int_0^T\Vert f_k(t,\cdot)-f^*\Vert_{L^1(\O)}dt\underset{k\to \infty}\rightarrow 0.\end{equation}
As a consequence of standard parabolic estimates (Proposition \ref{Pr:Regularity}) we have, for any $\alpha\in (0;1)$, 
\begin{equation}u_{f_k}\underset{k\to \infty}\rightarrow u_{f^*}=u^*\text{ in }\mathscr C^{0,\alpha}(\O)\end{equation} Defining, for any $k\in \N$, $p_\e^{f_k}$ as the solution of 
\begin{equation}\begin{cases}
\frac{\partial p_\e^{f_k}}{\partial t}+\Delta p_\e^{f_k}=-u_{f_k}\text{ in }\OT\,, 
\\ p_\e^{f_k}(T,\cdot)=\e u_{f_k}\text{ in }\O\,, 
\\ p_\e^{f_k}(t,\cdot)=0\text{ in }(0;T)\times \partial \O\,, \end{cases}\end{equation} This in in turn implies, by Schauder's estimates \cite[Theorem 48.2]{Souplet}

\begin{equation}p_\e^{f_k}\underset{k\to \infty}\rightarrow p_\e^*\text{ in }\mathscr C^{1,\alpha}(\OT).
\end{equation}
Hence, for any $y_0\in (0;r^*)$,  there exists $k(y_0)>0$ such that for any $k\geq k(y_0)$, by Claim \ref{Cl:NDA}, there holds, 
\begin{equation}
\forall (t,r) \in (0;T)\times (y_0;R)\,, \left(-\frac{\partial p_\e^{f_k}}{\partial r}(t,r)\right)\geq-\frac12 \frac{\partial p_\e^{*}}{\partial r}(t,r)>0,
\end{equation}
and, for any $k>0$ large enough, $\B(0;r^*)$ is a uniquely defined level set of $p_\e^{f_k}$: there exists $c_k$ such that 
\begin{equation}\B(0;r^*)=\{p_\e^{f_k}>c_k\}.\end{equation}  As a consequence, choosing $y_0$ small enough, we can ensure that all the assumptions of Proposition \ref{Pr:BathtubTD} are satisfied.

Finally, let us note that, by the same argument, these property also hold for any $p_\e^{f^*+\tau(f_k-f^*)}$ for any $\tau \in (0;1)$ and any $k$ large enough. In all the reasoning above, it suffices to add $\tau$ as another parameter in the family.

This allows us to apply Proposition \ref{Pr:BathtubTD}: there exists a constant $\underline \omega>0$ such that
\begin{multline}\label{Almost}
\forall f \in \mathcal M(\O)\,, \forall k\text{ large enough, }\forall t \in (0;T)\,, \forall \tau \in (0;1)\,,\\  \int_\O p_\e^{f^*+\tau(f_k-f^*)}(t,\cdot)(f^*-f_k(t,\cdot))\geq \underline\omega\left(-\frac{\partial p_\e^*}{\partial r}(t,r^*)\right)\Vert f_k(t,\cdot)-f^*\Vert_{L^1(\O)}^2.\end{multline}

Let us now apply, for any $k$ large enough, the mean value theorem to the map 
\begin{equation}T_k=[0;1]\ni \tau\mapsto \mathcal J_T^\e(f^*+\tau(f_k-f^*)).
\end{equation}
There exists $\overline \tau \in (0;1)$ such that
\begin{equation}
\mathcal J_T^\e(f_k-f^*)=\iint_\OT p_\e^{f^*+\overline\tau(f_k-f^*)} \left(f_k-f^*\right).
\end{equation}
Using \eqref{Almost} we get
\begin{equation}
\mathcal J_T^\e(f_k-f^*)=\iint_\OT p_\e^{f^*+\overline \tau(f_k-f^*)} \left(f_k-f^*\right)\geq\underline \omega\int_0^T\left(-\frac{\partial p_\e^*}{\partial r}(t,r^*)\right)\Vert f_k(t,\cdot)-f^*\Vert_{L^1(\O)}^2.
\end{equation}
This is a contradiction, and the Theorem follows.

\end{proof}

\section{Proof of Theorem \ref{Th:Ti}: quantitative inequalities via shape derivatives and bathtub principle}\label{Se:Ti}
\subsection{Presentation and plan of the proof}
The proof relies on the use of shape derivatives and on the study of an auxiliary problem. 
The structure of the proof is inspired by a previous work of the author \cite{MRB2020} and we will refer to this paper when needed. The main point is here to show an example of how shape derivatives may be used for parabolic problems.

Let us define, for any $\delta>0$, the class 
\begin{equation}\tag{$\overline{\bold{Adm}}(\delta)$}\label{Eq:AdmDelta}\overline{\mathcal M}(\O,\delta):=\left\{ f\in \overline{\mathcal M}(\O)\,, \Vert f-f^*\Vert_{L^1(\O)}=\delta\right\}.\end{equation} We first consider the auxiliary variational problem
\begin{equation}\tag{$\bold{P}_\delta$}\inf_{f\in \overline{\mathcal M}(\O,\delta)} \mathcal J_T(f)\end{equation}
and prove that it admits a solution $f_\delta$ (Lemma \ref{Le:ExistenceDelta} below). Once this is done, we prove (Lemma \ref{Le:Intermediaire} below) that Theorem \ref{Th:Ti} is equivalent to proving that
\begin{equation}
\underset{\delta \to 0}{\lim \inf}\left(\frac{\mathcal J_T(f^*)-\mathcal J_T(f_\delta)}{\delta^2}\right)>0.
\end{equation}

\begin{remark}At this stage, one may argue to explicitly characterize $f_\delta$ as a radially symmetric solution, and thus bypass the part about shape derivatives. However, as our goal is also to provide a full analysis of shape hessians for time-dependent problems, and to present, in the Conclusion, possible generalisations to other settings where the explicit characterisation of optimisers of such a penalised problem are no longer available, we choose to not take advantage of that fact here. 
\end{remark}

We then recall that $f^*=\mathds 1_{\B^*}$. We consider, for smooth enough vector fields $\Phi$, the deformed set $\B^*_\Phi:=(Id+\Phi)(\B^*)$ and, with a slight abuse of notation, we write 
$$\mathcal J_T(\B^*_\Phi):=\mathcal J_T(\mathds 1_{\B^*_\Phi}).$$ We will prove (Proposition \ref{Pr:NormalDeformation}) that whenever $\Phi$ is "small" enough (in a sense made precise in the section devoted to shape derivatives) there holds
\begin{equation}\mathcal J_T(\B^*)-\mathcal J_T(\B^*_\Phi)\geq C\operatorname{Vol}(\B^*_\Phi\Delta \B^*)^2\end{equation}
for some constant $C>0$.

We also prove a quantitative bathtub principle (Proposition \ref{Pr:Bathtub}), and finally conclude as in \cite{MRB2020} by comparing any competitor with one of the level sets of the switch function, and then this level set with the set $\B^*$. The key to conclude here is the convexity of the cost functional $\mathcal J_T$.

To proceed, we need some basic informations about the optimality conditions for Problem \eqref{Eq:PvTi}.

\paragraph{Optimality conditions for \eqref{Eq:PvTi}}
We recall, from Proposition \ref{Pr:Adjoint} that for any admissible perturbation $h\in L^\infty(\O)$ (that is, such that, for any $\e>0$ small enough, $f^*+\e h\in \overline{\mathcal M}(\O)$) the G\^ateaux-derivative of $u_f$ in the direction $h$, thereafter noted $\dot u$ solves 
\begin{equation}\label{Eq:Udot}
\begin{cases}
\frac{\partial \dot u}{\partial t}-\Delta \dot u=h\text{ in }\OT\,, 
\\ \dot u=0\text{ on }(\O;T)\times \partial \O\,, 
\\ \dot u(0,\cdot)\equiv 0
\end{cases}
\end{equation}
and that, introducing the solution $p_f$ of\begin{equation}\label{Eq:Adjoint}\begin{cases}
\frac{\partial p_f}{\partial t}+\Delta p_f=-u_f\text{ in } \OT\,, 
\\ p_f=0\text{ on }(0;T)\times \partial \O\,, 
\\ p_f(T,\cdot)\equiv 0.\end{cases}
\end{equation}
we get the following expression for the G\^ateaux-derivative of $\mathcal J_T$:

\begin{equation}\label{Eq:Jdot2}
\dot{\mathcal J_T}(h)=\iint_\OT hp_f=\int_\O h(x)\left(\int_0^Tp_f(t,x)dt\right)dx.
\end{equation}
 Let us define 
\begin{equation}\Psi(x):=\int_0^T p_f(t,x)dt.\end{equation} Hence it follows that  \begin{equation}\dot{\mathcal J_T}(h)=\int_\O h\Psi.\end{equation} 

\subsection{Reduction to an auxiliary problem}
We now justify the study of the auxiliary problem
\begin{equation}\label{Eq:PvDelta}\tag{$\bold{P}_\delta$}\inf_{f\in \overline{\mathcal M}(\O,\delta)} \mathcal J_T(f)\end{equation} where $\overline{\mathcal M}(\O,\delta)$ was defined in \eqref{Eq:AdmDelta}.

\begin{lemma}\label{Le:ExistenceDelta}
For any $\delta>0$, the variational problem \eqref{Eq:PvDelta} has a solution $f_\delta$.
\end{lemma}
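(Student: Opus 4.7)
The plan is to apply the direct method of the calculus of variations, the only subtle point being the non-trivial equality constraint $\|f-f^*\|_{L^1(\O)}=\delta$. First I would take a minimising sequence $(f_n)_{n\in\N}\subset \overline{\mathcal M}(\O,\delta)$ so that $\mathcal J_T(f_n)\to \inf_{\overline{\mathcal M}(\O,\delta)}\mathcal J_T$. Since $0\leq f_n\leq 1$, the Banach--Alaoglu theorem provides (up to a subsequence) some $f_\infty\in L^\infty(\O)$ with $f_n\rightharpoonup^* f_\infty$ weakly-$*$ in $L^\infty(\O)$. The pointwise bounds $0\leq f_\infty\leq 1$ and the volume constraint $\int_\O f_\infty=V_0$ survive in the limit (the first because the set $\{0\leq g\leq 1\}$ is weakly-$*$ closed, the second by testing against $\mathds 1_\O$), so $f_\infty\in \overline{\mathcal M}(\O)$.

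The step I expect to be the main obstacle — since the $L^1$ norm is not weak-$*$ continuous in general — is the preservation of $\|f_n-f^*\|_{L^1(\O)}=\delta$. The trick is that, since $f^*=\mathds 1_{\B^*}$ and $f_n$ takes values in $[0,1]$, this nonlinear constraint actually linearises: exploiting the sign of $f_n-f^*$ on $\B^*$ and on $\O\setminus\B^*$, and using $\int_\O f_n=V_0$,
\begin{equation}
\|f_n-f^*\|_{L^1(\O)}=\int_{\B^*}(1-f_n)+\int_{\O\setminus\B^*}f_n=2\Bigl(V_0-\int_{\B^*}f_n\Bigr),
\end{equation}
so the constraint is equivalent to the \emph{linear} condition $\int_{\B^*}f_n=V_0-\delta/2$. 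This passes to the weak-$*$ limit by testing against $\mathds 1_{\B^*}\in L^1(\O)$, giving $f_\infty\in \overline{\mathcal M}(\O,\delta)$.

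Finally, I would verify the continuity of $\mathcal J_T$ along the extracted subsequence. By the uniform Hölder estimates of Proposition \ref{Pr:Regularity} (applied to the time-independent controls $f_n$, or via the $\mathcal M_T$ embedding), the family $(u_{f_n})$ is precompact in $\mathscr C^0(\overline{\OT})$. Any cluster point is a weak solution of \eqref{Eq:Main} with source $f_\infty$ (the $L^\infty$ weak-$*$ convergence of $f_n$ implies $L^2$-weak convergence, which suffices to pass to the limit in the weak formulation), and by uniqueness this cluster point is $u_{f_\infty}$. Hence $u_{f_n}\to u_{f_\infty}$ uniformly, and $\mathcal J_T(f_n)\to \mathcal J_T(f_\infty)$ by dominated convergence. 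Thus $f_\delta:=f_\infty$ realises the infimum, proving the lemma.
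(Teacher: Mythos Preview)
Your proof is correct and follows essentially the same approach as the paper: both apply the direct method, linearise the $L^1$ constraint by exploiting that $f^*=\mathds 1_{\B^*}$ forces $f-f^*\leq 0$ on $\B^*$ and $f-f^*\geq 0$ on $(\B^*)^c$ (so that $\|f-f^*\|_{L^1}=\delta$ becomes the weak-$*$ stable linear condition $\int_{\B^*}f=V_0-\delta/2$), and then invoke the weak-$*$ continuity of $\mathcal J_T$. Your write-up is in fact slightly more explicit about the linearisation and about the passage to the limit in $\mathcal J_T$ via the H\"older bounds of Proposition~\ref{Pr:Regularity}.
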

This Lemma is an adaptation of \cite[Proposition 22]{MRB2020}; for the sake of readability, its proof is only given in Appendix \ref{Ap:Technical}.
Throughout the rest of the proof of Theorem \ref{Th:Ti} we adopt the following notation:
\begin{equation}\text{ For any $\delta>0$, $f_\delta$ is a solution of \eqref{Eq:PvDelta}.}\end{equation}
We now explain why we will focus on the study of $f_\delta$ as a competitor; it is the subject of the following Lemma:

\begin{lemma}\label{Le:Intermediaire} 
Theorem \ref{Th:Ti} is equivalent to proving that
\begin{equation}\label{Eq:Le}\underset{\delta \to0}{\lim\inf} \frac{\mathcal J_T(f^*)-\mathcal J_T(f_\delta)}{\delta^2}\geq C_0>0\end{equation} for some positive constant $C_0$.
\end{lemma}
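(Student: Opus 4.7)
The equivalence stated in the lemma splits into an easy implication and a slightly less trivial converse.

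The easy direction is that Theorem \ref{Th:Ti} implies \eqref{Eq:Le}: the plan is simply to substitute $f=f_\delta$ into the quantitative inequality, which, together with $\|f_\delta-f^*\|_{L^1(\O)}=\delta$, yields $\mathcal J_T(f^*)-\mathcal J_T(f_\delta)\geq C(T)\delta^2$, so that the $\liminf$ is bounded below by $C(T)>0$.

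For the converse, assume \eqref{Eq:Le} with constant $C_0>0$; the strategy is a compactness dichotomy splitting $\overline{\mathcal M}(\O)\setminus\{f^*\}$ into a near regime $\{f\colon\|f-f^*\|_{L^1(\O)}\leq\delta_0\}$ and a far regime $\{f\colon\|f-f^*\|_{L^1(\O)}>\delta_0\}$, for $\delta_0>0$ chosen small enough that $\mathcal J_T(f^*)-\mathcal J_T(f_\delta)\geq(C_0/2)\delta^2$ for every $\delta\in(0,\delta_0]$. In the near regime, for any competitor $f$ with $\delta:=\|f-f^*\|_{L^1(\O)}\leq\delta_0$, the extremal character of $f_\delta$ on the slice $\overline{\mathcal M}(\O,\delta)$ would give
\begin{equation*}
\mathcal J_T(f^*)-\mathcal J_T(f)\geq \mathcal J_T(f^*)-\mathcal J_T(f_\delta)\geq(C_0/2)\delta^2=(C_0/2)\|f-f^*\|_{L^1(\O)}^2.
\end{equation*}

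For the far regime, I would rely on the uniqueness of $f^*$ from Theorem \ref{Th:Uniqueness} together with compactness: the set $\{f\in\overline{\mathcal M}(\O)\colon\|f-f^*\|_{L^1(\O)}\geq\delta_0\}$ is weakly-$*$ compact in $L^\infty(\O)$, and $\mathcal J_T$ is continuous for this topology (thanks to the H\"{o}lder estimate in Proposition \ref{Pr:Regularity} combined with Rellich compactness), so $\eta:=\inf\{\mathcal J_T(f^*)-\mathcal J_T(g)\colon g\in\overline{\mathcal M}(\O),\,\|g-f^*\|_{L^1(\O)}\geq\delta_0\}$ is attained and, by uniqueness, strictly positive. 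Using the trivial diameter bound $\|f-f^*\|_{L^1(\O)}\leq 2V_0$ on $\overline{\mathcal M}(\O)$, this gives $\mathcal J_T(f^*)-\mathcal J_T(f)\geq\eta\geq \eta(2V_0)^{-2}\|f-f^*\|_{L^1(\O)}^2$ on the far regime. Taking $C(T):=\min\{C_0/2,\,\eta(2V_0)^{-2}\}$ and combining the two regimes then yields Theorem \ref{Th:Ti}. The real obstacle is not the equivalence itself, which reduces to this standard compactness-dichotomy argument, but the production of the bound \eqref{Eq:Le} on $f_\delta$, which is precisely what will occupy the shape-derivative and quantitative bathtub analyses announced in the rest of Section \ref{Se:Ti}.
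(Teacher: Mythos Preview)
Your argument is correct and matches the paper's own proof in substance. The paper packages the converse as a minimizing-sequence argument for the ratio $\mathcal G_T(f):=(\mathcal J_T(f^*)-\mathcal J_T(f))/\Vert f-f^*\Vert_{L^1}^2$ and then dichotomises on whether a weak-$*$ closure point of the sequence equals $f^*$ or not, rather than fixing a threshold $\delta_0$ in advance; but this is only a cosmetic repackaging of your near/far split, and the three ingredients (uniqueness from Theorem~\ref{Th:Uniqueness}, weak-$*$ continuity of $\mathcal J_T$, and the extremality of $f_\delta$ on each slice) are used identically.

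One point you assert without justification deserves a remark: the weak-$*$ closedness of the far set $\{f\in\overline{\mathcal M}(\O):\Vert f-f^*\Vert_{L^1}\geq\delta_0\}$. The $L^1$ distance to a generic element is \emph{not} weak-$*$ continuous, so this is not automatic. Here it works precisely because $f^*=\mathds 1_{\B^*}$ is an extreme point: for any $f\in\overline{\mathcal M}(\O)$ one has $f\leq f^*$ on $\B^*$ and $f\geq f^*$ on $(\B^*)^c$, so $\Vert f-f^*\Vert_{L^1}=2\bigl(V_0-\int_{\B^*}f\bigr)$ is an \emph{affine} (hence weak-$*$ continuous) functional of $f$. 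This is exactly the content of Claim~\ref{Cl:MdeltaCompact} in the paper, and you should cite it.
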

The proof of this result is an adaptation of \cite[Lemma 23]{MRB2020} and mostly relies on the uniqueness of maximisers. We postpone the proof to Appendix \ref{Ap:Technical}.
The rest of the proof of Theorem \ref{Th:Ti} is going to be devoted to the proof of Estimate \eqref{Eq:Le}, see Proposition \ref{Pr:Le}  below. To prove it, we need a local inequality for deformations of the optimal set $\B^*$ and a quantitative bathtub principle which will be used in combination with the convexity of the functional.

\subsection{Quantitative inequalities for deformations of $\B^*$: using shape derivatives} 
Let us consider a $\mathscr C^1$ set $E$ of volume $V_0$ such that $E\cap \partial \O=\emptyset$ and a smooth, compactly supported in $\O$, vector field $\Phi$. We define
\begin{equation}E_\Phi:=(Id+\Phi)(E).\end{equation} We recall that we see $\mathcal J_T$ as a shape functional by defining, with a slight abuse of notations, 
\begin{equation}\mathcal J_T(E):=\mathcal J_T(\mathds 1_E).\end{equation}Our goal is the following proposition:

\begin{proposition}\label{Pr:NormalDeformation}
There exist a constant $C>0$, a parameter $\eta>0$ and $p\in (1;+\infty)$ such that, for any compactly supported vector field $\Phi$ satisfying $\Vert \Phi\Vert_{W^{2,p}}$ there holds
\begin{equation}
\mathcal J_T(\B^*)-\mathcal J_T(\B^*_\Phi)\geq C\operatorname{Vol} \left(\B^*_\Phi\Delta \B^*\right)^2.
\end{equation}
\end{proposition}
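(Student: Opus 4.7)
The plan is to carry out a second-order shape calculus at $\B^*$, prove coercivity of the resulting shape Hessian with respect to the $L^2$ norm of the normal displacement on $\partial \B^*$, and then pass from this norm to $\operatorname{Vol}(\B^*_\Phi\Delta\B^*)$ via Cauchy--Schwarz.

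First I would reduce $\Phi$ to a normal graph. Choose $p>n$ so that $W^{2,p}\hookrightarrow \mathscr C^{1,\beta}$ and pick $\eta$ small enough that for $\|\Phi\|_{W^{2,p}}\leq\eta$ the boundary $\partial \B^*_\Phi$ is a $\mathscr C^{1,\beta}$-graph $r=r^*+h(\theta)$ over $\partial \B^*$, with $\|h\|_{\mathscr C^{1,\beta}(\partial \B^*)}\lesssim \|\Phi\|_{W^{2,p}}$. Classical shape calculus yields
\begin{equation*}
\operatorname{Vol}(\B^*_\Phi\Delta\B^*)=\int_{\partial\B^*}|h|\,d\sigma+O(\|h\|_{L^2}^2),\qquad \operatorname{Vol}(\B^*_\Phi)-V_0=\int_{\partial\B^*}h\,d\sigma+O(\|h\|_{L^2}^2).
\end{equation*}
Since $\operatorname{Vol}(\Delta)^2\leq |\partial\B^*|\,\|h\|_{L^2(\partial\B^*)}^2$, it suffices to prove the stronger bound $\mathcal J_T(\B^*)-\mathcal J_T(\B^*_\Phi)\gtrsim \|h\|_{L^2(\partial\B^*)}^2$.

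Next I would compute the one-parameter family $j(t):=\mathcal J_T\bigl((Id+t\Phi)(\B^*)\bigr)$. The Hadamard structure theorem, combined with \eqref{Eq:Jdot2}, gives $j'(0)=\int_{\partial\B^*}\Psi^*(\Phi\cdot\nu)\,d\sigma$, where $\Psi^*=\int_0^T p^*\,dt$ is the time-averaged adjoint at $f^*$. Since $u^*$ is radial (Proposition \ref{Pr:Radial}), so is $\Psi^*$, hence $\Psi^*\equiv c^*$ on $\partial\B^*$; thus $j'(0)=c^*\int_{\partial\B^*}h\,d\sigma$ is absorbed by the Lagrange multiplier of the volume constraint. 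A standard material-derivative computation then gives
\begin{equation*}
j''(0)=\int_{\partial\B^*}\frac{\partial\Psi^*}{\partial\nu}\,h^2\,d\sigma+\int_{\partial\B^*}\Psi'[h]\,h\,d\sigma+c^*\cdot(\text{curvature correction vanishing when }\int h=0),
\end{equation*}
where $\Psi'[h]$ denotes the shape derivative of $\Psi$ in the direction $h\nu$, obtained by shape-differentiating \eqref{Eq:Main}--\eqref{Eq:Adjoint} and integrating in time.

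The core step is to prove the coercivity $-j''(0)\gtrsim \|h\|_{L^2(\partial\B^*)}^2$ on the subspace $\int h=0$. By the bathtub optimality of $f^*$ against $\Psi^*$, one has $\B^*=\{\Psi^*>c^*\}$; a Hopf-lemma argument on the backward parabolic problem satisfied by $p^*$ (in the spirit of the proof of Claim \ref{Cl:NDA}) then yields $\partial_\nu \Psi^*|_{\partial\B^*}=-\alpha_0<0$, producing the dominant contribution $-\alpha_0\|h\|_{L^2(\partial\B^*)}^2$. For the non-local term $\int\Psi'[h]\,h\,d\sigma$ I would exploit the full rotational invariance of $\B^*$: the operator $h\mapsto \Psi'[h]\big|_{\partial\B^*}$ commutes with rotations and is therefore diagonal in the spherical-harmonic basis, while parabolic smoothing of the PDEs defining $\Psi'[h]$ makes it compact on $L^2(\partial\B^*)$ with mode-by-mode estimates; on the mean-zero subspace this term is then strictly dominated by the Hopf contribution.

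Finally, a second-order Taylor expansion with integral remainder yields $\mathcal J_T(\B^*_\Phi)-\mathcal J_T(\B^*)=\tfrac12 j''(0)+R(\Phi)$ with $R(\Phi)=o(\|h\|_{L^2}^2)$, the remainder being controlled by the continuity of the shape Hessian under small $W^{2,p}$ perturbations, itself a consequence of the uniform parabolic regularity of Proposition \ref{Pr:Regularity} and Schauder estimates for the adjoint. For $\eta$ small enough we conclude $\mathcal J_T(\B^*)-\mathcal J_T(\B^*_\Phi)\geq \tfrac{\alpha_0}4\|h\|_{L^2(\partial\B^*)}^2\geq C\operatorname{Vol}(\Delta)^2$. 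The main obstacle is the full analysis of $j''(0)$, and especially the quantitative control of the non-local piece $\int\Psi'[h]\,h$ by parabolic smoothing; this is presumably where the dimensional restriction $n=2$ enters, since the spherical-harmonic computation on $\partial\B^*$ is most transparent there.
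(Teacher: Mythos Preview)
Your overall architecture matches the paper's: reduce to normal displacements, show $\B^*$ is critical, compute the second-order shape derivative of the Lagrangian, prove $L^2(\partial\B^*)$-coercivity, control the remainder by continuity of the shape Hessian under small $W^{2,p}$ perturbations, and finish with Cauchy--Schwarz. The paper also adds a volume penalty $C(\operatorname{Vol}(\B^*_{t\Phi})-V_0)^2$ to the Lagrangian rather than restricting to $\int h=0$, but this is cosmetic.

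The genuine gap is in your coercivity step. You split $j''(0)$ into the local Hopf piece $\int_{\partial\B^*}\partial_\nu\Psi^*\,h^2=-\alpha_0\Vert h\Vert_{L^2}^2$ and the non-local piece $\int_{\partial\B^*}\Psi'[h]\,h$, and you claim the latter is ``strictly dominated by the Hopf contribution'' because the operator $h\mapsto\Psi'[h]\big|_{\partial\B^*}$ is compact by parabolic smoothing. Compactness only tells you the eigenvalues tend to $0$ for high modes; it says nothing about the first few modes. In the paper's Fourier diagonalisation (in dimension $2$, with $h=\sum_{k\geq1}\alpha_k\cos(k\theta)+\beta_k\sin(k\theta)$), the non-local eigenvalue on mode $k$ is $\int_0^T z_k(t,r^*)\,dt$, and the paper shows $z_1\geq0$: the non-local term has the \emph{wrong sign} and actively fights against coercivity. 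So for $k=1$ (and a priori for all low $k$) you genuinely have to prove the strict inequality $\int_0^T z_k(t,r^*)\,dt<\alpha_0$, and no soft compactness argument yields this.

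The paper closes this gap by two parabolic comparison arguments. First, it shows the sequence of mode-eigenvalues $\omega_k=\int_0^T z_k(t,r^*)\,dt+\int_0^T\partial_r\overline p(t,r^*)\,dt$ is non-increasing in $k$, via maximum-principle comparisons $y_k\leq y_1$ and then $z_k\leq z_1$ for the radial solutions of the linearised forward and adjoint problems. Second, and this is the heart of the matter, it proves $\omega_1<0$ not by estimating $z_1$ and $\partial_r\overline p$ separately but by setting $\overline\Phi:=\partial_r\overline p+z_1$ and checking that $\overline\Phi$ is a subsolution of a backward parabolic operator with the right sign, using crucially that $\partial_r u^*<0$ (Proposition~\ref{Pr:Radial}). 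This is exactly the ``quantitative control of the non-local piece'' you flag as the main obstacle; your proposed mechanism (parabolic smoothing) is not the one that works, and you should replace it by these comparison arguments.
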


The proof of this Proposition follows the synthetic presentation of quantitative inequalities for deformations of optimal sets presented in \cite{DambrineLamboley}; their proof holds for shape optimisation of the domain $\O$, and we have presented in \cite{MRB2020} how to adapt their method to the optimisation of a subdomain $E\subset \O$. Let us present the main steps of the proof of Proposition \ref{Pr:NormalDeformation}:
\begin{enumerate}
\item The first one is to prove that $\B^*$ is a critical shape in the following sense: computing, for any compactly supported vector field $\Phi\in W^{2,p}$ the first order shape derivative $\mathcal J_T'(\B^*)[\Phi]$ we need to prove that, if $\Phi$ additionally satisfies the linearised constraint 
\begin{equation}\label{Eq:Linearvol}\int_{\partial \B^*} \left(\Phi\cdot \nu \right)=0\end{equation} then there holds 
\begin{equation}\mathcal J_T'(E^*)[\Phi]=0.\end{equation}
This allows to consider, for the computation and analysis of second-order shape derivatives, vector fields $\Phi$ that are normal to $\partial \B^*$, and also to define a Lagrangian associated with a Lagrange multiplier
\begin{equation}
\mathcal L_T(E):=\mathcal J_T(E)+\mu\operatorname{Vol}(E),\end{equation} which satisfies, for any compactly supported vector field $\Phi\in W^{2,p}$ not necessarily satisfying \eqref{Eq:Linearvol}
\begin{equation} \mathcal L_T'(\B^*)[\Phi]=0.\end{equation}
\item As a second step, we compute the second order shape derivative of the Lagrangian $\mathcal L_T''(\B^*)[\Phi,\Phi]$ and prove an $L^2$ coercivity estimate, i.e that there exists a constant $c_0>0$ such that
\begin{equation}\forall \Phi \in W^{2,p}(\O;\R^2)\,, \int_{\partial \B^*}\phi\cdot \nu=0\Rightarrow \mathcal L_T''(\B^*)[\Phi,\Phi]\leq- c_0 \Vert \Phi \cdot \nu \Vert_{L^2(\partial \B^*)}^2.\end{equation}
This is done using a comparison principle previously used for elliptic equations \cite{MazariNadinPrivat,MazariQuantitative}, and our contribution here is to show how it extends to the case of parabolic equations.
\item We then define for a compactly supported vector field $\Phi \in W^{2,p}$ the map \begin{equation}j_\Phi:[0;1]\ni t\mapsto \mathcal L_T(\B^*_{t\Phi})+C(\operatorname{Vol}(\B_{t\Phi}^*)-V_0)^2\end{equation} for some $C$ large enough such that 
\begin{equation}j_\Phi''(0)\leq -\tilde c_0\Vert \Phi \cdot \nu \Vert_{L^2(\partial \B^*)}^2\end{equation} and prove that there exists a modulus of continuity $\eta$, that is, a continuous function $\eta:\R_+\to \R_+$ such that $\omega(0)=0$, such that 
\begin{equation}|j_\Phi''(t)-j_\Phi''(0)|\leq \eta\left(\Vert \Phi\Vert_{W^{2,p}}\right) \Vert \Phi \cdot \nu \Vert_{L^2(\partial \B^*)}^2,\end{equation} and conclude using the Taylor-Lagrange formula
\begin{equation}j_\Phi(1)-j_\Phi(0)=\int_0^1 j_\Phi''(s)ds\leq \left(-\tilde c_0+\omega\left(\Vert \Phi\Vert_{W^{2,p}}\right)\right)\Vert \Phi \cdot \nu \Vert_{L^2(\partial \B^*)}^2.\end{equation} 
\end{enumerate}
All these steps rely on fine properties of first and second order shape derivatives. We begin with the computations of the shape derivatives of the Lagrange multiplier associated with the volume constraint and of the diagonalisation of the associated shape hessian at $E^*$.

\subsubsection{Computation of first and second order shape derivatives, computation of the Lagrange multiplier and diagonalisation of the shape Hessian}
\paragraph{Computation and analysis of the first order shape derivative}
Let us define, for any  subdomain $E$ of $\O$ the function $u_E$ as the solution of \eqref{Eq:Main} associated with $f=\mathds 1_E.$ It should be noted that the shape differentiability of first and second order of the shape functional $\mathcal J_T:E\mapsto \mathcal J_T(E)$ follows from the same arguments as in \cite{Harbrecht}, and so does the computation of the first order shape derivative. The computations are a straightforward adaptation of \cite{Harbrecht} and we only give here a heuristic approach. Let us, then, consider a $\mathscr C^1$ shape, and a $W^{2,p}$ compactly supported vector field $E$. The shape derivative of $E\mapsto u_E$ in the direction $\Phi$ is denoted by $u'$ for the sake of notational simplicity. The differentiation of the main equation of \eqref{Eq:Main}  gives, in a weak form, that, for any test function $v$,
\begin{equation}-\iint_\OT \frac{\partial v}{\partial t}u'+\iint_\OT\langle \n u',\n v\rangle=\iint_{(0;T)\times\partial E}v\left(\Phi \cdot \nu\right).\end{equation} \begin{remark} Alternatively, at a formal level: the differentiation of the initial condition yields 
\begin{equation} u'(0,\cdot)\equiv 0.\end{equation} The differentiation of the main equation gives 
\begin{equation}\frac{\partial u'}{\partial t}-\Delta u'=0.\end{equation}
Finally, the structural condition given by the weak formulation of \eqref{Eq:Main} is that there is no jump of the normal derivative on $\partial \B^*$ or, mathematically, that
\begin{equation}\label{Eq:NiJump}\left.\left\llbracket \frac{\partial u_E}{\partial \nu}\right\rrbracket\right|_{\partial E}=0.\end{equation}
We refer to Subsection \ref{Su:Notation} for the definition of the jump. Differentiating \eqref{Eq:NiJump} yields 
\begin{equation}
 \left.\left\llbracket \frac{\partial u'}{\partial \nu}\right\rrbracket\right|_{\partial E}=-\left(\Phi\cdot \nu\right).
\end{equation}
\end{remark}
In conclusion, $u'$ satisfies 
\begin{equation}\label{Eq:Uprime}
\begin{cases}
\frac{\partial u'}{\partial t}-\Delta u'=0\text{ in }\OT\,, 
\\ u'(t,\cdot)=0\text{ on }(0;T)\times \partial \O\,, 
\\ \left.\left\llbracket \frac{\partial u'}{\partial \nu}\right\rrbracket\right|_{\partial E}=-\left(\Phi\cdot \nu\right),
\\ u'(0,\cdot)\equiv0.\end{cases}\end{equation}

Furthermore, if we consider the adjoint state $p_{\mathds 1_E}$, which we abbreviate as $p_E$ for notational simplicity, given by Equation \eqref{Eq:Adjoint} we obtain 
\begin{align*}
\mathcal J_T'(E)[\Phi]&=\iint_\OT u_Eu'
\\&=-\iint_\OT \left(\frac{\partial p_E}{\partial t}+\Delta p_E\right)u'
\\&=-\iint_{(0;T)\times \partial E}\left\llbracket\frac{\partial u'}{\partial\nu}\right\rrbracket p_E
\\&=\iint_{(0;T)\times \partial E} \left(\Phi\cdot \nu\right)p_E.
\end{align*}

Let us single out this last identity:
\begin{equation}\label{Eq:JPrime}
\mathcal J_T'(E)[\Phi]=\int_{\partial E}\left(\Phi\cdot \nu\right)\left(\int_{0}^T p_E\right).\end{equation}
This allows us to obtain the following result:
\begin{lemma}\label{Le:SCritical}
$\B^*$ is a critical shape in the following sense: for any compactly supported vector field $\Phi \in W^{2,p}(\O;\R^2)$
\begin{equation}
\int_{\partial \B^*}\left(\Phi\cdot\nu\right)=0\Rightarrow \mathcal J_T(\B^*)[\Phi]=0.\end{equation}\end{lemma}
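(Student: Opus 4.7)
The plan is to use directly the formula \eqref{Eq:JPrime} established just above, combined with the radial symmetry of the adjoint state $p^*:=p_{\mathds 1_{\B^*}}$.

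First, I would specialise \eqref{Eq:JPrime} to $E=\B^*$: for any compactly supported $\Phi \in W^{2,p}(\O;\R^2)$,
\begin{equation}
\mathcal J_T'(\B^*)[\Phi] = \int_{\partial \B^*}(\Phi\cdot\nu)\,\Psi^*(x)\,d\sigma(x),\qquad \Psi^*(x):=\int_0^T p^*(t,x)\,dt.
\end{equation}

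Second, I would argue that $\Psi^*$ is radially symmetric. Indeed, $u^*$ is radially symmetric by Proposition \ref{Pr:Radial}, so the backward parabolic equation \eqref{Eq:Adjoint} for $p^*$ has a radially symmetric source and radially symmetric terminal and boundary conditions; by uniqueness $p^*(t,\cdot)$ is radially symmetric for every $t\in(0;T)$, and therefore so is its time integral $\Psi^*$. Since $\partial \B^*$ is the sphere of radius $r^*$ centred at the origin, $\Psi^*$ is constant on $\partial \B^*$; call this constant $c^*$.

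Third, I would conclude: under the linearised volume constraint $\int_{\partial \B^*}(\Phi\cdot \nu)=0$,
\begin{equation}
\mathcal J_T'(\B^*)[\Phi] = c^*\int_{\partial \B^*}(\Phi\cdot \nu)\,d\sigma = 0,
\end{equation}
which is exactly the claim. There is no substantial obstacle here: the shape-differentiability is guaranteed by the arguments alluded to above (following \cite{Harbrecht}), and the only genuine ingredient beyond formula \eqref{Eq:JPrime} is the radial symmetry of $u^*$ (hence of $p^*$), which has already been established in Proposition \ref{Pr:Radial}.
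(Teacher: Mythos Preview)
Your proposal is correct and follows essentially the same approach as the paper: specialise \eqref{Eq:JPrime} to $E=\B^*$, use the radial symmetry of $u^*$ from Proposition~\ref{Pr:Radial} to deduce that $p^*$ and hence $\Psi^*$ are radially symmetric, so that $\Psi^*$ is a constant on $\partial\B^*$ which factors out of the boundary integral. If anything, your argument is slightly more explicit in justifying the radial symmetry of $p^*$ via uniqueness for the backward equation.
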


\begin{proof}[Proof of Lemma \ref{Le:SCritical}]
From Proposition \ref{Pr:Radial}, $u^*$ is a radially symmetric function. Hence, the associated adjoint state $p^*=p_{\mathbb B^*}$ is also radially symmetric, so that the map 
\begin{equation}\Psi:\B(0;R)\ni x\mapsto \int_{0}^T p^*(t,x)dt\end{equation} is radially symmetric. Letting $\overline \Psi_{\partial \B^*}:=\left.\Psi\right|_{\partial \B^*}$ we obtain 
\begin{equation}\mathcal J_T'(\B^*)[\Phi]=\overline \Psi_{\partial \B^*}\int_{\partial \B^*}\left( \Phi\cdot \nu\right)=0.\end{equation}
\end{proof}

It follows that the Lagrange multiplier associated with the volume constraint is $\mu=-\overline{\Psi}_{\partial \B^*}$ and we can hence define the Lagrangian
\begin{equation}
\mathcal L_{\B^*}(E):=\mathcal J_T(E)-\overline{\Psi}_{\partial \B^*}\operatorname{Vol}(E)\end{equation} and observe that, since $\operatorname{Vol}'(E)[\Phi]=\int_{\partial E} \left(\Phi\cdot \nu\right)$ we have, for any compactly supported vector field $\Phi\in W^{2,p}(\O;\R^2)$
\begin{equation}
\mathcal L_{\B^*}'(\B^*)[\Phi]=0.\end{equation}
As a consequence of \cite[Theorem~5.9.2 and the remark below]{HenrotPierre}, the second-order shape derivative in a direction $\Phi$ only depends on the normal trace of $\Phi$ and we hence work under the Assumption:
\begin{equation}\tag{$\bold{A}_\nu$}\label{As:Normal}\text{$\Phi$ is normal to $\partial \B^*$.}\end{equation}

\paragraph{Computation of the shape hessian and diagonalisation of the shape hessian at the ball}
We can now turn to the computation of the second order shape derivative. We once again choose a $\mathscr C^2$ shape $E$ and a compactly supported vector field $\Phi\in W^{2,p}(\O;\R^2)$. It is well-known \cite[Proposition 5.4.18]{HenrotPierre} that 
\begin{equation}\operatorname{Vol}''(E)[\Phi,\Phi]=\int_{\partial E} \mathscr H\left(\Phi\cdot \nu\right)^2,\end{equation} where $\mathscr H$ is the mean curvature of $\partial E$. Furthermore, differentiating \eqref{Eq:JPrime} and using once again \cite[Proposition 5.4.18]{HenrotPierre} we obtain
\begin{equation}\mathcal J_T''(E)[\Phi,\Phi]=\int_{\partial E}\left(\Phi\cdot \nu\right)\left(\int_{(0;T)} p'\right)+\int_{\partial E}\left(\Phi\cdot \nu\right)^2\left(\mathscr H \int_{0}^T p_E+\int_0^T\frac{\partial p_E}{\partial \nu}\right)\end{equation} where $p'$ satisfies 
\begin{equation}\label{Eq:AdjointPrime}\begin{cases}
\frac{\partial p'}{\partial t}+\Delta p'=-u'\text{ in }\OT,
\\ p'=0\text{ on }(0;T)\times \O\,, 
\\ p'(T,\cdot)\equiv 0\text{ in }\Omega.\end{cases}\end{equation}

In particular, the shape hessian of the Lagrangian at the ball is given by
\begin{align*}
\mathcal L_{\B^*}''(\B^*)[\Phi,\Phi]&=\int_{\partial \B^*}\left(\Phi\cdot \nu\right)\left(\int_{(0;T)} p'\right)+\int_{\partial \B^*}\left(\Phi\cdot \nu\right)^2\left(\mathscr H\left.\overline{\Psi}\right|_{\partial \B^*}+\int_0^T\frac{\partial p^*}{\partial \nu}\right)-\left.\overline{\Psi}\right|_{\partial \B^*}\int_{\partial \B^*}\mathscr H \left(\Phi\cdot \nu\right)^2
\end{align*}
so that simplifying the terms involving the mean curvature we are left with 
\begin{equation}\label{Eq:ShapeHessianBall}
\mathcal L_{\B^*}''(\B^*)[\Phi,\Phi]=\int_{\partial \B^*}\left(\Phi\cdot \nu\right)\left(\int_{(0;T)} p'\right)+\int_{\partial \B^*}\left(\Phi\cdot \nu\right)^2\int_0^T\frac{\partial p}{\partial \nu}.\end{equation}

Let us now diagonalise it. Since $\Phi$ is a vector field that is normal to $\mathbb S^*:=\partial \B^*$ from Assumption \eqref{As:Normal} it follows that we can decompose it, in angular coordinates, as 
\begin{equation}\label{Eq:Decomposition}
\Phi\cdot \nu=\sum_{k=1}^\infty \alpha_k \cos(k\cdot)+\beta_k\sin(k\cdot).\end{equation}
\begin{remark} The fact that the sum involving the cosines starts at $k=1$ is a consequence of the fact that to compute the optimality condition for second order shape derivative we need to work in the space satisfying the linearised constraint or, in this case, to assume that 
\begin{equation}\int_{\partial \B^*}\Phi\cdot \nu=0.\end{equation}\end{remark}

Let us first define $u_k'$ (resp. $v_k'$) as the solution of \eqref{Eq:Uprime} associated with $\Phi\cdot \nu=\cos(k\cdot)$ (resp. $\sin(k\cdot)$). It is straightforward to see that these two functions write 
\begin{equation}
u_k'(r,\theta)=y_k(r)\cos(k\theta)\,, v_k'(r,\theta)=y_k(r)\sin(k\theta)\end{equation} where $y_k$ solves, for any $k\in \N^*$, 

\begin{equation}\begin{cases}
\frac{\partial y_k}{\partial t}-\frac1r\frac{\partial }{\partial r}(r\frac{\partial y_k}{\partial r})=-\frac{k^2}{r^2}y_k\text{ in }(0;R)\,, 
\\ \left\llbracket y_k'\right\rrbracket(r^*)=-1\,, 
\\ y_k(R,\cdot)=0\,, 
\\ y_k'(0)=0.\end{cases}\end{equation}

Let us also introduce $g_k'$ (resp. $w_k'$) the solution of \eqref{Eq:AdjointPrime} associated with $\Phi\cdot \nu=\cos(k\cdot)$ (resp. $\sin(k\cdot)$). It is straightforward to see that these two functions write 
\begin{equation}
g_k'(r,\theta)=z_k(r)\cos(k\theta)\,, w_k'(r,\theta)=z_k(r)\sin(k\theta)\end{equation} where $z_k$ solves, for any $k\in \N^*$, 

\begin{equation}\begin{cases}
\frac{\partial z_k}{\partial t}+\frac1r\frac{\partial}{\partial r}(r\frac{\partial z_k}{\partial r})=\frac{k^2}{r^2}z_k-y_k\text{ in }(0;R)\,, 
\\ z_k(R,\cdot)=0\,, 
\\ z_k(T)=0.\end{cases}\end{equation}

Furthermore, since $p^*$ is a radially symmetric function let us introduce the function $\overline p$ such that 
\begin{equation}p^*(t,r,\theta)=\overline p(t,r).\end{equation}

This allows to recast the second order shape derivative \eqref{Eq:ShapeHessianBall} through the following Lemma:
\begin{lemma}\label{Le:Diagonalisation}
If $\Phi\cdot \nu$ is of the form \eqref{Eq:Decomposition} then there holds 
\begin{equation}
\mathcal L_{\B^*}''(\B^*)[\Phi,\Phi]=\frac{r^*}2\sum_{k=1}^\infty \omega_k \left\{ \alpha_k^2+\beta_k^2\right\} \end{equation} where for every $k\in \N^*$ we have defined 
\begin{equation}
\omega_k:=\int_0^T z_k(t,r^*)dt+\int_0^T \frac{\partial \overline p}{\partial r}(t,r^*)dt.
\end{equation}
\end{lemma}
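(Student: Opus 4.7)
\textbf{Proof plan for Lemma \ref{Le:Diagonalisation}.} The strategy is to exploit the linearity of the parabolic systems \eqref{Eq:Uprime} and \eqref{Eq:AdjointPrime} with respect to the jump datum $\Phi\cdot \nu$ and to reduce the computation to Fourier orthogonality on $\partial \B^* = r^*\mathbb S^1$, using that $p^*$ is radial by Proposition \ref{Pr:Radial}. Starting from \eqref{Eq:ShapeHessianBall}, I would treat the two terms separately and then combine.

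First I would justify the separation of variables defining $u_k',v_k',g_k',w_k'$. If $\Phi\cdot \nu = \cos(k\theta)$, then plugging the ansatz $u_k'(t,r,\theta) = y_k(t,r)\cos(k\theta)$ into \eqref{Eq:Uprime} and using $\Delta = \partial_r^2 + \frac{1}{r}\partial_r + \frac{1}{r^2}\partial_\theta^2$ together with the spectral identity $\partial_\theta^2 \cos(k\theta) = -k^2\cos(k\theta)$ yields exactly the stated 1D parabolic problem for $y_k$, while the jump condition $\llbracket \partial_\nu u_k' \rrbracket_{\partial \B^*} = -\cos(k\theta)$ separates into the jump condition $\llbracket y_k' \rrbracket(r^*) = -1$; boundary and initial conditions are automatically compatible. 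The same manipulation applied to \eqref{Eq:AdjointPrime} gives the system for $z_k$, with the extra source $-y_k$ inherited from the right-hand side $-u'$. The analogous argument with $\sin(k\theta)$ instead of $\cos(k\theta)$ produces the same radial profiles. By linearity of \eqref{Eq:Uprime} and \eqref{Eq:AdjointPrime}, the decomposition \eqref{Eq:Decomposition} of $\Phi\cdot \nu$ then yields
\begin{equation*}
u' = \sum_{k\geq 1}\bigl(\alpha_k u_k' + \beta_k v_k'\bigr), \qquad p' = \sum_{k\geq 1}\bigl(\alpha_k g_k' + \beta_k w_k'\bigr).
\end{equation*}

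Next I would substitute these expansions into \eqref{Eq:ShapeHessianBall}. For the first term, parametrising $\partial \B^*$ by the angular variable and using Fubini to exchange the time and angular integrations, the integrand becomes the product of two Fourier series evaluated at $r=r^*$, weighted by $\int_0^T z_k(t,r^*)\,dt$. The orthogonality relations for $\cos(k\theta)$ and $\sin(k\theta)$ on $[0,2\pi)$ annihilate every cross-term and leave only the diagonal contributions proportional to $(\alpha_k^2+\beta_k^2)\int_0^T z_k(t,r^*)\,dt$. For the second term, the radial symmetry of $p^*$ implies that the weight $\int_0^T \partial_\nu p^*\,dt$ reduces on $\partial \B^*$ to the $\theta$-independent constant $\int_0^T \partial_r \overline p(t,r^*)\,dt$, which factors out of the angular integral; Parseval's identity then gives $\int_0^{2\pi}(\Phi\cdot\nu)^2\,d\theta$ as a clean sum $\sum_k(\alpha_k^2+\beta_k^2)$ up to a normalisation constant. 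Adding the two contributions collects the coefficient $\omega_k = \int_0^T z_k(t,r^*)\,dt + \int_0^T \partial_r \overline p(t,r^*)\,dt$ in front of each $(\alpha_k^2 + \beta_k^2)$, producing the claimed diagonal form.

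The main potential obstacle is technical rather than conceptual: one must justify that the series for $u'$ and $p'$ converge in a strong enough sense to permit termwise integration in both $t$ and $\theta$, and that integration and summation may be exchanged in both terms of \eqref{Eq:ShapeHessianBall}. This is settled by the $W^{2,p}$ bound on $\Phi$, which controls the decay of $(\alpha_k,\beta_k)$, combined with uniform-in-$k$ energy estimates on $y_k,z_k$ obtained from the 1D parabolic equations (the $-k^2 y_k/r^2$ term only helps dissipation), which force absolute convergence of the relevant $L^2(\partial\B^*)$ series. Beyond this bookkeeping, the argument is purely orthogonality.
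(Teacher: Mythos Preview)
Your proposal is correct and follows essentially the same route as the paper: expand $\Phi\cdot\nu$ and $p'$ in angular Fourier modes, invoke the radiality of $p^*$ for the second term of \eqref{Eq:ShapeHessianBall}, and use the $L^2(0,2\pi)$ orthogonality of $\{\cos(k\theta),\sin(k\theta)\}$ to kill the cross terms. The paper's own proof is a terse three-line version of exactly this computation; your added paragraph on justifying the interchange of summation and integration is more careful than what the paper records, but not a different idea.
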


\begin{proof}[Proof of Lemma \ref{Le:Diagonalisation}]
We can write \eqref{Eq:ShapeHessianBall} as
\begin{align*}
\mathcal L_{\B^*}''(\B^*)[\Phi,\Phi]&=\int_0^{2\pi}\left(\Phi\cdot \nu\right)\left(\int_{(0;T)} p'\right)+\int_0^{2\pi}\left(\Phi\cdot \nu\right)^2\int_0^T\frac{\partial p^*}{\partial \nu}
\\&=\sum_{k,k'=1}^\infty\int_{0}^{2\pi}\int_0^T\left(\alpha_k\alpha_{k'}\cos(k\theta)\cos(k'\theta)+\beta_k\beta_{k'}\sin(k\theta)\sin(k'\theta)\right.
\\&+\left.+\alpha_k\beta_{k'} \cos(k\theta)\sin(k'\theta)\right)z_k(t,r^*)dtd\theta
\\&+\sum_{k=1}^\infty\int_0^{2\pi}\int_0^T \frac12\left(\alpha_k^2+\beta_k^2\right)\frac{\partial \overline p}{\partial r}(t,r^*)dt.
\end{align*}
All the crossed terms disappear for $k\neq k'$, and the conclusion follows by integrating in polar coordinates.
\end{proof}

We may now state the main result of this subsection:
\begin{proposition}\label{Pr:Coercivity}
There exists a constant $c_0>0$ such that for any $\Phi\in W^{2,p}$ satisfying \eqref{As:Normal} there holds
\begin{equation}\mathcal L_{\B^*}''(\B^*)[\Phi,\Phi]\leq -c_0\int_{\partial \B^*} \left(\Phi\cdot\nu\right)^2.
\end{equation}
\end{proposition}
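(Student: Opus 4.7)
The plan is to leverage the diagonalisation in Lemma \ref{Le:Diagonalisation} and reduce the coercivity estimate to a uniform negativity bound on the one-dimensional spectrum $\{\omega_k\}_{k\geq 1}$. Computing in polar coordinates and using the orthogonality of the trigonometric system on $[0;2\pi]$ gives
\begin{equation}
\int_{\partial \B^*}\left(\Phi\cdot \nu\right)^2=\pi r^*\sum_{k=1}^{\infty}\left(\alpha_k^2+\beta_k^2\right),
\end{equation}
so that the inequality to establish is equivalent to
\begin{equation}
\exists\, c_0>0,\quad \forall k\geq 1,\quad \omega_k\leq -c_0.
\end{equation}
I would then split
\begin{equation}
\omega_k=\underbrace{\int_0^T z_k(t,r^*)\,dt}_{=:A_k}\,+\,\underbrace{\int_0^T\frac{\partial\overline p}{\partial r}(t,r^*)\,dt}_{=:B}.
\end{equation}
The radial and monotone structure of $u^*$ from Proposition \ref{Pr:Radial} is inherited by $\overline p$ via the backward equation \eqref{Eq:Fendi}: $\overline p$ is radially symmetric, nonnegative and strictly decreasing in $r$ for $t<T$, and the same parabolic Hopf-type argument used in Proposition \ref{Pr:Radial} applied to $\partial_r\overline p$ yields $B<0$. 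The term $B$ plays the role of the "boundary part" and will be the engine of negativity.

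Once $B<0$ is secured, the heart of the matter is to control $A_k$. First I would treat the high-frequency regime. Testing the equation for $y_k$ against $y_k$ and integrating on $(0;T)\times(0;R)$ (with the weight $r$ in the radial volume element) gives the energy identity
\begin{equation}
\frac12\int_0^R ry_k^2(T,r)\,dr+\iint r\Bigl(\partial_r y_k\bigr)^2+k^2\iint \frac{y_k^2}{r}=-\int_0^T y_k(t,r^*)\,dt,
\end{equation}
where the right-hand side comes from the jump condition $\llbracket y_k'\rrbracket(r^*)=-1$. By Cauchy--Schwarz and the strong damping coming from the $k^2/r^2$ term, this forces $y_k\to 0$ (uniformly away from $\{r^*\}$ and in the natural $L^2$ norm) as $k\to\infty$, at a polynomial rate in $k$. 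Feeding this into the backward parabolic equation for $z_k$ and using standard Schauder/maximum-principle estimates yields $A_k\to 0$ as $k\to\infty$. Hence $\omega_k\to B<0$, which takes care of all $k\geq k_0$ for some $k_0$.

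For the remaining finite set of modes $k\in\{1,\dots,k_0-1\}$ I would argue by a comparison principle, in the spirit of \cite{MazariNadinPrivat,MazariQuantitative} but adapted to the parabolic setting. The trick is to multiply the equation on $z_k$ by $y_k$ and the equation on $y_k$ by $z_k$, subtract and integrate by parts both in time and in space, using $z_k(T,\cdot)=0$ and $y_k(0,\cdot)=0$ together with the jump relation to obtain the symmetric representation
\begin{equation}
A_k=\int_0^T z_k(t,r^*)\,dt=-\iint r\,u^*\,y_k^2 \,dr\,dt - k^2\iint \frac{y_k z_k}{r}\,dr\,dt + (\text{controlled crossed term}),
\end{equation}
where the leading contribution $-\iint r u^*y_k^2$ is strictly negative since $u^*>0$ in $(0;T)\times(0;R)$. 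A careful bookkeeping of the crossed term, using the radial monotonicity of $u^*$ and $\overline p$ together with the explicit form of the ODEs on $y_k$ and $z_k$ (they reduce to two-point boundary value problems in $r$ for each $t$, up to time dependence), shows that $A_k<0$ for every $k$, and in particular $\omega_k<B<0$. Combining the uniform high-frequency bound with the finite-mode negativity gives the desired uniform constant $c_0:=\tfrac{r^*}{2\pi r^*}\inf_k(-\omega_k)>0$.

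The main obstacle is the analysis of the crossed term in the representation of $A_k$: unlike in the elliptic setting of \cite{MazariQuantitative}, the lack of a maximum principle for backward parabolic equations against their forward counterparts means one cannot simply compare $z_k$ with $y_k$ pointwise. The fix is to work modewise and use the time-reversal $\tilde z_k(t,\cdot):=z_k(T-t,\cdot)$ to obtain forward parabolic equations for both quantities and apply a Gronwall-type argument in $r$ for each fixed $t$. This is the parabolic analog of the elliptic comparison principle the paper refers to, and carrying it out rigorously in the presence of the interface $\{r=r^*\}$ is the technical core of the proof.
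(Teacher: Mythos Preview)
Your reduction via Lemma \ref{Le:Diagonalisation} and the observation that $B=\int_0^T\partial_r\overline p(t,r^*)\,dt<0$ are fine, and the high-frequency strategy (energy estimates forcing $A_k\to 0$, hence $\omega_k\to B<0$) is a reasonable alternative to what the paper does. The problem is your treatment of the low frequencies.

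Your claim that $A_k=\int_0^T z_k(t,r^*)\,dt<0$ for every $k\geq 1$ is false. In fact $z_1\geq 0$ on $(0;T)\times(0;R)$: since $y_1\geq 0$ (this follows from the jump condition and multiplying by $y_1^-$), the equation on $z_1$ reads $\partial_t z_1+\frac1r\partial_r(r\partial_r z_1)\leq \frac1{r^2}z_1$ with zero terminal and boundary data, and the parabolic maximum principle (after time reversal) forces $z_1\geq 0$; strong maximum principle then gives $z_1(t,r^*)>0$ for $t<T$, so $A_1>0$. Thus $\omega_1<0$ cannot be obtained from $A_1<0$ and $B<0$ separately: one must combine the two pieces. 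Relatedly, the ``symmetric representation'' you write cannot be correct as stated, since $u^*$ does not appear in the equations for $y_k$ or $z_k$; multiplying those two equations by one another and subtracting will never produce a term $-\iint r\,u^*\,y_k^2$.

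The paper's route avoids this pitfall entirely. It does not split $\omega_k=A_k+B$. Instead it proves \emph{monotonicity} $\omega_k\leq \omega_1$ for all $k\geq 1$ by a chain of comparison principles ($y_1\geq 0\Rightarrow y_k\leq y_1\Rightarrow z_1\geq 0\Rightarrow z_k\leq z_1$), and then shows $\omega_1<0$ by considering the \emph{combined} quantity $\overline\Phi:=\partial_r\overline p+z_1$, which satisfies $\partial_t\overline\Phi+\frac1r\partial_r(r\partial_r\overline\Phi)\geq \frac1{r^2}\overline\Phi$ (using $\partial_r u^*<0$) with $\overline\Phi(t,R)\leq 0$ and $\overline\Phi(T,\cdot)=0$, so $\overline\Phi\leq 0$ and strictly negative on a set of positive measure, whence $\omega_1=\int_0^T\overline\Phi(t,r^*)\,dt<0$. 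The key structural point you are missing is that the negativity comes from the \emph{sum} $z_1+\partial_r\overline p$, not from either summand alone.
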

\begin{proof}[Proof of Proposition \ref{Pr:Coercivity}]
Given Lemma \ref{Le:Diagonalisation} it suffices to prove that there exists a constant $c_0>0$ such that 
\begin{equation}\label{Eq:Goal}
\forall k\in \N^*\,, \omega_k\leq -c_0.
\end{equation}
Equation \eqref{Eq:Goal} is obviously provided the following Claim holds:
\begin{claim}\label{Cl:Paty}
The sequence $\{\omega_k\}_{k\in \N^*}$ is decreasing. Furthermore, $\omega_1<0$.
\end{claim}
Indeed, it then suffices to take $c_0=-\frac2{r^*}\omega_1$ and we can then bound 
\begin{equation}\mathcal L_{\B^*}''(\B^*)[\Phi,\Phi]=\frac{r^*}2\sum_{k=1}^\infty \omega_k(\alpha_k^2+\beta_k^2)\leq -c_0\sum_{k=1}^\infty (\alpha_k^2+\beta_k^2)=-c_0\int_{\partial \B^*}\left(\Phi\cdot\nu\right)^2.\end{equation} We now focus on the proof of this last Claim.
\begin{proof}[Proof of Claim \ref{Cl:Paty}]
Let us note that from Lemma \ref{Le:Diagonalisation} we have
\begin{equation}
\forall k \in \N^*\,, \omega_{k}-\omega_1=\int_0^T \left(z_{k}-z_1\right)(t,r^*)dt.
\end{equation}
The fact that $\{\omega_k\}_{k\in \N}$ is decreasing is thus guaranteed provided the following estimate holds:
\begin{equation}\label{Eq:Zk}
\forall k\in \N^*\,, z_{k}\leq z_1.\end{equation}
\eqref{Eq:Zk} will be proved using a comparison principle. If we want to compare $z_k$ and $z_1$, we need to compare, for any $k\in \N^*$, $y_k$ and $y_1$.
 The first thing to observe is that
\begin{equation}\label{Eq:Y1positif}
y_1\geq 0.\end{equation}
\begin{proof}[Proof of \eqref{Eq:Y1positif}]
We already know that $y_1$ satisfies
\begin{equation}
\begin{cases}
\frac{\partial y_1}{\partial t}-\frac1r\frac{\partial }{dr}(r\frac{\partial y_1}{dr})=-\frac{1}{r^2}y_1\text{ in }(0;R)\,, 
\\ \left\llbracket y_1'\right\rrbracket(r^*)=-1\,, 
\\ y_1(R,\cdot)=0\,, 
\\ y_1'(0,\cdot)=0.\end{cases}\end{equation} We consider the negative part $y_1^-$ of $y_1$. We have  $$\llbracket(y_1^-)'\rrbracket(t,r^*)\begin{cases}=0\text{ if $y_1(t,r^*)>0$,}\\ =1\text{ if $y_1(t,r^*)<0$}, \\>0 \text{ if $y_1(t,\cdot)$ locally changes sign at $r^*$}.\end{cases}$$ In any case, we obtain 
\begin{equation}\left\llbracket (y_1^-)'\right\rrbracket\geq 0.\end{equation}
Multiplying the equation by $y_1^-$ and integrating by parts in space and time as in the proof of Proposition \ref{Pr:Radial} gives 
\begin{equation}\frac12\int_\O (y_1^-)^2(T,\cdot)+\iint_\OT |\n y_1^-|^2+\iint_{(0;T)\times\partial \B^*}y_1^-\left\llbracket (y_1^-)'\right\rrbracket+\iint_\OT \frac1{r}(y_1^-)^2=0.\end{equation} As a conclusion, $y_1^-\equiv 0$, which concludes the proof. 

\end{proof}
Using this information, we can now prove:
\begin{equation}\label{Eq:Comparison1K}
\forall k\in \N^*\,,  y_k\leq y_1.\end{equation}

\begin{proof}[Proof of \eqref{Eq:Comparison1K}]
Let us define, for any $k\in \N^*$, 
\begin{equation}\Psi_k:=y_k-y_1.\end{equation}
Then, in $\OT$, $\Psi_k$ solves 
\begin{equation}
\frac{\partial \Psi_k}{\partial t}-\Delta \Psi_k=-\frac{k^2}{r^2}y_k+\frac1{r^2}y_1\leq -\frac{k^2}{r^2} (y_k-y_1)=-\frac{k^2}{r^2}\Psi_k,\end{equation} where the last inequality comes from the fact that $y_1$ is non-negative. Furthermore, 
\begin{equation}
\left\llbracket \Psi_k'\right\rrbracket(t,r^*)=0,
\end{equation}
so that, following exactly the main line of reasoning, we obtain
\begin{equation}\Psi_k\leq 0,\end{equation} which concludes the proof.

\end{proof}

We now pass to the next step:
\begin{equation}\label{Eq:Z1positive}
z_1\geq 0\text{ in }\OT.\end{equation}
\begin{proof}[Proof of \eqref{Eq:Z1positive}]
The function $z_1$ satisfies

\begin{equation}\begin{cases}
\frac{\partial z_1}{\partial t}+\frac1r\frac{\partial }{\partial r}(r\frac{\partial z_1}{\partial r})=\frac{1}{r^2}z_1-y_1\text{ in }(0;T)\times (0;R)\,, 
\\ z_1(R,\cdot)=0\,, 
\\ z_1(T,0)=\partial_r z_1(t,0)=0.\end{cases}\end{equation}
Since $y_1\geq 0$ from \eqref{Eq:Y1positif} $z_1$ solves, in particular,

\begin{equation}\label{Eq:Z1}\begin{cases}
\frac{\partial z_1}{\partial t}+\frac1r\frac{\partial }{\partial r}(r\frac{\partial z_1}{\partial r})\leq\frac{1}{r^2}z_1\text{ in }(0;T)\times (0;R)\,, 
\\ z_1(R,\cdot)=0\,, 
\\ z_1(T,0)=\partial_r z_1(t,0)=0.\end{cases}\end{equation}
Let us now define $\overline z_1=z_1(T-t,\cdot)$. Straightforward computations show that $\overline z_1$ solves \begin{equation}\partial_t \overline z_1-\frac{1}r\partial_r(r\partial_r\overline z_1)\geq -\frac1{r^2}\overline z_1.\end{equation} Multiplying this identity by $\overline z_1^-$ and integrating by parts,  we obtain in the same way
\begin{equation}z_1\geq 0,\end{equation} as claimed.
\end{proof}
We are now in a position to prove \eqref{Eq:Zk}:
\begin{proof}[Proof of \eqref{Eq:Zk}]
We define, for any $k\in \N$, $\mathscr Z_k:=z_k-z_1$. It is clear that $\mathscr Z_k$ solves
\begin{equation}
\frac{\partial \mathscr Z_k}{\partial t}+\frac1r\frac{\partial }{\partial r}\left(r\frac{\partial  \mathscr Z_k}{\partial r}\right)=\frac{k^2}{r^2}z_k-\frac1{r^2}z_1+y_1-y_k.\text{ in }(0;T)\times (0;R)
\end{equation}
From Estimate \eqref{Eq:Comparison1K} there holds
\begin{equation}
\frac{\partial \mathscr Z_k}{\partial t}+\frac1r\frac{\partial }{\partial r}\left(r\frac{\partial  \mathscr Z_k}{\partial r}\right)\geq\frac{k^2}{r^2}z_k-\frac1{r^2}z_1\text{ in }(0;T)\times (0;R)
\end{equation}
and so, from Estimate \eqref{Eq:Z1positive} we get
\begin{equation}
\frac{\partial \mathscr Z_k}{\partial t}+\frac1r\frac{d}{dr}\left(r\frac{d \mathscr Z_k}{dr}\right)\geq\frac{k^2}{r^2}z_k-\frac{k^2}{r^2}z_1=\frac{k^2}{r^2}\mathscr Z_k\text{ in }(0;T)\times (0;R).
\end{equation}
From the same reasoning, we obtain 
\begin{equation}\mathscr Z_k\leq 0\end{equation} and so
\begin{equation}z_k\leq z_1\text{ in }(0;T)\times (0;R).\end{equation}

\end{proof}
The proof of the first part of Claim \ref{Cl:Paty} is thus finished, and it hence remains to prove that 
\begin{equation}\label{Eq:Omega1}
\omega_1<0.\end{equation}
\begin{proof}[Proof of \eqref{Eq:Omega1}]
We recall that $$\omega_1=\int_0^T z_1(t,r^*)dt+\int_0^T \frac{\partial \overline p}{\partial r}(t,r^*)dt.$$ First of all, is is easy to see that $p$ is non-negative.

Let us define $\overline \p:=\frac{\partial \overline p}{\partial r}$. Straightforward computations show that $\overline \p$ solves 
\begin{equation}\label{Eq:Phi}
\begin{cases}
\frac{\partial \op}{\partial t}+\frac1r\frac{\partial }{\partial r}\left(r\frac{\partial \op}{\partial r}\right)=-\frac{\partial \overline u}{\partial r}+\frac1{r^2}\op\text{ in }(0;T)\times (0;R)\,, 
\\ \op(t,R)\leq 0\,, 
\\ \op(T,\cdot)\equiv 0.
\end{cases}\end{equation}

If we define $\overline \Phi:=\op +z_1$ we thus have
\begin{equation}
\frac{\partial \overline \Phi}{\partial t}+\frac1r\frac{\partial }{\partial r}\left(r\frac{\partial \overline \Phi}{\partial r}\right)=\frac1{r^2}\overline \Phi-\frac{\partial \overline u}{\partial r}\geq \frac1{r^2}\overline \Phi.
\end{equation}
The last inequality comes from Proposition \ref{Pr:Radial}. Furthermore we have $\overline \Phi(t,R)\leq 0$. As a consequence, we have 
\begin{equation}\overline \Phi\leq 0\text{ in }(0;T)\times \O.\end{equation} Furthermore, we necessarily have $\overline \Phi(t,r^*)<0$ in a subset of positive measure of $(0;T)$, for otherwise we have $\frac{\partial \overline u}{\partial r}(t,r^*)= 0$ on this subset, which is absurd given Proposition \ref{Pr:Radial}. As a conclusion, we obtain
\begin{equation}\omega_1=\int_0^T\overline \Phi(t,r^*)dr<0,\end{equation} as claimed.
\end{proof}
\end{proof}
\end{proof}

With this Proposition available, we are in a position to prove Proposition \ref{Pr:NormalDeformation}. Let us recall that, for a normal deformation $\Phi\in W^{2,p}$ we have defined
\begin{equation}j_\Phi(\xi):=\mathcal L_{\B^*}(\B^*_{t\Phi})+C(\operatorname{Vol}(\B_{t\Phi}^*)-V_0)^2.\end{equation} Since $\B^*$ is a critical shape we obtain 
\begin{equation}j_\Phi'(0)=0\end{equation} so that the Taylor-Lagrange formula with integral remainder writes, in the case where $\operatorname{Vol}(\B^*_\Phi)=V_0$, 
\begin{equation}\mathcal L_{\B^*}(\B_\Phi^*)-\mathcal L_{\B^*}(\B^*)=\int_0^1j''\leq j''_\Phi(0)+\int_0^1\left|j_\Phi''(\xi)-j_\Phi''(0)\right| d\xi.\end{equation}

The key is now to prove the following Lemma:
\begin{lemma}\label{Le:Control}
There exists a modulus of continuity, that is, a continuous function $\eta:\R_+\to \R_+$ such that $\omega(0)=0$, such that 
\begin{equation}|j_\Phi''(t)-j_\Phi''(0)|\leq \eta\left(\Vert \Phi\Vert_{W^{2,p}}\right) \Vert \Phi \cdot \nu \Vert_{L^2(\partial \B^*)}^2.\end{equation}
\end{lemma}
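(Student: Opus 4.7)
The strategy is a pull-back to the reference domain $\partial \B^*$ combined with continuous dependence of the direct and adjoint states on the perturbation. We work throughout under Assumption \eqref{As:Normal} and denote $\B^*_\xi := (\mathrm{Id} + \xi \Phi)(\B^*)$, $u_\xi := u_{\mathds 1_{\B^*_\xi}}$, and $p_\xi := p_{\mathds 1_{\B^*_\xi}}$.

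\textbf{Step 1: explicit formula for $j_\Phi''(\xi)$.} The plan is to first write down a workable expression for $j_\Phi''(\xi)$. Arguing as in the derivation of \eqref{Eq:ShapeHessianBall} but now at the deformed shape $\B^*_\xi$, one obtains a formula of the form
\begin{equation}
j_\Phi''(\xi) = \int_{\partial \B^*_\xi} (\Phi_\xi\cdot \nu_\xi) \left(\int_0^T p_\xi'\right) + \int_{\partial \B^*_\xi}(\Phi_\xi\cdot \nu_\xi)^2 \mathscr{R}_\xi + \text{volume terms},
\end{equation}
where $\nu_\xi$ is the outer normal to $\partial \B^*_\xi$, $\Phi_\xi$ is the pushed vector field, $p_\xi'$ is the shape derivative of the adjoint at $\B^*_\xi$ in direction $\Phi$, and $\mathscr{R}_\xi$ involves $\int_0^T p_\xi$, $\int_0^T \partial_{\nu_\xi} p_\xi$, and the mean curvature $\mathscr{H}_{\partial \B^*_\xi}$. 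The volume terms coming from $C(\mathrm{Vol}(\B^*_\xi) - V_0)^2$ contribute smoothly in $\xi$ and can be handled trivially.

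\textbf{Step 2: pull-back to the reference sphere.} Using the diffeomorphism $T_\xi = \mathrm{Id}+\xi\Phi$, rewrite every surface integral in Step 1 as an integral over $\partial \B^*$ against a Jacobian $J_\xi = |\mathrm{cof}(DT_\xi) \nu| \in W^{1,p}(\partial \B^*)$. By standard Sobolev calculus on diffeomorphisms (see e.g.\ \cite[Ch.\ 5]{HenrotPierre}), $J_\xi$, $\nu_\xi \circ T_\xi$ and $\mathscr{H}_{\partial \B^*_\xi} \circ T_\xi$ depend continuously on $\xi$ in $L^\infty(\partial \B^*)$ with modulus controlled by $\Vert \Phi\Vert_{W^{2,p}}$, and they equal their value at $\xi=0$ plus a remainder of size $O(\Vert \Phi\Vert_{W^{2,p}})$. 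The factor $(\Phi_\xi\cdot\nu_\xi)^2$ pulled back is uniformly comparable to $(\Phi\cdot \nu)^2$ on $\partial \B^*$.

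\textbf{Step 3: continuous dependence of $u_\xi$ and $p_\xi$.} The key estimate is that the maps $\xi \mapsto u_\xi$ and $\xi \mapsto p_\xi$ are continuous with values in, say, $\mathscr{C}^0([0;T]; \mathscr{C}^{1,\alpha}(\overline{\B^*_\xi}^c \cup \overline{\B^*_\xi}))$, with modulus of continuity controlled by $\Vert \Phi\Vert_{W^{2,p}}$. This follows from the uniform parabolic regularity of Proposition \ref{Pr:Regularity} applied to $u_\xi - u_0$ after pull-back, together with Schauder estimates on the adjoint equation analogous to those used in the proof of Theorem \ref{Th:Td}. Similarly, the shape derivative $p_\xi'$ in direction $\Phi$ solves a transmission problem whose jump datum on $\partial \B^*_\xi$ equals $-(\Phi\cdot\nu_\xi)$; the dependence of $p_\xi'$ on $\xi$ is thus controlled by $\Vert \Phi\Vert_{W^{2,p}}$ in the same sense.

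\textbf{Step 4: assembling the estimate.} Combining Steps 2 and 3, each of the three types of terms appearing in $j_\Phi''(\xi) - j_\Phi''(0)$ splits as
\begin{equation}
\int_{\partial \B^*}(\Phi\cdot \nu)^2 \, A_\xi + \int_{\partial \B^*} (\Phi\cdot \nu) \, B_\xi,
\end{equation}
where $A_\xi, B_\xi$ are scalar functions on $\partial \B^*$ with $\Vert A_\xi \Vert_{L^\infty}, \Vert B_\xi\Vert_{L^\infty(\partial \B^*)} \le \eta(\Vert \Phi\Vert_{W^{2,p}})$, $\eta$ being a universal modulus of continuity vanishing at $0$. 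The term linear in $(\Phi\cdot \nu)$ comes with a factor $B_\xi$ that itself is proportional to $\Phi\cdot \nu$ (because $p_\xi'$ is linear in $\Phi$), so by Cauchy--Schwarz it is bounded by $\eta(\Vert \Phi\Vert_{W^{2,p}})\Vert \Phi\cdot\nu\Vert_{L^2(\partial \B^*)}^2$. The quadratic terms are bounded directly.

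\textbf{Main obstacle.} The delicate step is Step 3, namely establishing that the shape derivative $p_\xi'$ of the adjoint on the moving domain depends continuously on $\xi$, since its defining equation \eqref{Eq:AdjointPrime} has a right-hand side $u_\xi'$ which itself solves a parabolic transmission problem on $\B^*_\xi$ with Dirac-type datum on $\partial \B^*_\xi$. The pull-back to $\partial \B^*$ turns this into a family of parabolic problems with $\xi$-dependent coefficients, and one must verify that the resulting normal traces converge in $L^2(\partial \B^*)$ at a rate controlled by $\Vert \Phi\Vert_{W^{2,p}}$; this is where the $W^{2,p}$-regularity of $\Phi$ (as opposed to only $W^{1,\infty}$) enters crucially, providing enough regularity of $T_\xi$ to push the Schauder estimates through.
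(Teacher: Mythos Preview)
Your proposal is correct and follows essentially the same approach as the paper: pull back to $\partial\B^*$, invoke the geometric continuity estimates of \cite{DambrineLamboley} for Jacobians, normals and curvatures, establish continuous dependence of the direct and adjoint (linearised) states on $\xi$, and assemble via trace inequalities; you also correctly single out the control of $p'_\xi-p'_0$ as the delicate step. The only notable difference is that the paper handles the linearised states $\hat u'_\tau,\hat p'_\tau$ entirely by $L^2$-energy estimates (testing the equation on $w'_\tau:=\hat u'_\tau-u'_{\B^*}$ against itself and using $W^{1,2}$-trace continuity to get $\int_0^T\Vert \hat p'_\tau-p'_{\B^*}\Vert_{W^{1,2}(\O)}^2\leq M\Vert\Phi\Vert_{W^{2,p}}^2\Vert\Phi\cdot\nu\Vert_{L^2(\partial\B^*)}^2$) rather than Schauder, which is slightly more direct here since both the transmission data and the target norm are $L^2$.
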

Indeed, Lemma \ref{Le:Control} implies Proposition \ref{Pr:NormalDeformation} in the following way: assuming it holds then
\begin{align}\mathcal L_{\B^*}(\B_\Phi^*)-\mathcal L_{\B^*}(\B^*)&=\int_0^1j''\leq j''_\Phi(0)+\int_0^1\left|j_\Phi''(\xi)-j_\Phi''(0)\right| d\xi
\\ &\leq -c_0\Vert \Phi\cdot\nu\Vert_{L^2(\partial \B^*)}^2+ \eta\left(\Vert \Phi\Vert_{W^{2,p}}\right) \Vert \Phi \cdot \nu \Vert_{L^2(\partial \B^*)}^2
\\&\leq -\frac{c_0}2\Vert \Phi\cdot\nu\Vert_{L^2(\partial \B^*)}^2 \text{ for $\Vert \Phi\Vert_{W^{2,p}}$ small enough}
\\&\leq -\frac{c_0}2\frac{1}{\operatorname{Per(\partial \B^*)}}\left(\int_{\partial \B^*} |\Phi \cdot \nu|\right)^2 \text{ by the Cauchy-Schwarz inequality}
\\&\leq -\tilde c_0 \operatorname{Vol}\left(\B^*_\Phi\Delta \B^*\right)^2.\end{align}

 The proof of Lemma \ref{Le:Control} is extremely similar to the proof of \cite[Proposition 23]{MRB2020} and is mostly a technical adaptation of \cite{DambrineLamboley}. For this reason, we postpone it to Appendix \ref{Ap:Control} and briefly sketch here why this $L^2$ norm of $\Phi\cdot \nu$ is, in contrast to the $H^{\frac12}$ usually required in shape optimisation \cite{DambrineLamboley}, the optimal norm here. If we consider, for instance, at at given shape $E$ the second order shape derivative of the Lagrangian, we have
 
\begin{equation}\mathcal L_{\B^*}''(E)[\Phi,\Phi]=\underbrace{\iint_{(0;T)\times \partial E}p'\left(\Phi\cdot \nu\right)}_{=I_1}+\underbrace{\int_{\partial E}\left(\Phi\cdot \nu\right)^2\left(\mathscr H \int_{0}^T p_E+\int_0^T\frac{\partial p_E}{\partial \nu}\right)-\overline \Psi_{\partial \B^*}\int_{\partial E}\mathscr H \left(\Phi\cdot \nu\right)^2}_{=:I_2}\end{equation} where:
\begin{enumerate}
\item $\mathscr H$ is the mean curvature of $\partial E$,
\item $p_E$ solves
\begin{equation}\begin{cases}
\frac{\partial p_E}{\partial t}+\Delta p_E=-u_E\text{ in }\OT\,, 
\\ p_E(T,\cdot)=0\,, 
\\ p_E(t,\cdot)=0\text{ on }(0;T)\times \partial \O,
\end{cases}
\end{equation}

\item $u'$ solves 
\begin{equation}\begin{cases}
\frac{\partial u'}{\partial t}-\Delta u'=0\text{ in }\OT\,, 
\\ u'(0,\cdot)=0\,, 
\\\llbracket \partial_\nu u'\rrbracket=-1\text{ on }(0;T)\times \partial E,
\\ u'(t,\cdot)=0\text{ on }(0;T)\times \partial \O,
\end{cases}
\end{equation}
\item  $p'$ solves

\begin{equation}\begin{cases}
\frac{\partial p'}{\partial t}+\Delta p'=-u'\text{ in }\OT\,, 
\\ p'(T,\cdot)=0\,, 
\\ p'(t,\cdot)=0\text{ on }(0;T)\times \partial \O,
\end{cases}
\end{equation}
\item and 
$$\left.\overline \Psi_{\partial \B^*}=\int_0^T p^*(t,\cdot)\right|_{\partial \B^*}$$ is the Lagrange multiplier associated with the volume constraint.
\end{enumerate}

 Now, by the regularity estimates of Proposition \ref{Pr:Regularity} and by standard Schauder estimates, it is natural to expect that 
 \begin{equation}
 \Vert I_2\Vert\leq M \left\Vert \Phi \cdot \nu \right\Vert_{L^2(\partial E)}^2.
 \end{equation}
 To prove that the same estimate holds for $I_1$, it suffices, by continuity of the trace, to obtain 
 \begin{equation}\iint_\OT |\n p'|^2\leq M \int_{\partial E^*}(\Phi\cdot\nu)^2.\end{equation}  However, this just follows from standard parabolic estimates, provided we can prove that 
 \begin{equation}\label{Eq:Guan}
 \iint_\OT (u')^2\leq M \int_{\partial E^*}(\Phi\cdot\nu)^2.\end{equation} To prove \eqref{Eq:Guan}, we use $u'$ as a test function in the weak equation on $u'$    and obtain, by the Cauchy-Schwarz inequality and the continuity of the trace,
 \begin{align}
 \frac{\partial}{\partial t}\int_\O (u')^2(t,\cdot)+\int_\O |\n u'|^2&=\int_{\partial E}(\Phi\cdot \nu)u'
 \\&\leq M\Vert \Phi \cdot\nu\Vert_{L^2(\partial E)}\Vert \n u'\Vert_{L^2(\O)}.
 \end{align}
 Integrating this inequality in time yields the required result and we hence obtain 
 \begin{equation}\left|\mathcal L_{\B^*}''(E)[\Phi,\Phi]\right|\leq M\Vert \Phi\cdot \nu \Vert_{L^2(\partial E)}^2.\end{equation}
 As a consequence, the $L^2$ norm should be the optimal coercivity norm.

\subsection{Quantitative bathtub principle: using the convexity of the functional}

In this section, we will fully exploit the convexity of the functional. We first heuristically explain how we are going to make use of it.
\paragraph{Heuristics}
Let us assume that we are working with a competitor $f\in {\overline {\mathcal M}}(\O)$, and let us define $p_f$ as the adjoint state associated to $f$ (solution of \eqref{Eq:Adjoint}). Hence, for an admissible perturbation $h$ at $f$ (i.e, such that $f+th\in \overline{\mathcal M}(\O)$ for any $t\geq 0$ small enough), the derivative of $\mathcal J_T$ at $f$ in the direction $h$ is given by (Proposition \ref{Pr:Adjoint})
\begin{equation}\dot{\mathcal J_T}(f)[h]=\int_\O h(x)\left(\int_0^T p_f(t,x)dt\right)dx.\end{equation}
Since $\mathcal J_T$ is convex (Proposition \ref{Pr:Convexity}), we have
\begin{equation}\label{Eq:Heuristics}
\mathcal J_T(f+h)-\mathcal J_T(f)\geq \dot{\mathcal J_T}(f)[h].
\end{equation}
As a consequence, let us assume that $f=\mathds 1_E$. In order to maximise the right hand side of \eqref{Eq:Heuristics}, we need to choose $h$ such that, defining $\overline \Psi:=\int_0^T p_f(t,\cdot)dt$, and choosing $\overline c>0$ such that $\operatorname{Vol}\left(\{\overline \Psi>\overline c\}\right)=V_0$ (assuming this set is uniquely defined and regular), 
\begin{equation}f+h=\mathds 1_{\{\overline \Psi>\overline c\}}=\mathds 1_{\overline E}\end{equation} and so we obtain the lower bound 
\begin{equation}\label{Eq:FWF}\mathcal J_T(\mathds 1_{\overline E})-\mathcal J_T(\mathds 1_E)\geq \int_{\overline E} \overline \Psi-\int_{E}\overline \Psi.\end{equation} Now, as we will see, when $f$ is close enough to $f^*$, $\overline E$ should be a normal deformation of $\B^*$, and the only thing left is thus to quantify 
\begin{equation}\label{Eq:Start}
 \int_{\overline E} \overline \Psi-\int_{E}\overline \Psi.
\end{equation}
Indeed, using \eqref{Eq:FWF} we obtain
\begin{equation}\mathcal J_T(\mathds 1_{\B^*})-\mathcal J_T(\mathds 1_E)\geq J_T(\mathds 1_{\B^*})-\mathcal J_T(\mathds 1_{\overline E})+\mathcal J_T(\mathds 1_{\overline E})-\mathcal J_T(\mathds 1_{ E})\geq C \operatorname{Vol}(E^*\Delta \overline E)^2+\mathcal J_T(\mathds 1_{\overline E})-\mathcal J_T(\mathds 1_{ E}).\end{equation} Here, $C>0$ is given by Proposition \ref{Pr:NormalDeformation}.

Since $\overline f:=\mathds 1_{\overline E}$ is a maximiser of $T_{\overline \Psi}:f\mapsto \int_\O f\overline \Psi$ in $\overline{\mathcal M}(\O)$, it turns out that estimating \eqref{Eq:Start} amounts to providing a quantitative estimate for the linear optimisation problem 
\begin{equation}\sup_{f\in \overline{\mathcal M}(\O)}T_{\overline \Psi}(f)\end{equation} which is exactly the quantitative version of the bathtub principle.

The goal of the present paragraph is to give a uniform bathtub principle that was presented in a slightly different form in the section devoted to Theorem \ref{Th:Td}, see Proposition \ref{Pr:BathtubTD} above.

\begin{proposition}\label{Pr:Bathtub}
Let $\beta>0$ and let $\{\psi_i\}_{i\in I}\subset  \mathscr C^{1,\beta}(\O;\R_+)^I$ be a closed subset of $\mathscr C^{1,\beta'}(\O)$ for some $\beta'<\beta$. We assume that:
\begin{enumerate}
\item For every $i\in I$ there exists a unique $c_i$ such that, up to a set of measure 0, 
\begin{equation}\O_i:=\{\psi_i>c_i\}=\{\psi_i\geq c_i\}\end{equation} and 
\begin{equation}\label{Eq:Downtown}\forall i \in I\,, \operatorname{Vol}(\O_i)=V_0\,, \overline L=\sup_{i\in I}\operatorname{Per}(\O_i)<+\infty.\end{equation}
We define, for any $i\in I$, 
$$\overline f_i:=\mathds 1_{\O_i}.$$
\item There exists $M>0$ such that 
\begin{equation}\sup_{i\in I}\Vert \psi_i\Vert_{\mathscr C^{1,\beta}}\leq M_I.\end{equation}
\item There exists $\underline \mu>0$ such that 
\begin{equation}\label{Eq:Feather}\inf_{i\in I}\inf_{\partial \O_i}\left\{-\frac{\partial \psi_i}{\partial\nu}\right\}\geq \underline \mu.\end{equation}
\end{enumerate}
Then there exists a constant $\overline \omega>0$  such that \begin{equation}
\forall i\in I\,, \forall f\in {\overline{\mathcal M}}(\O)\,, \int_\O (\overline f_i-f)\psi_i\geq \overline \omega \Vert \overline f_i-f\Vert_{L^1(\O)}^2.
\end{equation}
\end{proposition}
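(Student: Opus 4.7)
The plan is to adapt the contradiction argument already used in Proposition \ref{Pr:BathtubTD}, replacing the radial structure by the uniform transversality assumption \eqref{Eq:Feather}. Assume by contradiction that there exist sequences $\{i_k\}\subset I$ and $\{f_k\}\subset \overline{\mathcal M}(\O)\setminus\{\overline f_{i_k}\}$ such that
\begin{equation*}
\frac{\int_\O(\overline f_{i_k}-f_k)\psi_{i_k}}{\Vert \overline f_{i_k}-f_k\Vert_{L^1(\O)}^2}\underset{k\to\infty}\rightarrow 0.
\end{equation*}
By the uniform $\mathscr C^{1,\beta}$ bound and the closure of $\{\psi_i\}_{i\in I}$ in $\mathscr C^{1,\beta'}$, we may extract a subsequence with $\psi_{i_k}\to \psi_\infty$ in $\mathscr C^{1,\beta'}(\O)$, where $\psi_\infty$ still belongs to the family and $-\partial_\nu\psi_\infty\geq \underline\mu$ on its unique level set $\O_\infty=\{\psi_\infty>c_\infty\}$, $\operatorname{Vol}(\O_\infty)=V_0$. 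The uniform transversality \eqref{Eq:Feather} together with the $\mathscr C^1$ convergence guarantees that $c_{i_k}\to c_\infty$ and $\overline f_{i_k}=\mathds 1_{\O_{i_k}}\to \overline f_\infty=\mathds 1_{\O_\infty}$ in $L^1(\O)$. Up to subsequence, $f_k\rightharpoonup f_\infty$ weakly-$\ast$ in $L^\infty(\O)$ with $f_\infty\in\overline{\mathcal M}(\O)$.

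Following the dichotomy in the proof of Proposition \ref{Pr:BathtubTD}, set $\delta_k:=\Vert \overline f_{i_k}-f_k\Vert_{L^1(\O)}$. If $\delta_k\not\to 0$, then arguing as in \cite[Proposition 22]{MRB2020} the class $\{\Vert f-\overline f_\infty\Vert_{L^1}\geq \delta_\infty/2\}$ is closed under weak-$\ast$ convergence, so $f_\infty\neq \overline f_\infty$; since $\overline f_\infty$ is the unique maximiser of $f\mapsto\int_\O f\psi_\infty$ in $\overline{\mathcal M}(\O)$ by the strict bathtub principle (the non-degeneracy set $\{\psi_\infty=c_\infty\}$ has zero measure thanks to \eqref{Eq:Feather}), the ratio passes to a strictly positive limit, contradicting the assumption.

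The main difficulty is the degenerate case $\delta_k\to 0$. Here, as in Claim \ref{Cl:Monster}, one reduces to explicit competitors by replacing $f_k$ with a minimiser of
\begin{equation*}
\min\left\{\int_\O(\overline f_{i_k}-g)\psi_{i_k}\,,\ g\in\overline{\mathcal M}(\O)\,,\ \Vert g-\overline f_{i_k}\Vert_{L^1(\O)}=\delta_k\right\},
\end{equation*}
which, by the bathtub principle applied to $\psi_{i_k}$, takes the form $g_k=\mathds 1_{\{\psi_{i_k}>c_{i_k}^+\}}+\mathds 1_{\{c_{i_k}^-<\psi_{i_k}<c_{i_k}\}}$ for suitable thresholds $c_{i_k}^-<c_{i_k}<c_{i_k}^+$ with $\{c_{i_k}^-<\psi_{i_k}\leq c_{i_k}\}\Delta\O_{i_k}$ and $\{\psi_{i_k}>c_{i_k}^+\}$ of volume $\delta_k/2$ each. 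Using the uniform $\mathscr C^1$ bound on $\psi_{i_k}$, the uniform perimeter bound \eqref{Eq:Downtown}, and the transversality \eqref{Eq:Feather}, a tubular neighbourhood argument (or equivalently the coarea formula applied to the signed distance to $\partial\O_{i_k}$) gives, for $\delta_k$ small enough,
\begin{equation*}
\left|c_{i_k}^\pm-c_{i_k}\right|\geq c\underline\mu\,\frac{\delta_k}{\overline L},
\end{equation*}
so that by a Taylor expansion $\psi_{i_k}(x)-c_{i_k}$ is controlled from below in absolute value by $\underline\mu\cdot\mathrm{dist}(x,\partial\O_{i_k})/2$ on the exchanging region. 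Integrating,
\begin{equation*}
\int_\O(\overline f_{i_k}-g_k)\psi_{i_k}\geq \omega_0\underline\mu\,\delta_k^2,
\end{equation*}
for a uniform constant $\omega_0>0$ depending only on $\overline L,\,M,\,\underline\mu$, which again contradicts the assumption. The crux of the argument, and the step requiring the most care, is upgrading the pointwise transversality \eqref{Eq:Feather} to the uniform tubular-neighbourhood statement above: it is there that all three hypotheses (uniform $\mathscr C^{1,\beta}$, uniform perimeter, and uniform lower bound on $-\partial_\nu\psi_i$) are simultaneously used.
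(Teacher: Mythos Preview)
Your argument is correct (modulo a few cosmetic slips in the bookkeeping of volumes) but it follows a genuinely different route from the paper.

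The paper does not work directly with tubular neighbourhoods of the level sets $\partial\O_i$. Instead, it reduces everything to the radially symmetric situation already handled in Proposition \ref{Pr:BathtubTD} by means of the Schwarz rearrangement: since $\O_i=\{\psi_i>c_i\}$ is a superlevel set, equimeasurability gives $\int_\O \overline f_i\psi_i=\int_\O f^*\psi_i^\#$, and the constrained bathtub principle shows that for any $f$ with $\Vert f-\overline f_i\Vert_{L^1}=\delta$ one has $\int_\O f\psi_i\le \int_\O \mathds 1_{\mathbb A_\delta}\psi_i^\#$. Hence $\int_\O(\overline f_i-f)\psi_i\ge \int_\O(f^*-\mathds 1_{\mathbb A_\delta})\psi_i^\#$. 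The transversality assumption \eqref{Eq:Feather} together with the uniform perimeter bound \eqref{Eq:Downtown} is then used, via the coarea identity $-\mathscr L_i'(c_i)=\int_{\partial\O_i}|\nabla\psi_i|^{-1}$, to produce a uniform lower bound $|d\psi_i^\dagger/dr|\ge C\underline\mu$ in a neighbourhood of $r^*$, so that the hypotheses of Proposition \ref{Pr:BathtubTD} are met by the family $\{\psi_i^\#\}$. The conclusion then follows directly from that proposition.

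Your approach avoids the rearrangement step and deals with the layers $\{c_{i_k}<\psi_{i_k}\le c_{i_k}^+\}$ and $\{c_{i_k}^-<\psi_{i_k}\le c_{i_k}\}$ directly. This is arguably more robust: it does not rely on $\O$ being a ball, and so it is better suited to the generalisations the paper discusses in Section \ref{Se:Concl}. The price is that you must carry the perimeter bound for \emph{nearby} level sets yourself (as the paper does in \eqref{Eq:OS}), since your estimate $|c_{i_k}^\pm-c_{i_k}|\gtrsim\underline\mu\delta_k/\overline L$ and the subsequent integration both need $\sup_{|s-c_{i_k}|<\overline\e}\operatorname{Per}(\{\psi_{i_k}=s\})<\infty$, not just \eqref{Eq:Downtown} at $s=c_{i_k}$. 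Two minor corrections: it is the exchanging layers, not the set $\{\psi_{i_k}>c_{i_k}^+\}$ itself, that have volume $\delta_k/2$; and the coarea formula should be applied to $\psi_{i_k}$ rather than to the signed distance to $\partial\O_{i_k}$ (though the latter also works after relating the two via the transversality).
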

The proof of this Proposition is very similar to that of Proposition \ref{Pr:BathtubTD}.
\begin{proof}[Proof of Proposition \ref{Pr:Bathtub}] 
We just need to prove that, thanks to our assumption, we can bring ourselves back to the proof of Proposition \ref{Pr:BathtubTD}. This is done using the Schwarz rearrangement, as was done in \cite{MRB2020}.

From the bathtub principle we have, for any $i\in I$, that $f_i^*:=\mathds 1_{\O_i}$ is the unique solution of 
\begin{equation}\sup_{f\in \mathcal M(\O)}\int_\O f\psi_i.\end{equation}

By the uniform H\"{o}lder continuity of $\{\n \psi\}_{i\in I}$ there exists $\overline \e>0$ that only depends on $M_I$ and $\underline \mu$ such that
\begin{equation}\label{Eq:OS}\forall x \in \O\,, \forall i\in I\,, \psi_i(x)\in \Big(c_i-\overline \e;c_i+\overline \e\Big)\Rightarrow \left|\n \psi_i\right|(x)\geq \frac{\underline \mu}2\,, \sup_{\e\in (-\overline \e;\overline \e)}\sup_{i\in I}\operatorname{Per}(\{\psi_i=c_i+\e\})<+\infty.\end{equation} Let us fix such an $\overline \e$.

We now reduce ourselves to the case of radially symmetric function:
\paragraph{Reduction to radially symmetric functions}
For any $i\in I$, let us consider the distribution function $\mathscr L_i$ of $\psi_i$. From \eqref{Eq:OS}, $\mathscr L_i$ is $\mathscr C^1$ in  $(c_i-\overline \e;c_i+\overline \e)$ and so, letting $\psi_i^\#$ be the Schwarz rearrangement of $\psi_i$, we have 
$$\int_{\partial \{\psi_i>c_i\}} \frac1{\left|\frac{\partial \psi_i}{\partial \nu}\right|}=-\mathscr L_i'(c_i)=\int_{\partial \B(0;r^*)}\frac1{\left|\frac{\partial \psi_i^\#}{\partial \nu}\right|}.$$
Given the uniform perimeter bound \eqref{Eq:OS} on the level sets close to $\{\psi_i=c_i\}$, it thus follows that there exists a constant $C>0$  and $\underline \e>0$ such that $\{\psi_i^\dagger\}_{i\in I}$ satisfies, in a $(r^*-\underline \e;r^*+\underline \e)$,

\begin{equation}\label{Eq:V}\forall i\in I\,, \left|\frac{d\psi_i^\dagger}{dr}\right|\geq C\underline \mu. \end{equation} 
We can then observe the following thing: by equimeasurability of the Schwarz rearrangement, we have, for every $f\in \mathcal M(\O)$, the following property: if $\Vert f-\overline f_i\Vert_{L^1(\O)}=\delta$ then, defining $\mathbb A_\delta$ as the unique annulus such that $\operatorname{Vol}(\mathbb A_\delta)=V_0\,, \operatorname{Vol}(\mathbb A_\delta \Delta \B^*)=\delta$, the Haryd-Littlewood inequality and the equimeasurability of the rearrangement ensure that
\begin{equation}
\int_\O (\overline f_i-f)\psi_i\geq \int_\O (f^*-\mathds 1_{\mathbb A_\delta})\psi_i^\#.\end{equation} Hence, it suffices to prove that 
\begin{equation}\int_\O (f^*-\mathds 1_{\mathbb A_\delta})\psi_i^\#\geq \underline \omega \delta^2\end{equation} where $\underline \omega$ does not depend on $i$.   Thanks to \eqref{Eq:V}, the rest of the proof follows along the same exact lines as Proposition \ref{Pr:BathtubTD}.
\end{proof}

\subsection{Combining the bathtub principle and shape derivatives}
To conclude the proof of Theorem \ref{Th:Ti}, it thus only remains to prove the following proposition:

\begin{proposition}\label{Pr:Le}
Estimate \eqref{Eq:Le} holds.\end{proposition}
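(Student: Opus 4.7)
The plan is to combine the three main tools at hand: the convexity of $\mathcal J_T$ (Proposition \ref{Pr:Convexity}), the quantitative local estimate on normal deformations of $\mathbb B^*$ (Proposition \ref{Pr:NormalDeformation}), and the uniform quantitative bathtub principle (Proposition \ref{Pr:Bathtub}), following the heuristics given after \eqref{Eq:Heuristics}. For each small $\delta>0$, fix the solution $f_\delta$ of \eqref{Eq:PvDelta} given by Lemma \ref{Le:ExistenceDelta}, let $p_\delta:=p_{f_\delta}$ be its adjoint and set $\overline\Psi_\delta(x):=\int_0^T p_\delta(t,x)dt$. By the bathtub principle, for a.e. $\delta$ there is a unique $c_\delta\in\mathbb R$ such that $\overline E_\delta:=\{\overline\Psi_\delta>c_\delta\}$ has volume $V_0$, and $\overline f_\delta:=\mathds 1_{\overline E_\delta}$ is the unique maximiser in $\overline{\mathcal M}(\Omega)$ of $f\mapsto\int_\Omega f\,\overline\Psi_\delta$.

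The first step is to exploit convexity: since $\mathcal J_T$ is convex and its differential at $f_\delta$ in the direction $h$ equals $\int_\Omega h\,\overline\Psi_\delta$ (Proposition \ref{Pr:Adjoint}), one has
\begin{equation*}
\mathcal J_T(\overline f_\delta)-\mathcal J_T(f_\delta)\geq \int_\Omega(\overline f_\delta-f_\delta)\overline\Psi_\delta.
\end{equation*}
The next step is to verify that the family $\{\overline\Psi_\delta\}_{\delta\in(0;\delta_0)}$ fits the framework of Proposition \ref{Pr:Bathtub}. Since $f_\delta\to f^*$ in $L^1(\Omega)$ as $\delta\to 0$ (otherwise, passing to a weak$-*$ limit and using continuity of $\mathcal J_T$ would contradict uniqueness of $f^*$ via Theorem \ref{Th:Uniqueness}), standard parabolic Schauder estimates applied to \eqref{Eq:Main} and \eqref{Eq:Adjoint} give $u_{f_\delta}\to u^*$ and then $p_\delta\to p^*$ in $\mathscr C^{1,\alpha}(\overline{\Omega_T})$ for every $\alpha\in(0;1)$, hence $\overline\Psi_\delta\to\overline\Psi_{f^*}$ in $\mathscr C^{1,\alpha}(\overline\Omega)$. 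Using Proposition \ref{Pr:Radial} to infer $-\partial_r\overline\Psi_{f^*}(r^*)>0$, a continuity argument furnishes $\delta_0>0$ and $\underline\mu>0$ so that, uniformly in $\delta\in(0;\delta_0)$, $\overline E_\delta$ is a smooth normal graph over $\partial\mathbb B^*$, $\inf_{\partial\overline E_\delta}(-\partial_\nu\overline\Psi_\delta)\geq\underline\mu$, and $\sup\operatorname{Per}(\partial\overline E_\delta)<\infty$. Proposition \ref{Pr:Bathtub} then yields a constant $\overline\omega>0$, independent of $\delta$, such that
\begin{equation*}
\int_\Omega(\overline f_\delta-f_\delta)\overline\Psi_\delta\geq \overline\omega\,\|\overline f_\delta-f_\delta\|_{L^1(\Omega)}^2.
\end{equation*}

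The third step is the local inequality. Writing $\overline E_\delta=(\operatorname{Id}+\Phi_\delta)(\mathbb B^*)$ with $\Phi_\delta$ a compactly supported normal vector field of class $W^{2,p}$, the $\mathscr C^{1,\alpha}$ convergence $\overline\Psi_\delta\to\overline\Psi_{f^*}$ together with the implicit function theorem gives $\|\Phi_\delta\|_{W^{2,p}}\to 0$, so that Proposition \ref{Pr:NormalDeformation} applies for $\delta$ small enough and provides
\begin{equation*}
\mathcal J_T(f^*)-\mathcal J_T(\overline f_\delta)\geq C\,\operatorname{Vol}(\overline E_\delta\Delta\mathbb B^*)^2.
\end{equation*}

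Finally, summing the two inequalities gives
\begin{equation*}
\mathcal J_T(f^*)-\mathcal J_T(f_\delta)\geq C\,\operatorname{Vol}(\overline E_\delta\Delta\mathbb B^*)^2+\overline\omega\,\|\overline f_\delta-f_\delta\|_{L^1(\Omega)}^2,
\end{equation*}
and the triangle inequality $\delta=\|f_\delta-f^*\|_{L^1(\Omega)}\leq\operatorname{Vol}(\overline E_\delta\Delta\mathbb B^*)+\|\overline f_\delta-f_\delta\|_{L^1(\Omega)}$ forces at least one of the two terms on the right to be $\geq\delta/2$. Hence with $C_0:=\tfrac14\min(C,\overline\omega)$ one obtains $\mathcal J_T(f^*)-\mathcal J_T(f_\delta)\geq C_0\,\delta^2$ for all $\delta$ small enough, establishing \eqref{Eq:Le}.

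The main obstacle I expect is the uniform verification of hypothesis \eqref{Eq:Feather} of Proposition \ref{Pr:Bathtub} along the sequence $\{\overline\Psi_\delta\}$: everything hinges on upgrading the $L^1$ convergence $f_\delta\to f^*$ to $\mathscr C^{1,\alpha}$ convergence of the adjoint, which requires a careful use of parabolic Schauder estimates (via Proposition \ref{Pr:Regularity}) and the fact that the source term in \eqref{Eq:Adjoint} is $u_{f_\delta}$, itself Hölder-converging to $u^*$, rather than $f_\delta$ directly. The rest of the argument is essentially a bookkeeping step mimicking \cite{MRB2020}.
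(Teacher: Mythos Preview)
Your proof is correct and follows essentially the same route as the paper's: both use convexity to pass from $f_\delta$ to the level set $\overline E_\delta$ of the time-integrated adjoint, then apply the uniform bathtub principle (Proposition \ref{Pr:Bathtub}) and the normal-deformation estimate (Proposition \ref{Pr:NormalDeformation}), and conclude via the triangle inequality. The paper phrases this as a contradiction argument on a minimising sequence, while you give a direct estimate, but the substance is identical.

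One technical point to tighten: you write that $\mathscr C^{1,\alpha}$ convergence of $\overline\Psi_\delta$ to $\overline\Psi_{f^*}$, together with the implicit function theorem, yields $\|\Phi_\delta\|_{W^{2,p}}\to 0$. This is not quite enough: $\mathscr C^{1,\alpha}$ convergence of $\overline\Psi_\delta$ only gives $\mathscr C^{1,\alpha}$ regularity of the level sets, hence $\Phi_\delta$ small in $\mathscr C^{1,\alpha}$, not in $W^{2,p}$. To apply Proposition \ref{Pr:NormalDeformation} you need $\Phi_\delta$ small in $W^{2,p}$, which requires $\overline\Psi_\delta\to\overline\Psi_{f^*}$ in $\mathscr C^{2}$ (or $W^{2,p}$). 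This is in fact available: since $u_{f_\delta}\to u^*$ in $\mathscr C^{0,\alpha}(\overline{\Omega_T})$ by Proposition \ref{Pr:Regularity}, Schauder theory for the adjoint equation \eqref{Eq:Adjoint} gives $p_\delta\to p^*$ in $\mathscr C^{2,\beta}$ in space for each time, hence $\overline\Psi_\delta\to\overline\Psi_{f^*}$ in $\mathscr C^{2,\beta}(\overline\Omega)$, which is what the paper uses. A related minor point: the reference to Proposition \ref{Pr:Radial} for $-\partial_r\overline\Psi_{f^*}(r^*)>0$ is off, since that proposition concerns $u^*$; the monotonicity of $\overline\Psi_{f^*}$ requires the analogous argument applied to the adjoint (as in Claim \ref{Cl:NDA}).
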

\begin{proof}\label{Pr:Le}
We argue by contradiction and assume that Estimate \eqref{Eq:Le} does not hold. Let us then consider a sequence $\{f_k\}_{k\in \N}$ such that 
\begin{equation}\frac{\mathcal J_T(f^*)-\mathcal J_T(f_k)}{\Vert f_k-f^*\Vert_{L^1(\O)}^2}\underset{k\to \infty}\rightarrow 0.\end{equation}
As in the proof of Theorem \ref{Th:Td}, the only closure point of $\{f_k\}_{k\in \N}$ (in a weak $L^\infty-*$ sense) is $f^*$. We introduce, for any $k\in \N$, 
\begin{equation}\delta_k:=\Vert f_k-f^*\Vert_{L^1(\O)}.\end{equation} Up to replacing $f_k$ with $f_{\delta_k}$, we can assume that $f_k=f_{\delta_k}$.

Let us define, for any $k\in \N$, $p_k$ as the adjoint state (solution of \eqref{Eq:Adjoint}) with $f=f_k$. From standard parabolic regularity and Proposition \ref{Pr:Regularity}, for any $\beta\in(0;1)$ there exists $M_\beta$ such that for any $t\in [0;T]$
\begin{equation}
\Vert p_k\Vert_{\mathscr C^{2,\beta}(\O)}\leq M_\beta,
\end{equation}
and hence, since $f_k\underset{k\to \infty}\rightarrow f^*$, we obtain
\begin{equation}
p_k\underset{k\to \infty}{\overset{\mathscr C^{2,\beta}(\OT)}\rightarrow} p^*\end{equation}where $p^*$ is the adjoint state associated with $f=f^*$.

Let, for any $k\in \N$, 
\begin{equation}\Psi_k:=\int_0^T p_k.\end{equation} From the same arguments, 
\begin{equation}\Psi_k\underset{k\to \infty}{\overset{\mathscr C^{2,\beta}(\O)}\rightarrow}\Psi^*:=\int_0^T p^*.\end{equation} $\Psi^*$ is radially symmetric, it is decreasing and its only level set of volume $V_0$ is $\mathbb B^*$.  Furthermore, from the same arguments as in Claim \ref{Cl:NDA},  we also have 
\begin{equation}\forall \eta>0\,, \inf_{r>\eta}\left|\frac{\partial \Psi^*}{\partial r}\right|(r^*)=\ell(\eta)>0.\end{equation} Let, for any $k\in \N$, $c_k$ be such that $\operatorname{Vol}\left(\{\Psi_k\geq c_k\}\right)\geq V_0\,, \operatorname{Vol}\left(\{\Psi_k>c_k\}\right)\leq V_0$.

Since $f_k\underset{k\to \infty}\rightarrow f^*$ and since $\mathcal J_T(f_k)\geq\mathcal J_T(\mathds 1_{\{\Psi_k>c_k\}})$ by convexity of the functional, it follows that $\left\{\mathds 1_{\{\Psi_k>c_k\}}\right\}_{k\in \N}$ converges weakly to $f^*$. Since $f^*$ is an extreme point of $\overline{\mathcal M}(\O)$, this convergence occurs in $L^1$. 
We choose $\eta>0$ small enough so that, for any $k\in \N$ large enough, $\{\psi_k=c_k\}\cap \{0<r<\eta\}=\emptyset$. This is possible because $\Psi^*$ is radially symmetric and decreasing: indeed, argue by contradiction and assume that there exists a sequence $\{x_k\}_{k\in \N}$ converging to 0 such that for any $k\in \N$, $\Psi_k(x_k)=c_k$. Let $c$ be the limit of the sequence $\{c_k\}$. Since $\operatorname{Vol}(\{\Psi_k>c_k\})=V_0$, there exists $\eta'>0$ such that for any $k\in \N$ there exists $y_k\,, \Vert y_k\Vert>\eta'$ such that $\Psi_k(y_k)>c_k$. Passing to the limit, there exists $y'>0$ such that $\Psi^*(y')\geq c=\lim_{k\to \infty}\Psi_k(x_k)=\Psi^*(0)$ and so $\Psi^*$ can not be decreasing. Hence such an $\eta>0$ exists.

 As a consequence, for such an $\eta$ we have, for any $k$ large enough, 
 \begin{equation}\inf_{x\,, \Vert x\Vert>\eta}\left| \n \Psi_k\right|(x)\geq \frac{\ell(\eta)}2.\end{equation} Thus, the level set $\{\Psi_k=c_k\}$ is a $\mathscr C^1$ curve and 
 \begin{equation}\inf_{\{\Psi_k=c_k\}}|\n \Psi_k|\geq\frac{\ell(\eta)}2.\end{equation}    Since $\{\Psi_k\}_{k\in \N}$ is uniformly bounded in $\mathscr C^{2}(\O)$, these sets have uniformly Lipschitz  boundaries.
 It follows that the sequence of sets $\left\{\{\Psi_k>c_k\}\right\}_{k\in \N}$ converges in Hausdorff distance to $\{\Psi^*>c^*\}=\B^*$ where $c^*=\Psi^*(r^*)$.

 Finally, for any $k\in \N$ large enough, $E_k:=\{\Psi_k=c_k\}$ is a normal deformation of $\B^*$. Indeed, assuming that it is not, there exists a sequence $\{x_k\}_{k\in \N}\in( \partial \B^*)^\N$ and two sequences $\{t_{i,k}\}_{i=1,2,k\in \N}$ converging to 0 such that $\Psi_k(x_k+t_{i,k}\nu(x_k))=c_k$, $i=1,2$. This gives the existence of a $t_k$, converging to 0 as $k\to \infty$, such that $\langle \n \Psi_k(x_k+t_k\nu(x_k)),\nu(x_k)\rangle=0$, which yields a contradiction when passing to the limit. Thus, $\partial E_k$ converges $W^{2,p}$ to $\partial \B^*$ for all $p>1$, and in $\mathscr C^{2}$, $\beta\in (0;1)$, and the sequence $\{\operatorname{Per}(\{\Psi_k=c_k\})\}_{k\in \N}$ is bounded.

 We can now prove Estimate \eqref{Eq:Le}: from the convexity of the functional and the fact that, for any $k\in \N$,  $f_k$ solves $(P_{\delta_k})$, there exists a subset $F_k$ of $\O$ such that $f_k=\mathds 1_{F_k}$. Let $E_k=\{\Psi_k>c_k\}$ be the unique level-set of $\Psi_k$ of measure $V_0$. By convexity of the functional, 
 \begin{equation}\mathcal J_T(\mathds 1_{E_k})-\mathcal J_T(f_k)\geq \int_\O (\mathds 1_{E_k}-\mathds 1_{F_k})\Psi_k.\end{equation}
 
 We are now in a position to apply Proposition \ref{Pr:Bathtub}: with the $\overline \omega$ given in Proposition \ref{Pr:Bathtub}, we thus have 
 
\begin{equation}\mathcal J_T(\mathds 1_{E_k})-\mathcal J_T(f_k)\geq \overline \omega \operatorname{Vol}(F_k\Delta E_k)^2.\end{equation}

Then, as $E_k$ is a normal deformation of $\B^*$ we can apply Proposition \ref{Pr:NormalDeformation} and obtain, for $C>0$ given by Proposition \ref{Pr:NormalDeformation}, 
\begin{equation}\mathcal J_T(\mathds 1_{\B^*})-\mathcal J_T(\mathds 1_{E_k})\geq C\operatorname{Vol}(E_k\Delta \B^*)^2.\end{equation}
We obtain the existence a $C'>0$ such that 
\begin{equation}\mathcal J_T(\mathds 1_{\B^*})-\mathcal J_T(\mathds 1_{E_k})\geq C'\left( \operatorname{Vol}(E_k\Delta \B^*)^2+\operatorname{Vol}(F_k\Delta E_k)^2\right).\end{equation} However, by the triangle inequality in $L^1$ and the arithmetic-geometric inequality,
\begin{equation}\operatorname{Vol}(\B^*\Delta F_k)^2\leq \frac12\left(\operatorname{Vol}(F_k\Delta E_k)^2+\operatorname{Vol}(E_k\Delta \B^*)^2\right).\end{equation}The conclusion follows.

\end{proof}

\section{Conclusion}\label{Se:Concl}
\subsection{Structure of the problem, structure of the proof}
In this paper, we have investigated two possible approaches to quantitative inequalities for time-evolving optimal control problems. While Theorem \ref{Th:Td}, dealing with time-dependent controls, is more powerful than Theorem \ref{Th:Ti}, it is likely that its proof does not generalise easily to other domains. Indeed, the first step of the proof is to identify, explicitly, the maximisers of an auxiliary optimisation problem, which can not be done in general, non-spherical domains.

On the other hand, the proof of Theorem \ref{Th:Ti} is susceptible of applying to other cases. Let us specify what we mean: considering a controlled heat equation 
\begin{equation}\frac{\partial u_f}{\partial t}-\Delta u_f=f\end{equation} with Dirichlet boundary conditions, and where $f\in \overline{\mathcal M}(\O)$, let $f^*$ be a solution of \eqref{Eq:PvTi}. The convexity of the functional $\mathcal J_T$ (Proposition \ref{Pr:Convexity}) holds independently of the geometry of the domain an so any maximiser $f^*$ writes $\mathds 1_{E^*}$ for some subset $E^*$ of $\O$. In order to carry out the proof of Theorem \ref{Th:Ti} in this new domain, several things are in order:
\begin{enumerate}
\item The regularity of optimal sets: each set $E^*$ such that $f^*=\mathds 1_{E^*}$ is a solution of \eqref{Eq:PvTi} needs to be smooth enough that shape derivatives of the criterion may be computed. It is unclear at this stage whether or not the classical regularity works valid in the stationary case may be applied to obtain such regularity.
\item The coercivity of shape Lagrangians: defining $I^*:=\{E^*\subset \O\,, \mathds 1_{E^*}\text{ solves \eqref{Eq:PvTi}}\}$ and assuming that each $E^*\in I^*$ is smooth enough to compute first and second order shape derivatives, one needs to check that, defining the Lagrangian $L_{E^*}$ associated with the volume constraint, there exists a constant $\alpha>0$ such that, for any $E^*\in I^*$ and any admissible vector field $\Phi$ at $E^*$, there holds
\begin{equation}
L_{E^*}(E^*)[\Phi,\Phi]\geq \alpha \Vert \Phi \cdot \nu\Vert_{L^2(\partial E^*)}^2.\end{equation} This kind of estimate seems to be extremely challenging to obtain in general, as indicated by the fact that, in this paper, such a coercivity was obtained by explicit diagonalisation of the shape hessian. Such diagonalisation may not be available in general.
\end{enumerate}
If these two assumptions are satisfied, then we believe that the method of proof of Theorem \ref{Th:Ti} may adapt.

\subsection{The optimal coercivity norm for other types of constraints}\label{Cl:Time}
As mentioned in the Introduction, an interesting question is that of knowing whether or not the coercivity norm obtained in Theorem \ref{Th:Td} remains unchanged when considering other types of $L^1$ constraints. Indeed, let us consider the following variation: defining 
\begin{equation}\tilde{\mathcal M}(\O):=\left\{f\in L^\infty(\OT)\,, 0\leq f\leq 1\text{ a.e.,} \iint_\OT f=V_0\right\}\end{equation} we investigate the optimisation problem
\begin{equation}\sup_{f\in \tilde{\mathcal M}(\O)}J_T(f).\end{equation} Here, the convexity of the functional $\mathcal J_T$ is still valid, so that a solution of this new problem writes $f^*=\mathds 1_{E^*}$, with $E^*$ a measurable subset of $E^*$. Then, if one were to compare $f^*$ with a competitor $f=\mathds 1_E$, it would be more natural to expect the \textquotedblleft classical \textquotedblright discrepancy norm
\begin{equation}\operatorname{Vol}(E^*\Delta E)^2=\left(\iint_\OT \left|f-f^*\right|\right)^2\end{equation} to be optimal. We do not believe this to be true, however, and we believe that the correct discrepancy norm remains 
\begin{equation}\int_0^T\operatorname{Vol}(E^*(t)\Delta E(t))^2dt.\end{equation}
To give some explanation as to why we believe this is to be expected, we can once again consider the case of the ball $\O=\mathbb B(0;R)$. Once again, the rearrangement arguments used throughout the paper remain valid, and, for almost every $t\in (0;T)$, $E^*(t)$ is a centred ball of radius $r(t)\geq 0$. We expect several difficulties in treating this problem (most notably, we expect the (non)-degeneracy of $r$, or, in other terms, the control of the set $\{r=0\}$, to be very hard to obtain) but the methods of Theorem \ref{Th:Td} should once again provide a quadratic estimate at each time $t$, yielding the aforementioned stronger estimate. We underline once again that, at the present moment, it is unclear to us how one may fully analyse this type of global constraint.

\subsection{Theorem \ref{Th:Td}: on the Assumption $\e>0$}
One may also argue that the assumption $\e>0$ is artificial. At this stage, and since we use in a crucial manner the uniform non-degeneracy of the switch function (Claim \ref{Cl:NDA}), we are not yet in a position to give a proof that would bypass this assumption. However, it should be noted that our proof makes use of very strong regularity properties in order to derive the uniform bathtub principle. It would be interesting to see if, using the general quantitative Hardy-Littlewood inequality {\cite{Cianchi2008}} one could bypass the strength required in the present proof to obtain the case $\e=0$ (and, in general, it would be extremely interesting to use {\cite{Cianchi2008}} to see if Theorem \ref{Th:Td} could be obtained in more general domain).
\subsection{Using the quantitative isoperimetric inequality to obtain our results}\label{SD}

We touch on another way which it would be interesting to investigate, that of using the quantitative isoperimetric inequality in order to obtain Theorem \ref{Th:Td}. It would amount, in the approach of \cite{RakotosonMossino} (see Section \ref{Se:Uniq}), to supplementing the isoperimetric inequality in \eqref{Eq:Co}. We expect that this would lead to a control of the isoperimetric deficit $\mathcal A(t,\tau)$ of the level sets $\{u_f(t,\cdot)>\tau\}$ in the sense that we could give a lower bound of the form $\int_0^T \int_0^{\Vert u_f\Vert_{L^\infty}}\mathcal A(t,\tau)^2d\tau dt$, but is unclear how this would then translate to a control of the isoperimetric deficit of $f$.

\subsection{Minimisation problems}
We believe the proof for minimisation problems works in exactly the same way, as we also have an explicit description of minimisers using rearrangement techniques.
\subsection{Technical obstructions and possible generalisations for bilinear control problems}\label{Cl:Bilinear}

Finally, we touch upon bilinear control problems. Let us assume that we are working with the state equation 
\begin{equation}\begin{cases}
\frac{\partial u_f}{\partial t}-\Delta u_f=fu_f\text{ in }\OT\,, 
\\ u_f(t=0)=u_0\geq 0\,, u_0\neq 0\,, 
\\ u_f(t,\cdot)=0\text{ on }(0;T)\times \partial \O.\end{cases}\end{equation} The maximisation problem reads the same:
\begin{equation}\sup_{f\in \mathcal M_T(\O)}\frac12\iint_\OT u_f^2.\end{equation} Here we con once again explicitly characterise the maximisers using rearrangement techniques. However: the convexity of the functional is no longer obvious, and it can be checked that the switch function is here given 
\begin{equation}\Psi=p_fu_f\end{equation} where $p_f$ solves
\begin{equation}\begin{cases}
\frac{\partial p_f}{\partial t}+\Delta p_f=-fp_f-u_f\,, 
\\ p_f(T,\cdot)=0\,, 
\\ p_f(t,\cdot)=0\text{ on }(0;T)\times \partial \O.\end{cases}\end{equation} Here we see our first difference with our approach, which is that the switch function can merely be expected to be $\mathscr C^{0,\alpha}$, which is in contrast with the $\mathscr C^{2,\alpha}$ regularity we obtained in our paper. Maybe it is possible to bypass this problem using the tools of \cite{Cianchi2008}.

\bibliographystyle{abbrv}
\bibliography{BiblioTimeEvolving-2020}

\appendix
\section{Proof of technical lemmas}\label{Ap:Technical}

\subsection{Proof of Lemma \ref{Le:ExistenceDelta}}

\begin{proof}[Proof of Lemma \ref{Le:ExistenceDelta}]
This Lemma relies on two elements: the first one is the weak continuity of $\mathcal J_T$, given in the following claim
\begin{claim}\label{Cl:Continuity}
Assume $\{f_k\}_{k\in \N}\in \overline{\mathcal M}(\O)^\N$ converges weakly $L^\infty-*$ to $f_\infty$. Then 
\begin{equation}
\mathcal J_T(f_k)\underset{k\to \infty}\rightarrow \mathcal J_T(f_\infty).
\end{equation}
\end{claim}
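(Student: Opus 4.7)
The plan is to combine the uniform parabolic regularity from Proposition~\ref{Pr:Regularity} with a passage to the limit in the weak formulation of \eqref{Eq:Main}. First, I would invoke Proposition~\ref{Pr:Regularity} to obtain, for some fixed $\alpha\in(0;1)$, a uniform bound $\Vert u_{f_k}\Vert_{\mathscr C^{0,\alpha}(\OT)}\leq M_\alpha$. Arzelà--Ascoli then yields, up to a subsequence, the existence of $u_\infty\in \mathscr C^{0,\alpha}(\OT)$ with $u_{f_{k_j}}\to u_\infty$ uniformly on $\OT$.

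The key step is the identification $u_\infty=u_{f_\infty}$. For this I would pass to the limit in the weak formulation of \eqref{Eq:Main}: for any test function $\varphi\in \mathscr C^\infty_c([0;T)\times \O)$ vanishing on $(0;T)\times \partial\O$,
\begin{equation*}
-\iint_\OT u_{f_k}\bigl(\partial_t\varphi+\Delta\varphi\bigr)\,dt\,dx-\int_\O u^0\varphi(0,\cdot)\,dx=\iint_\OT f_k\varphi\,dt\,dx.
\end{equation*}
The left-hand side converges along the extracted subsequence thanks to the uniform convergence $u_{f_{k_j}}\to u_\infty$, while the right-hand side converges because $\varphi\in L^1(\OT)$ and $f_k\to f_\infty$ weakly in $L^\infty-*$. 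The limit $u_\infty$ therefore solves the weak formulation of \eqref{Eq:Main} with source $f_\infty$ and initial datum $u^0$, so uniqueness of solutions forces $u_\infty=u_{f_\infty}$. Because every subsequence of $\{u_{f_k}\}$ admits a further subsequence converging uniformly to the same limit $u_{f_\infty}$, the full sequence $u_{f_k}$ converges uniformly on $\OT$ to $u_{f_\infty}$.

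The conclusion follows at once: by uniform convergence combined with the uniform $L^\infty$ bound on $u_{f_k}$ (also from Proposition~\ref{Pr:Regularity}), dominated convergence gives
\begin{equation*}
\mathcal J_T(f_k)=\frac12\iint_\OT u_{f_k}^2\underset{k\to\infty}\longrightarrow\frac12\iint_\OT u_{f_\infty}^2=\mathcal J_T(f_\infty).
\end{equation*}

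The only genuinely delicate point is the passage to the limit in the source term: weak-$*$ convergence of the controls is not by itself enough to control the quadratic cost, and so strong (here, uniform) convergence of the states is needed. This is precisely where Proposition~\ref{Pr:Regularity} is essential, and I do not anticipate any further obstacle. A cosmetic alternative would be to split $u_f=U+v_f$, with $U$ solving the homogeneous heat equation with datum $u^0$ and $f\mapsto v_f$ linear, which makes the weak-$*$-to-strong transfer slightly more transparent but does not change the substance of the argument.
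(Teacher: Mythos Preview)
Your argument is correct and is precisely the ``standard parabolic estimates'' route the paper alludes to: the paper's proof consists of the single sentence ``This proof relies on standard parabolic estimates'' and gives no further detail, so your proposal is a faithful expansion of the intended argument.
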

\begin{proof}[Proof of Claim \ref{Cl:Continuity}]
This proof relies on standard parabolic estimates.
\end{proof}
The second element is the following property of the class $\overline{\mathcal M}(\O,\delta)$:
\begin{claim}\label{Cl:MdeltaCompact}
The class $\overline{\mathcal M}(\O,\delta)$ is weakly $L^\infty-*$ compact.
\end{claim}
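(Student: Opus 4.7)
The plan is to combine the Banach--Alaoglu theorem with a simple but crucial observation: because $f^*=\mathds 1_{\B^*}$ is a characteristic function, the $L^1$ distance to $f^*$ can be recast as a linear functional evaluated against an $L^1$ weight. Since the weak $L^\infty$-$*$ topology on a bounded set of $L^\infty(\O)$ does not see $L^1$ norms in general (they are only lower semicontinuous), this observation is what makes the equality constraint $\Vert f-f^*\Vert_{L^1(\O)}=\delta$ stable under weak $L^\infty$-$*$ limits.

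First, I would note that $\overline{\mathcal M}(\O,\delta)$ is contained in the closed unit ball of $L^\infty(\O)$, which is weak-$*$ compact. So any sequence $\{f_k\}_{k\in \N}\subset \overline{\mathcal M}(\O,\delta)$ admits, along a subsequence, a weak-$*$ limit $f_\infty\in L^\infty(\O)$. The pointwise bounds $0\leq f_k\leq 1$ pass to the limit because $\{g\in L^\infty(\O)\,,\,0\leq g\leq 1\text{ a.e.}\}$ is weak-$*$ closed (test against arbitrary non-negative $L^1(\O)$ functions). The volume constraint $\int_\O f_k=V_0$ also passes to the limit since $\mathds 1_\O\in L^1(\O)$. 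Thus $f_\infty\in \overline{\mathcal M}(\O)$.

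The key step is to show $\Vert f_\infty-f^*\Vert_{L^1(\O)}=\delta$. Using that $f^*=\mathds 1_{\B^*}$ and $0\leq f_k\leq 1$, I write
\begin{equation*}
\Vert f_k-f^*\Vert_{L^1(\O)}=\int_{\B^*}(1-f_k)+\int_{\O\setminus \B^*}f_k=2V_0-2\int_\O f_k\,\mathds 1_{\B^*},
\end{equation*}
where the last equality uses the volume constraint $\int_\O f_k=V_0$. Since $\mathds 1_{\B^*}\in L^1(\O)$, weak-$*$ convergence yields $\int_\O f_k\,\mathds 1_{\B^*}\to\int_\O f_\infty\,\mathds 1_{\B^*}$, hence $\Vert f_k-f^*\Vert_{L^1(\O)}\to \Vert f_\infty-f^*\Vert_{L^1(\O)}$. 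In particular the limit equals $\delta$, so $f_\infty\in \overline{\mathcal M}(\O,\delta)$ and the compactness is established.

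The only potential obstacle is precisely the equality constraint, since on bounded sets of $L^\infty$ the $L^1$ norm is generally only weakly-$*$ lower semicontinuous and the inclusion $\{g\,,\,\Vert g-f^*\Vert_{L^1(\O)}=\delta\}$ is not weak-$*$ closed in general. The identity above circumvents this, showing that the $L^1$ distance from admissible profiles to the specific competitor $f^*=\mathds 1_{\B^*}$ is in fact weak-$*$ continuous: this is a feature of $f^*$ being an extreme point of $\overline{\mathcal M}(\O)$, combined with the fixed volume constraint.
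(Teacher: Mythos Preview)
Your proof is correct and follows essentially the same approach as the paper: both exploit that, since $f^*=\mathds 1_{\B^*}$ and $0\leq f\leq 1$, the difference $f-f^*$ has a fixed sign on $\B^*$ and on its complement, so the $L^1$ distance reduces to a linear functional (you write it compactly as $2V_0-2\int_\O f\,\mathds 1_{\B^*}$, the paper splits it as $-\int_{\B^*}(f-f^*)=\int_{(\B^*)^c}(f-f^*)=\delta/2$) which then passes to weak-$*$ limits.
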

\begin{proof}[Proof of Claim \ref{Cl:MdeltaCompact}]
For any $f\in \overline{\mathcal M}(\O,\delta)$, let us define 
$$h:=f-f^*.$$ The condition that $\int_\O=\int_\O f^*$ rewrites 
\begin{equation}\label{Eq:Hzero}
\int_\O h=0.\end{equation}
Since $f^*=\mathds 1_{\B^*}$ is the characteristic function of a set, we also have 
\begin{equation}\label{Eq:H1}h\leq 0\text{ in }\mathbb B^*\,, h\geq 0\text{ in }(\B^*)^c.\end{equation} This, and Equation \eqref{Eq:Hzero}, allows to rewrite the condition $\Vert f-f^*\Vert_{L^1(\O)}$ as 
\begin{equation}\label{Eq:H2}
\int_{\B^*} |h|=-\int_{\B^*}h=\int_{(\B^*)^c} h=\int_{(\B^*)}|h|=\frac{\delta}2.\end{equation} Finally, $-1\leq h\leq 1$ as a consequence of its definition.

Let us then consider a sequence $\{f_k\}_{k\in \N}\in \overline{\mathcal M}(\O,\delta)^\N$ and define, for any $k\in \N$, 
\begin{equation}
h_k:=f_k-f^*.\end{equation}
Since $\overline{\mathcal M}(\O)$ is compact for the weak $L^\infty-*$ convergence, let us assume that there exists $f_\infty\in \overline{\mathcal M}(\O)$ such that
\begin{equation}f_k \underset{k\to \infty}\rightharpoonup f_\infty\end{equation} and define $h_\infty=f_\infty-f^*$. It is clear that 
\begin{equation} h_k \underset{k\to \infty}\rightharpoonup h_\infty.\end{equation} Since \eqref{Eq:H1}-\eqref{Eq:H2} are satisfied by $h_k$ for every $k\in \N$, it follows that they are satisfied by $h_\infty$.  As a consequence, 
\begin{equation}\int_{\B^*} |h_\infty|=-\int_{\B^*}h_\infty=\int_{(\B^*)^c} h_\infty=\int_{(\B^*)}|h_\infty\left. \right| =\frac{\delta}2\end{equation} and so 
\begin{equation}f_\infty \in \overline{\mathcal M}(\O,\delta),\end{equation} so that the Claim follows.

\end{proof}
Thus, to conclude the proof of the Lemma, it suffices to consider a minimising sequence $\{f_k\}_{k\in \N}\in \overline{\mathcal M}(\O,\delta)^\N$ for the variational problem \eqref{Eq:PvDelta}. One can extract a wek $L^\infty-*$ converging subsequence that converges to $f_\infty\in \overline{\mathcal M}(\O,\delta)$, and the Claim \ref{Cl:Continuity} enables one to pass to the limit. As a conclusion we obtain 
\begin{equation}\mathcal J_T(f_\infty)=\min_{f\in \overline{\mathcal M}(\O,\delta)}\mathcal J_T(f).\end{equation}

\end{proof}

\subsection{Proof of Lemma \ref{Le:Intermediaire}}
\begin{proof}[Proof of Lemma \ref{Le:Intermediaire}]
 First of all, the fact that Theorem \ref{Th:Ti} implies the conclusion of Lemma \ref{Le:Intermediaire} is trivial. Conversely, assume Lemma \ref{Le:Intermediaire} holds. Let us define the functional 
\begin{equation}\mathcal G_T:\overline{\mathcal M}(\O)\backslash\{f^*\}\ni f\mapsto \frac{\mathcal J_T(f^*)-\mathcal J_T(f)}{\Vert f-f^*\Vert_{L^1(\O)}^2}.\end{equation} Proving Theorem \ref{Th:Ti} is equivalent to proving 
\begin{equation}\inf_{f\in \overline{\mathcal M}(\O)}\mathcal G_T(f)>0.\end{equation} We consider a minimising sequence $\{f_k\}_{k\in \N^*}$ for $\mathcal G_T$. Let us consider a closure point $f_\infty$ of this sequence. If $f_\infty\neq f^*$ then from Claim \ref{Cl:Continuity} we have $$\lim_{k\to \infty}\mathcal J_T(f^*)-\mathcal J_T(f_k)=\mathcal J_T(f^*)-\mathcal J_T(f^\infty)=A>0$$ where the last inequality is strict because $f^*$ is the unique maximiser of $\mathcal J_T$ (Theorem \ref{Th:Uniqueness})  and so, using the trivial bound $\Vert f-f^*\Vert_{L^1(\O)}\leq 2\operatorname{Vol}(\O)$ we obtain
\begin{equation}
\lim_{k\to \infty}\mathcal G_T(f_k)\geq  \frac{\mathcal J_T(f^*)-\mathcal J_T(f_\infty)}{2\operatorname{Vol}(\O)}>0.
\end{equation}
If on the other hand we have $f_\infty=f^*$ then, $f^*$ being an extreme point of the convex set $\overline{\mathcal M}(\O)$, the convergence $f_k\underset{k\to\infty}\rightharpoonup f^*$ is strong in $L^1(\O)$ (\cite[Proposition 2.2.1]{HenrotPierre}). As a consequence we can define the sequence 
\begin{equation}
\forall k \in \N\,, \delta_k:=\Vert f_k-f^*\Vert_{L^1(\O)}.
\end{equation}
It then follows that there holds
\begin{align}
\underset{k\to \infty}{\lim\inf }\, \mathcal G_T(f_k)&\geq \underset{k\to \infty}{\lim\inf}\frac{\mathcal J_T(f^*)-\mathcal J_T(f_{\delta_k})}{\delta_k^2}
\\&>0
\end{align} if Estimate \eqref{Eq:Le} holds, and this concludes the proof of the equivalence between the two results.
\end{proof}


\subsection{Proof of the coercivity estimate-Lemma \ref{Le:Control}}\label{Ap:Control}

 \begin{proof}[Proof of Lemma \ref{Le:Control}] To alleviate the proof, we first note that such a continuity is standard to prove for the term $C(\operatorname{Vol}(\B_{t\Phi}^*)-V_0)^2$ and we hence omit it.
Let us then define the function $\overline j_\Phi:=\mathcal L_{\B^*}$ and prove this estimate for this term. First of all, standard computations show that $\overline j_\Phi$ is twice differentiable in the sense of shapes. Furthermore, the second order shape derivatives at a given shape $E$ such that $E\cap \partial \O=\emptyset$, in the direction $\Phi$, where $\Phi\in W^{2,p}(\O;\R^2)$ is compactly supported in $\O$, is given by 
\begin{equation}\mathcal L_{\B^*}''(E)[\Phi,\Phi]=\iint_{(0;T)\times \partial E}p'\left(\Phi\cdot \nu\right)+\int_{\partial E}\left(\Phi\cdot \nu\right)^2\left(\mathscr H \int_{0}^T p_E+\int_0^T\frac{\partial p_E}{\partial \nu}\right)-\overline \Psi_{\partial \B^*}\int_{\partial E}\mathscr H \left(\Phi\cdot \nu\right)^2\end{equation} where:
\begin{enumerate}
\item $\mathscr H$ is the mean curvature of $\partial E$,
\item $p_E$ solves
\begin{equation}\begin{cases}
\frac{\partial p_E}{\partial t}+\Delta p_E=-u_E\text{ in }\OT\,, 
\\ p_E(T,\cdot)=0\,, 
\\ p_E(t,\cdot)=0\text{ on }(0;T)\times \partial \O,
\end{cases}
\end{equation}

\item $u'$ solves 
\begin{equation}\begin{cases}
\frac{\partial u'}{\partial t}-\Delta u'=0\text{ in }\OT\,, 
\\ u'(0,\cdot)=0\,, 
\\\llbracket \partial_\nu u'\rrbracket=-1\text{ on }(0;T)\times \partial E,
\\ u'(t,\cdot)=0\text{ on }(0;T)\times \partial \O,
\end{cases}
\end{equation}
\item and $p'$ solves

\begin{equation}\begin{cases}
\frac{\partial p'}{\partial t}+\Delta p'=-u'\text{ in }\OT\,, 
\\ p'(T,\cdot)=0\,, 
\\ p'(t,\cdot)=0\text{ on }(0;T)\times \partial \O,
\end{cases}
\end{equation}
\item and 
$$\left.\overline \Psi_{\partial \B^*}=\int_0^T p^*(t,\cdot)\right|_{\partial \B^*}$$ is the Lagrange multiplier associated with the volume constraint.
\end{enumerate}

Let us now assume that $E=\B^*_{\tau\Phi}$ for a fixed compactly supported vector field $\Phi \in W^{2,p}$ normal to $\partial \B^*$, and for some $\tau\in (0;1)$. We first use a change of variables: let us define

\begin{align*}T_\tau:=Id+\tau\Phi\,, J_{\Sigma,\tau}(\Phi):=\det(\n T_\tau)\left| (^T\n T_\tau^{-1})\nu\right|\,,\\ J_{\O,\tau}:=\det(\n \tau\Phi)\,, A_\tau:=J_{\O,\tau}(\Phi)(Id+\tau\n \Phi)^{-1}(Id+\tau^T\n \Phi)^{-1},\end{align*} $u_\tau:=u_{\B^*_{\tau\Phi}}$ and  $\hat u_\tau:=u_{\B^*_{\tau\Phi}}\circ T_\tau$. By a change of variable, we see that $\hat u_\tau$ satisfies 
\begin{equation}\begin{cases}
J_{\O,\tau}\frac{\partial \hat u_\tau}{\partial \tau}-\nabla \cdot(A_\tau \n \hat u_\tau)=J_{\O,\tau}f^*\text{ in }\OT\,, 
\\ \hat u_\tau(t,\cdot)=0\text{ on }(0;T)\times \partial \O\,, 
\\ \hat u_\tau(0,\cdot)=0,
\end{cases}
\end{equation}
while the function $\hat u'_{\B^*_{\tau\Phi},\Phi}:=u'_{E,\tau\Phi}\circ T_\tau$ which we abbreviate as $\hat u'_\tau$, satisfies

\begin{equation}\label{Sureau}\begin{cases}J_{\O,\tau}\frac{\partial \hat u_\tau'}{\partial t}-\nabla \cdot\left(A_\tau\n \hat u_\tau'\right)=0\text{ in }\OT\,,\\ \llbracket A_\tau \frac{\partial \hat u_\tau'}{\partial \nu}\rrbracket_{\partial \B^*}=-J_{\Sigma,\tau}\left(\Phi\cdot\nu\right)\,, 
\\ \hat u_\tau'=0\text{ on }\partial \O.\end{cases}\end{equation}

With the same notation, $\hat p_\tau\,, \hat p_\tau'$ satisfy
\begin{equation}\begin{cases}
J_{\O,\tau}\frac{\partial \hat p_\tau}{\partial \tau}+\nabla \cdot(A_\tau \n \hat p_\tau)=-J_{\O,\tau}\hat u_\tau\text{ in }\OT\,, 
\\ \hat p_\tau(t,\cdot)=0\text{ on }(0;T)\times \partial \O\,, 
\\ \hat p_\tau(T,\cdot)=0,
\end{cases}
\end{equation}
and
\begin{equation}\label{Sureau}\begin{cases}J_{\O,\tau}\frac{\partial \hat p_\tau'}{\partial t}+\nabla \cdot\left(A_\tau\n \hat p_\tau'\right)=-J_{\O,\tau}\hat u_\tau'\text{ in }\OT\,
\\ \hat p_\tau'=0\text{ on }\partial \O\,, \\ \hat p_\tau'(T,\cdot)=0.\end{cases}\end{equation}
 Finally, we set $\hat{\mathscr H}_\tau:=\mathscr H_{\tau}\circ T_\tau,$ $\mathscr H_\tau$ being the mean curvature of $\B^*_{\tau\Phi}$.

Using these notations, the difference which has to be controlled is hence
\begin{align}
\mathcal L_{\B^*}''(\B^*_{\tau \Phi})[\Phi,\Phi]-\mathcal L_{\B^*}''(\B^*)[\Phi,\Phi]&=\label{R1}\tag{$\bold{R_1}(\tau,\Phi)$} \iint_{(0;T)\times \partial \B^*}\left(\Phi\cdot \nu\right)\left\{J_{\Sigma,\tau}\hat p_\tau'-p_{\B^*}'\right\}
\\&\label{R2}\tag{$\bold{R_2}(\tau,\Phi)$}+\iint_{(0;T)\times \partial \B^*}J_{\Sigma,\tau}\hat{\mathscr H}_\tau\left( \hat p_\tau-p_{\B^*}\right)\left(\Phi\cdot \nu\right)^2
\\&\label{R3}\tag{$\bold{R_3}(\tau,\Phi)$}+\iint_{(0;T)\times \partial \B^*} \left(J_{\Sigma,\tau}\frac{\partial \hat p_\tau}{\partial \nu}-\frac{\partial p_{\B^*}}{\partial \nu}\right)\left(\Phi\cdot \nu\right)^2.
\end{align}

We will control each of these three terms separately and we first recall several geometric estimates from \cite{DambrineLamboley}.

\begin{proposition}[Geometric estimates,{ \cite[Lemma 4.8]{DambrineLamboley}}]\label{Pr:Geom}
For any $p\in (1;+\infty)$, for any $\Phi\in \mathcal X_1(\B^*)\cap W^{2,p}(\O;\R^2)\cap W^{1,\infty}(\O;\R^2)$ such that $\Vert \Phi\Vert_{W^{1,\infty}}\leq M_0$ fixed, there exists a constant $M_p$  such that, for any $\tau\in (0;1)$:
\begin{itemize}
\item \begin{equation}\label{Eq:Jac}\Vert \hat J_{\Sigma,\tau}-1\Vert_{L^\infty(\partial \B^*)}\leq M_p\Vert \Phi\cdot\nu\Vert_{W^{1,\infty}(\partial \B^*)}.\end{equation}
\item \begin{equation}\label{Eq:Curv}\Vert \hat{\mathscr H}_\tau-\mathscr H_{\B^*}\Vert_{L^p(\partial \B^*)}\leq M_p \Vert \Phi \cdot\nu\Vert_{W^{2,p}(\partial \B^*)}.\end{equation}
\item \begin{equation}\Vert A_\tau-Id\Vert_{L^\infty}+\Vert A_\tau-Id\Vert_{W^{1,p}(\O)}\leq M_P \Vert \Phi \Vert_{W^{2,p}( \B^*)}
\end{equation}
\end{itemize}
\end{proposition}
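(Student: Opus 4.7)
The plan is to derive all three estimates by expanding each quantity as a smooth perturbation of its value at $\tau=0$ (where $T_0=\mathrm{Id}$), using the fact that $\nabla T_\tau = \mathrm{Id} + \tau\nabla\Phi$ is uniformly invertible whenever $\tau\|\nabla\Phi\|_{L^\infty}<1$, and then carefully tracking which derivatives of $\Phi$ appear at each order.

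First I would handle the matrix estimate for $A_\tau$, which is the algebraically simplest. The determinant $J_{\Omega,\tau}=\det(\mathrm{Id}+\tau\nabla\Phi)$ is a polynomial of degree $n$ in the entries of $\tau\nabla\Phi$, so $J_{\Omega,\tau}-1 = \tau\,\mathrm{div}(\Phi) + \tau^2 R_1(\tau,\nabla\Phi)$ with $R_1$ smooth. The inverse admits a convergent Neumann series
\begin{equation*}
(\mathrm{Id}+\tau\nabla\Phi)^{-1}=\sum_{k\geq 0}(-\tau\nabla\Phi)^k,
\end{equation*}
from which $\|(\nabla T_\tau)^{-1}-\mathrm{Id}\|_{L^\infty}\lesssim\|\nabla\Phi\|_{L^\infty}$ and, after differentiating the series termwise, $\|(\nabla T_\tau)^{-1}-\mathrm{Id}\|_{W^{1,p}}\lesssim\|\Phi\|_{W^{2,p}}$. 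Writing $A_\tau=J_{\Omega,\tau}(\nabla T_\tau)^{-1}({}^T\nabla T_\tau)^{-1}$ and expanding the product yields the third estimate.

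For the tangential Jacobian $J_{\Sigma,\tau}=\det(\nabla T_\tau)\bigl|{}^T(\nabla T_\tau)^{-1}\nu\bigr|$, the same pointwise expansions apply along $\partial\mathbb{B}^*$. At first order in $\tau$ one finds $J_{\Sigma,\tau}-1 = \tau\,\mathrm{div}_{\partial\mathbb{B}^*}(\Phi)+O(\tau^2)$, which at first sight involves the full tangential gradient of $\Phi$. The key point is that tangential components of $\Phi$ along $\partial\mathbb{B}^*$ only reparametrise the surface and do not affect the intrinsic measure it induces; after decomposing $\Phi=(\Phi\cdot\nu)\nu+\Phi_\tau$ and using $\mathrm{div}_{\partial\mathbb{B}^*}(\Phi_\tau)$ to absorb tangential contributions by a change of variable on $\partial\mathbb{B}^*$, the remainder is controlled by $\|\Phi\cdot\nu\|_{W^{1,\infty}(\partial\mathbb{B}^*)}$, giving the first estimate.

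The mean-curvature estimate is the most delicate. Using the pull-back formula
\begin{equation*}
\hat{\mathscr H}_\tau = \mathrm{div}_{\partial\mathbb{B}^*}\!\left(\frac{{}^T(\nabla T_\tau)^{-1}\nu}{|{}^T(\nabla T_\tau)^{-1}\nu|}\right),
\end{equation*}
the previous Neumann expansion shows that the argument of $\mathrm{div}_{\partial\mathbb{B}^*}$ depends smoothly on $\nabla\Phi|_{\partial\mathbb{B}^*}$, so the divergence introduces one additional tangential derivative. Reducing again to the normal component (since tangential pieces of $\Phi$ merely reparametrise $\partial\mathbb{B}^*$ and leave $\mathscr H$ invariant), one obtains $\hat{\mathscr H}_\tau-\mathscr H_{\mathbb{B}^*}$ as a sum of terms of the form $D^2_{\partial\mathbb{B}^*}(\Phi\cdot\nu)$ times smooth factors, plus a higher-order remainder. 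The $L^p(\partial\mathbb{B}^*)$ bound then follows directly and matches the trace of $W^{2,p}(\Omega)$. The main obstacle throughout is precisely this normal-tangential decomposition: keeping rigorous track of how tangential components of $\Phi$ disappear in the final intrinsic quantities, so that the right-hand sides involve only $\Phi\cdot\nu$, rather than the full jet of $\Phi$ on $\partial\mathbb{B}^*$.
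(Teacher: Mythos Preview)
The paper does not prove this proposition: it is quoted verbatim from \cite[Lemma 4.8]{DambrineLamboley} and used as a black box, so there is no ``paper's own proof'' to compare against.

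Your sketch is a reasonable outline of the standard argument, and the Neumann-series expansion for $A_\tau$ is exactly how the third estimate is obtained. One point deserves care, however. Your reduction to the normal component $\Phi\cdot\nu$ for the first two estimates, via the claim that ``tangential components only reparametrise the surface'', is not a valid step for \emph{pointwise} ($L^\infty$ or $L^p$) bounds: the tangential Jacobian $J_{\Sigma,\tau}(x)$ at a given point genuinely depends on how the tangential part of $\Phi$ stretches $\partial\mathbb B^*$ near $x$, and a change of variable on $\partial\mathbb B^*$ can only remove this dependence after integration, not pointwise. The correct resolution in the present setting is simpler: the paper works throughout under Assumption \eqref{As:Normal}, so $\Phi|_{\partial\mathbb B^*}=(\Phi\cdot\nu)\,\nu$ already, and no normal--tangential decomposition is needed. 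With that assumption in hand, your expansions of $J_{\Sigma,\tau}$ and of the pulled-back mean curvature go through directly and yield the stated bounds in terms of $\Phi\cdot\nu$.
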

 \paragraph{Control of \eqref{R2}-\eqref{R3}} Our goal is to obtain the existence of a constant $M>0$ and of a modulus of continuity $\eta$ such that 
 \begin{equation}\vert\bold{R_2}(\tau,\Phi)\vert+\vert\bold{R_3}(\tau,\Phi)|\leq M \eta(\Vert \Phi\Vert_{W^{2,p}(\O)})\Vert \Phi\cdot \nu \Vert_{L^2(\partial \B^*)}^2.\end{equation} From Proposition \ref{Pr:Geom} and standard Schauder estimates, such an estimate follows if there exists a modulus of continuity $\eta$ such that, for some $\beta\in (0;1)$, 
 \begin{equation}\label{Eq:Three}
 \Vert u_\tau-u^*\Vert_{\mathscr C^{0,\beta}(\OT)}\leq M \eta(\Vert\Phi\Vert_{W^{2,p}}).
 \end{equation}
 In turn, using Proposition \ref{Pr:Geom} and the H\"{o}lder continuity of $u_\tau$ (Proposition \ref{Pr:Regularity}), \eqref{Eq:Three} is implied by the following: there exist a constant $M>0$ and a modulus of continuity $\eta$ such that 
 \begin{equation}\label{Eq:Barbey}
 \Vert \hat u_\tau-u^*\Vert_{\mathscr C^{0,\beta}(\OT)}\leq M \eta\left(\Vert \Phi\Vert_{W^{2,p}}\right).
 \end{equation}
 \begin{proof}[Proof of \eqref{Eq:Barbey}]
 Straightforward computations show that $z_\tau:=\hat u_\tau-u^*$ solves
 \begin{equation}
 \begin{cases}
J_{\O,\tau} \frac{\partial z_\tau}{\partial \tau}-\nabla \cdot(A_\tau \n z_\tau)=f^*(J_{\O,\tau}-1)+\nabla \cdot((A_\tau-1)\n u^*)+(J_{\O,\tau}-1)\frac{\partial u_{\B^*}}{\partial t}\text{ in }\OT\,, 
 \\ z_\tau(t,\cdot)=0\text{ on }(0;T)\times \partial \O\,, 
 \\ z_\tau(0,\cdot)=0.
 \end{cases}
 \end{equation}
 Standard $L^p$ estimates imply that for any $p\in (1;+\infty)$ there exists a constant $M_p$ such that
 \begin{equation}
 \Vert z_\tau\Vert_{W^{1,p}(\OT)}\leq M_p\left(\Vert \Phi\Vert_{W^{2,p}}+\Vert \Phi\Vert_{W^{1,\infty}}\right)
 \end{equation}
 so that Sobolev embeddings conclude the proof.
 \end{proof}
 
 \paragraph{Control of \eqref{R1}}
 To control this term it suffices to show that there exists a constant $M_p$ such that 
 \begin{equation}\left \Vert \int_0^T \hat p_\tau'-p_{\B^*}'\right\Vert_{L^2(\partial \B^*)}\leq M_p \Vert \Phi\Vert_{W^{2,p}}\Vert \Phi \cdot \nu \Vert_{L^2(\partial \B^*)}.\end{equation} By the continuity of the trace it follows that it suffices to prove that 
 \begin{equation}\label{Eq:Ez3}\int_0^T \Vert \hat p_\tau'-p_{\B^*}'\Vert_{W^{1,2}(\O)}^2\leq M_p \Vert \Phi\Vert_{W^{2,p}}^2\Vert \Phi \cdot \nu \Vert_{L^2(\partial \B^*)}^2.\end{equation}
Let us define $z_\tau':=\hat p_\tau'-p_{\B^*}'$.  Straightforward computations show that
\begin{equation}
\begin{cases}
J_{\O,\tau}\frac{\partial z_\tau'}{\partial t}+\nabla \cdot(A_\tau \n z_\tau')=-\hat u_\tau'+u_{\B^*}'-\n \cdot((A_\tau-1)\n p_{\B^*}')+(J_{\O,\tau}-1)\frac{\partial p_{\B^*}'}{\partial t}\,, 
\\ z_\tau'(T,\cdot)=0\,, 
\\ z_\tau'(t,\cdot)=0\text{ on }(0;T)\times \O
\end{cases}
\end{equation}
and so \eqref{Eq:Ez3} follows from standard $W^{1,2}$ estimates if we can prove that 
\begin{equation}\label{Eq:R}\int_0^T  \left\Vert \frac{\partial p_{\B^*}'}{\partial t}\right\Vert_{L^2(\O)}^2+||\n p_{\B^*}'||_{L^2(\O)}^2\leq M_p\Vert \Phi \cdot \nu \Vert_{L^2(\partial \B^*)}^2\end{equation} and that
\begin{equation}\label{Eq:R2}\int_0^T \Vert u_{\B^*}'-\hat u_\tau'\Vert_{L^2(\O)}^2\leq M_p  \eta(\Vert \Phi\Vert_{W^{2,p}})^2\Vert \Phi \cdot \nu \Vert_{L^2(\partial \B^*)}^2\end{equation} for some constant $M_p$. To prove these two inequalities, we begin with a first estimate: 
\begin{equation}\label{Eq:GS}
\iint_\OT \left(\frac{\partial u_{\B^*}'}{\partial t}\right)^2+\iint_\OT (u_{\B^*}')^2\leq M \Vert \Phi\cdot \nu \Vert_{L^2(\partial \B^*)}^2.\end{equation}
\begin{proof}[Proof of \eqref{Eq:GS}]
For the sake of readability, we abbreviate $u_{\B^*}'$ as $u'$ here. Multiplying the equation on $u'$ by $u'$ and integrating by parts, we obtain 
\begin{equation}
\int_\O (u')^2(T,\cdot)+\iint_\OT |\n u'|^2=\iint_{(0;T)\times \partial \B^*}\left(\Phi\cdot\nu\right)u'\leq \Vert \Phi\cdot \nu\Vert_{L^2(\partial \B^*)}\Vert u'(t,\cdot)\Vert_{L^2((0.T)\times\partial \B^*)}.
\end{equation}
By continuity of the trace and by the Poincar\'e inequality,
 we obtain 
\begin{multline}
\iint_\OT |\n u'|^2\leq \int_\O (u')^2(T,\cdot)+\iint_\OT |\n u'|^2
\leq  M \Vert \Phi \cdot \nu \Vert_{L^2(\partial \B^*)}\Vert \n u'\Vert_{L^2(\OT)}.
\end{multline}
This first gives 

\begin{equation}\Vert \n u'\Vert_{L^2(\OT)}\leq   M \Vert \Phi \cdot \nu \Vert_{L^2(\partial \B^*)}\end{equation} which in turn implies
\begin{equation}\iint_\OT (u')^2\leq M T    \Vert \Phi \cdot \nu \Vert_{L^2(\partial \B^*)}^2.\end{equation} 

To obtain the estimates on $\iint_\OT \left(\frac{\partial u_{\B^*}'}{\partial t}\right)^2$ we proceed as follows: using $\frac{\partial u_{\B^*}'}{\partial t}$ as a test function we obtain 
\begin{align}
\iint_\OT \left(\frac{\partial u_{\B^*}'}{\partial t}\right)^2+\int_\O |\n u_{\B^*}'|^2(T,\cdot)&=\iint_{(0;T)\times \partial \B^*}\left(\Phi\cdot \nu\right)\frac{\partial u_{\B^*}'}{\partial t}
\\&=\int_{\partial \B^*}\left(\Phi\cdot \nu\right)(u_\B')(T,\cdot)
\\&\leq M\Vert \Phi\cdot \nu\Vert _{L^2(\partial \B^*)}\Vert \n u_{\B^*}'\Vert_{L^2(\O)}(T,\cdot),
\end{align}
which gives the conclusion: indeed, we apply Young's inequality $2ab\leq \e a^2+\frac{b^2}\e$ to the right hand side and conclude.

\end{proof}

Let us then turn to \eqref{Eq:R}

\begin{proof}[Proof of \eqref{Eq:R}]
Let us define $q'(t,\cdot):=p_{\B^*}'(T-t,\cdot)$. Then $q'$ satisfies, with $u'=u_{\B^*}'$,
\begin{equation}\begin{cases}
\frac{\partial q'}{\partial t}-\Delta q'=u'(T-t,\cdot)\text{ in }\OT\,, 
\\ q'(0,\cdot)=0\,, 
\\ q'(t,\cdot)=0\text{ on }(0;T)\times \partial \O,
\end{cases}
\end{equation}
so that by standard parabolic estimates for the heat equation we obtain, for some constant $M$
\begin{equation}\iint_\OT \left(\frac{\partial p_{\B^*}'}{\partial t}\right)^2=\iint_\OT\left(\frac{\partial q'}{\partial t}\right)^2\leq MT\iint_\OT (u')^2\leq MT \Vert \Phi\cdot \nu \Vert_{L^2(\partial \B^*)}^2.\end{equation} 

\end{proof}
Finally, let us prove \eqref{Eq:R2}.
\begin{proof}[Proof of \eqref{Eq:R2}]

Let us recall that the weak formulation of the equations on $\hut$ and on $u':=u_{\B^*}'$ are: for any test function $v$, 
\begin{equation}\iint_\OT J_{\O,\tau}\frac{\partial \hut}{\partial t}v+\iint_\OT \langle A_\tau \n \hut,\n v\rangle=\iint_{(0;T)\times \partial \B^*}J_{\Sigma,\tau}\left(\Phi\cdot \nu \right) v
\end{equation}
and 
\begin{equation}
\iint_\OT \frac{\partial u'}{\partial t}v+\iint_\OT \langle \n u',\n v\rangle=\iint_{(0;T)\times \partial \B^*} \left(\Phi\cdot \nu\right)v.
\end{equation}
Substracting these two weak formulations and setting $w_\tau':=\hat u_\tau'-u_{\B^*}'$ we obtain, on $w_\tau'$, the weak formulation 
\begin{multline}
\iint_\OT J_{\O,\tau} \frac{\partial w_\tau'}{\partial t}v+\iint_\OT \langle A_\tau \n w_{\tau}',\n v\rangle=\iint_{(0;T)\times \partial \B^*}(J_{\Sigma,\tau}-1)v+\iint_\OT (J_{\O,\tau}-1)\frac{\partial u'}{\partial t}v
\\+\iint_\OT\langle (A_\tau-Id)\n u',\n v\rangle.
\end{multline}
Using $v=w_\tau'$ as a test function  we obtain in the same way, using Poincar\'e inequality and the continuity of the trace for any $t$, up to a multiplicative constant that does not depend on $\Phi$,
\begin{align}
\int_\O (w_\tau')^2(t,\cdot)+\iint_{(0;t)\times \O} |\n w_\tau'|^2&\leq \Vert J_{\Sigma,\tau}-1\Vert_{L^2(\partial \B)}\Vert \n w_\tau'\Vert_{L^2((0;t)\times \O)}(t)
\\&+\Vert J_{\O,\tau}-1\Vert_{L^\infty}\left\Vert \frac{\partial u'}{\partial t}\right\Vert_{L^2((0;t)\times \O)}\Vert \n w_\tau'\Vert_{L^2((0;t)\times \O)}
\\&+\Vert A_\tau-1\Vert_{L^\infty}\left\Vert \n u'\right\Vert_{L^2((0;t)\times \O)}\left\Vert \n w_\tau'\right\Vert_{L^2((0;t)\times \O)}.
\end{align}
The conclusion then follows.
\end{proof}

\end{proof}

\newpage
\textsc{Idriss Mazari}
\\Technische Universit\"{a}t Wien, Institute of Analysis and Scientific Computing, 8-10 Wiedner Haupstrasse, 1040 Wien (\texttt{idriss.mazari@tuwien.ac.at})

\end{document}